\documentclass[a4paper]{article}

\usepackage[T1]{fontenc}
\usepackage{t1enc}

\usepackage{amsmath, amssymb, amsfonts,amsthm}
\usepackage{graphicx}
\usepackage{setspace}
\usepackage{commath}
\usepackage{pgf,tikz}
\usetikzlibrary{arrows}
\usepackage[hyphens]{url}
\usepackage{hyperref}
\usepackage{color}
\usepackage{setspace}
\usepackage{cite}
\usepackage{enumitem}
\usepackage{comment}
\usepackage{mathtools}
\usepackage{etoolbox}
\usepackage{quiver}
\usepackage{accents}
\usepackage[nottoc,numbib]{tocbibind}
\usepackage{tikz-cd}
\usepackage[normalem]{ulem}

\def\R{\mathbb{R}}

\def\N{\mathbb{N}}
\def\M{\mathbb{M}}

\def\A{\mathbb{A}}

\def\B{\mathbb{B}}

\DeclareMathOperator{\inter}{int}
\DeclareMathOperator{\stem}{stem}
\DeclareMathOperator{\dom}{dom}
\DeclareMathOperator{\add}{add}
\DeclareMathOperator{\cof}{cof}
\DeclareMathOperator{\cov}{cov}
\DeclareMathOperator{\non}{non}

\DeclareMathOperator{\rakovetkezo}{succ}

\newcommand{\bt}{\begin{theorem}}
\newcommand{\et}{\end{theorem}}
\newcommand{\bp}{\begin{proof}}
\newcommand{\ep}{\end{proof}}
\newcommand{\bd}{\begin{definition}}
\newcommand{\ed}{\end{definition}}
\newcommand{\bl}{\begin{lemma}}
\newcommand{\el}{\end{lemma}}
\newcommand{\bfa}{\begin{fact}}
\newcommand{\efa}{\end{fact}}

\newcommand{\s}{\subseteq}
\newcommand{\perf}{X_\textsl{perf}}
\newcommand{\ctbl}{X_\textsl{ctbl}}

\newcommand{\card}{\{\lambda^\omega \mid \lambda \textrm{ is a cardinal} \}\textrm{-universally Baire}}
\newcommand{\cardinf}{\{\lambda^\omega \mid \lambda \textrm{ is an infinite cardinal} \}\textrm{-universally Baire} }
\newcommand{\cech}{\textrm{\v{C}ech-complete-universally Baire}}
\newcommand{\topsp}{\textrm{topological-universally Baire}}
\newcommand{\n}{\{\mathbb{N}^\omega\}\textrm{-universally Baire}}
\newcommand{\p}{\textrm{Polish-universally Baire}}
\newcommand{\ch}{\textrm{compact Hausdorff-universally Baire}}
\newcommand{\compact}{\textrm{compact-universally Baire}}
\newcommand{\cantor}{\{2^\omega\} \textrm{-universally Baire}}
\newcommand{\cm}{\textrm{compact metrizable-universally Baire}}
\newcommand{\hausdorff}{\textrm{Hausdorff-universally Baire}}
\newcommand{\regular}{\{Y \mid Y \textrm{ has a regular open basis}\}\textrm{-universally Baire}}
\newcommand{\bs}{\mathbb{N}^\omega}

\newcommand{\br}{\begin{remark}}
\newcommand{\er}{\end{remark}}

\newcommand{\cat}{\mathbb{C} \mathbb{A} \mathbb{T}}

\newcommand{\customsection}[3]{%
  \refstepcounter{section}
  \sectionmark{#3}
  \addcontentsline{toc}{section}{\protect\numberline{\thesection}#2}
  \section*{\thesection\quad #1}
}

\definecolor{LinkSzin}{HTML}{000060}
\hypersetup{
  pdfborder = {5 3 1},
  colorlinks = true,
  linkcolor = LinkSzin,
  urlcolor = LinkSzin,
  citecolor = LinkSzin
}

\numberwithin{equation}{section}

\newtheorem{theorem}{Theorem}[section]
\newtheorem{lemma}[theorem]{Lemma}
\newtheorem{definition}[theorem]{Definition}

\newtheorem{ex}[theorem]{Example}
\newtheorem{claim}[theorem]{Claim}
\newtheorem{corollary}[theorem]{Corollary}
\newtheorem{question}[theorem]{Question}
\newtheorem{remark}[theorem]{Remark}
\newtheorem{fact}[theorem]{Fact}

\newtheorem*{diagram1}{Theorem~\ref{thm:diagram1}}

\newtheorem*{diagram2}{Theorem~\ref{thm:diagram2}}

\newtheorem*{compact=polish}{Theorem~\ref{t:equiv}}

\newtheorem*{nowherereflection}{Theorem~\ref{thm:non-Baire}}

\title{On various notions of universally Baire sets and reflection of non-Baire property in Cantor sets}

\author{M\'arton Elekes, M\'at\'e P\'alfy}

\begin{document}

\maketitle

\bgroup
\renewcommand{\thefootnote}{}%
\footnotetext{\emph{2010 Mathematics Subject Classification.} Primary 03E15; Secondary 03E35, 03E50, 03-02.}
\footnotetext{\emph{Key words and phrases.} universally Baire, regularity property, consistency, independence, Continuum Hypothesis, Boolean algebra, finer topology, forcing.}
\footnotetext{Both authors were supported by the National Research, Development and Innovation Office -- NKFIH, grants no.~146922,~124749,~129211,~152822. The second author was also supported by HAS Momentum Grant 2022-58.}
\egroup

\begin{abstract}
Universally Baire sets are a standard tool in modern set theory, where they are closely connected with forcing and absoluteness. In this paper we study universal Baireness instead as purely a regularity notion for subsets of Polish spaces. 
Our aim is not to develop the set-theoretic theory, but to place the notion on a systematic descriptive-set-theoretic footing and to clarify the several nonequivalent definitions that occur in the literature.

A subset of a Polish space is called \emph{universally Baire} if all its continuous preimages possess the property of Baire. However, it is important to specify what the domain of these continuous functions can be. In fact, a plethora of definitions are used by various authors in the literature, the class of possible domains is sometimes the class of Polish spaces, compact metrizable spaces, all topological spaces, compact topological spaces, etc. 

We clarify this situation by demonstrating that the above four options are pairwise nonequivalent under the Continuum Hypothesis, we determine all the implications between these four notions that are provable in $ZFC$, and also show that all the notions used in the literature are equivalent to one of these four.

Furthermore, we use Miller forcing to show that actually the cases of Polish and compact metrizable spaces are consistently equivalent. 

If the underlying Polish space (in which we consider whether a subset is universally Baire) is compact or even $\sigma$-compact, then instead of the above four essentially different notions there are only two, since it does not make any difference whether we consider Polish or compact metrizable spaces as domains, and similarly whether we consider arbitrary topological or compact topological spaces as domains.

We also classify all versions of the above definitions obtained by replacing the continuous preimages by Borel or Baire preimages.

We show that numerous definitions using Borel isomorphisms, changes of topologies and Boolean algebras are all equivalent to the case of Polish domains.

We present an argument that the notion using Polish domains is the natural dual to the notion of universally measurable sets.

Finally, as a byproduct, we prove that the following statement is independent of $ZFC$: if a subset of a Polish space has the property of Baire relative to every nowhere dense Cantor subspace, then the subset has the property of Baire in the whole space.
\end{abstract}

\begin{spacing}{1}
\setcounter{tocdepth}{2}
\tableofcontents
\end{spacing}

\section{Introduction}

\bigskip
Universally Baire sets have played a central role in modern set theory for several decades, especially their connections with forcing and generic absoluteness. At the same time, universal Baireness can be viewed purely as a regularity property of subsets of Polish spaces. The present paper adopts this latter, descriptive-set-theoretic point of view. 

Curiously, there is no consensus in the literature what the definition of a universally Baire set should be. In fact, a plethora of definitions are used by various authors, many of them (at least consistently) nonequivalent.


 The main aim of this paper is to clarify this situation. All the definitions that we have encountered in the literature have proven to be equivalent to one of the following form.

\begin{definition}
\label{def:univBaire}
Let $\mathcal{F}$ be a class of topological spaces, $X$ a Polish space and $A \subseteq X$. The set $A$ is called \emph{$\mathcal{F}$-universally Baire} if $f^{-1}(A)$ has the property of Baire in $Y$ for every $Y \in \mathcal{F}$  and for every continuous function $f: Y \longrightarrow X$.
\end{definition}

It is easy to see that $\mathcal{F}$-universally Baire subsets of $X$ are well-behaved for every $\mathcal{F}$, for example they form a $\sigma$-algebra that contains the Borel sets. Also, since the sets with the property of Baire are closed under the Suslin operation, $\mathcal{F}$-universally Baire sets are also closed under the Suslin operation, in particular $\mathcal{F}$-universally Baire sets contain the analytic sets. Another important property is that they are closed under continuous preimages between Polish spaces, that is, if $Y$ is Polish, $f:Y \longrightarrow X$ is a continuous function and $A\s X $ is $\mathcal{F}$-universally Baire, then $f^{-1}(A)$ is $\mathcal{F}$-universally Baire in $Y$. 

It should be mentioned that the very first definition of universal Baireness that we are aware of is due to J. P. R. Christensen \cite[pg. 24]{Chr} from 1974. His notion was formulated in slightly different terms, but it is in fact equivalent to $\mathcal{F}$-universally Baire with the choice when $\mathcal{F}$ is the class of all topological spaces (for this equivalence see Theorem \ref{Borel preimages theorem} below). In 1992 in their influential paper Feng-Magidor-Woodin \cite{FMW} found fundamental connections between this descriptive set theoretic notion and pure set theory. Namely, they have shown that certain absoluteness results (with respect to forcing) hold if and only if all members in certain projective pointclasses are $\mathcal{F}$-universally Baire with the choice when $\mathcal{F}$ is the class of all topological spaces again. For more on this connection see Section \ref{s:def} and \cite{FMW}.

Since 1974, numerous classes $\mathcal{F}$ have been used in the literature, ranging from compact metric spaces through \v{C}ech-complete spaces to spaces of the form $\lambda^\omega$, where $\lambda$ is an arbitrary infinite cardinal equipped with the discrete topology, just to list a few. It should be pointed out that the case of Polish domains will turn out to be equivalent to a notion often called $\omega$-universally Baire or $\aleph_0$-universally Baire in the set theory literature. For a more concise list of options see 
\cite{Chr, Fremlin, FMW, Woodin, Jab, Matheron, Re, Muller, Ikegami, Ikegami2, 
Steel, Neeman, Sar, 
Welch, FriSchi, LarShe, KetchLarZap, Khomsky, Wilson, Far, Tod, diPris, Lucke}.

Our first main result below will clarify this situation. We will show that there are four of these definitions such that all the known definitions are equivalent to one of these four, and under the Continuum Hypothesis (CH) these four definitions are pairwise nonequivalent.

It is important to point out that the next two main theorems (Theorems \ref{thm:diagram1} and \ref{thm:diagram2}) sum up numerous results, and many of these are not new. This part of the paper should be considered as a survey, invoking several results by Feng-Magidor-Woodin, Fremlin and Matheron-Solecki-Zelen\'y from \cite{Fremlin, FMW, Matheron}.

From now on, for specific cases of the class $\mathcal{F}$, e.g. for the classes of Hausdorff or metrizable spaces, we will freely use the terms \emph{Hausdorff-universally Baire} and \emph{metrizable-universally Baire}. 
But there will be four distinguished classes, which we highlight in the next definition.

\begin{definition}
A subset $A$ of a Polish space $X$ is called \emph{topological-universally Baire, Polish-universally Baire, compact-universally Baire, compact metrizable-universally Baire}, respectively, if it is $\mathcal{F}$-universally Baire with $\mathcal{F}$ equal to the class of all topological spaces, Polish spaces, compact topological spaces and compact metrizable spaces, respectively.
\end{definition}

\begin{diagram1}
For a subset of a Polish space, the following implications hold:

    \begin{tikzcd}[row sep = 1cm, column sep = -2 cm]
        & & \fbox{
\begin{tabular}{c} topological-universally Baire \\
$\Updownarrow$
\\
Hausdorff-universally Baire \\
$\Updownarrow$
\\
$\{Y \mid Y \text{ has a regular open basis} \}$-universally Baire\\
$\Updownarrow$
\\ \text{ \v{C}ech-complete-universally Baire} \\
$\Updownarrow$
\\
metrizable-universally Baire\\
$\Updownarrow$
\\
completely metrizable-universally Baire\\
$\Updownarrow$

\\ $\{ \lambda^\omega \mid \lambda \text{ is a cardinal} \}$-universally Baire \\
$\Updownarrow$
\\ $\{ \lambda^\omega \mid \lambda \text{ is an infinite cardinal} \}$-universally Baire \\
$\Updownarrow$
\\ $\{ Y \mid Y \text{ is an ordered space} \}$-universally Baire 
\end{tabular}} \arrow[Rightarrow, dr] \arrow[Rightarrow, dl] & \\
        & \fbox{ \begin{tabular}{c} Polish-universally Baire \\
$\Updownarrow$\\
$\{ \mathbb{N^\omega} \}$-universally Baire \end{tabular} }\arrow[Rightarrow, dr]
        & &  \fbox{\begin{tabular}{c}
        compact-universally Baire \\
        $\Updownarrow$
\\
        compact Hausdorff-universally Baire \end{tabular}} \arrow[Rightarrow, dl] \\
        & & \fbox{\begin{tabular}{c}
        compact metrizable-universally Baire \\
$\Updownarrow$
\\
        $\{ 2^\omega \}$-universally Baire
        \end{tabular}}  &
    \end{tikzcd} 

\smallskip
Moreover, under the Continuum Hypothesis the four notions in the above diagram are pairwise nonequivalent.
\end{diagram1} 

\newpage

Our second main result will show that the situation simplifies if the underlying Polish space is compact, or even $\sigma$-compact.

\begin{diagram2}
For a subset of a \emph{$\sigma$-compact} Polish space, the following implications hold:

\begin{center}
    \begin{tikzcd}[row sep = 0cm, column sep = 2cm]
     \fbox{
\begin{tabular}{c} topological-universally Baire \\
$\Updownarrow$
\\
Hausdorff-universally Baire \\
$\Updownarrow$
\\
$\{Y \mid Y \text{ has a regular open basis} \}$-universally Baire\\
$\Updownarrow$
\\ \text{ \v{C}ech-complete-universally Baire} \\
$\Updownarrow$
\\
metrizable-universally Baire\\
$\Updownarrow$
\\
completely metrizable-universally Baire\\
$\Updownarrow$

\\ $\{ \lambda^\omega \mid \lambda \text{ is a cardinal} \}$-universally Baire \\
$\Updownarrow$
\\ $\{ \lambda^\omega \mid \lambda \text{ is an infinite cardinal} \}$-universally Baire \\
$\Updownarrow$

\\ $\{ Y \mid Y \text{ is an ordered space} \}$-universally Baire \\
$\Updownarrow$
\\ compact-universally Baire\\
$\Updownarrow$
\\ compact Hausdorff-universally Baire \end{tabular}}
 \\
\Bigg\Downarrow
\\
\fbox{\begin{tabular}{c} Polish-universally Baire \\
$\Updownarrow$
\\ $\{ \mathbb{N^\omega} \}$-universally Baire \\
$\Updownarrow$
\\  compact metrizable-universally Baire \\
$\Updownarrow$
\\ $\{ 2^\omega \}$-universally Baire \end{tabular}}
    \end{tikzcd}
\end{center}

\smallskip
Moreover, under the Continuum Hypothesis the two notions in the above diagram are nonequivalent.
\end{diagram2}

It is a very natural question whether the consistent examples showing that the implications above cannot be reversed can be replaced by $ZFC$ examples. In most cases these are very intriguing open questions, but in one case we know that this is impossible.

\begin{compact=polish}
It is consistent with $ZFC$, since it holds in the Miller model, that a subset of a Polish space is Polish-universally Baire if and only if it is compact metrizable-universally Baire.
\end{compact=polish}

The key tool in the proof of the above theorem is the following new reflection principle that is interesting in its own right.

\begin{nowherereflection}

The following statement is consistent with $ZFC$, since it holds in the Miller model: If $X$ is a Polish space and a subset $A \subseteq X$ does not have the property of Baire, then there exists a nowhere dense Cantor set $C \subseteq X$ such that $A \cap C$ does not have the property of Baire in $C$.
\end{nowherereflection}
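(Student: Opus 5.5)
The plan is to reduce the statement to a statement about sets of size $\aleph_1$, and then use a reflection argument along the countable support iteration of Miller forcing of length $\omega_2$. The reflection step relies on two facts about the Miller model: $\non(\mathcal M)=\aleph_1$ holds there, and Miller forcing and its countable support iterations preserve non-meagerness of ground model sets (they preserve $\sqsubseteq^{\mathrm{Cohen}}$). The key new ingredient is a single-step lemma about one Miller real, which I state below and which is the main obstacle.

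\emph{Step 1 (localization).} Suppose $A\s X$ does not have the property of Baire. Then there is a nonempty open $U\s X$ such that both $A\cap U$ and $U\setminus A$ are nonmeager in every nonempty open subset of $U$. Shrinking $U$, we may assume $U$ is a zero-dimensional perfect Polish set, for instance a copy of $\bs$ inside a basic open set, after removing a meager set if needed. It then suffices to find a nowhere dense Cantor set $C\s U$ such that $A\cap C$ and $C\setminus A$ are both nonmeager in every relatively open subset of $C$. Such a $C$ is also nowhere dense in $X$.

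\emph{Step 2 (reduction to $\aleph_1$-sized sets).} Fix a countable base $\{V_n\}$ of $U$. Every nonmeager set contains a nonmeager subset of size $\non(\mathcal M)=\aleph_1$. So we can pick $B\s A\cap U$ and $B'\s U\setminus A$, both of size $\aleph_1$, with $B\cap V_n$ and $B'\cap V_n$ nonmeager for all $n$. By monotonicity it is enough to find a nowhere dense Cantor $C$ in which $B\cap C$ and $B'\cap C$ are everywhere nonmeager. Since $|B\cup B'|=\aleph_1$ and the iteration is proper of length $\omega_2$ with countable support, $B\cup B'\in V[G_\alpha]$ for some $\alpha<\omega_2$. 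By the preservation theorem, $B$ and $B'$ are already everywhere nonmeager in $V[G_\alpha]$: a meager $F_\sigma$ covering a piece of them in $V[G_\alpha]$ would stay such in the final model.

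\emph{Step 3 (the key lemma).} The lemma I need is the following. Let $M$ be a model and let $B,B'\in M$ be everywhere nonmeager in $U$. Then the Miller real $m$ over $M$ codes, by a fixed Borel recipe applied to the superperfect generic tree, a nowhere dense Cantor set $C_m\s U$ such that, in $M[m]$, both $B\cap C_m$ and $B'\cap C_m$ are everywhere nonmeager in $C_m$.

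I would build $C_m$ as the image of $2^\omega$ under a map $\varphi_m$ whose values on finite splitting levels are dictated by the splitting nodes of the generic tree. The aim is that, for any $\mathbb{M}$-name of a meager $F_\sigma$ subset of a basic clopen part of $C_m$, a fusion argument produces a condition and a ground model point of $B$ (respectively $B'$), chosen via everywhere non-meagerness of $B$ in $M$, that is forced to lie in $C_m$ and outside the name. This is the step I expect to be hardest. The difficulty is to arrange simultaneously that $C_m$ passes through enough ground model points of $B$ and $B'$, and that $C_m$ stays nowhere dense. My first attempt would be to let $\varphi_m$ on each splitting node choose among infinitely many ground-model candidates from $B\cap V_n$ and $B'\cap V_n$. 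The infinite branching of superperfect trees is exactly what allows the fusion to "hit" a point outside any prescribed meager set.

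\emph{Step 4 (preservation to the end).} Apply the lemma with $M=V[G_\alpha]$ and the Miller real added at stage $\alpha$. The statement "$B\cap C_m$ is nonmeager in $C_m\cap W$" for each basic relatively open $W$ is a non-meagerness statement inside a fixed Polish space $C_m\cong 2^\omega$. So it is preserved by the tail of the iteration, again by the preservation of non-meager sets (transported along the homeomorphism $\varphi_m$). Hence in the final model $C=C_m$ witnesses the theorem.
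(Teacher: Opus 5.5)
Your architecture is the same as the paper's. You localize to an open set on which both $A$ and its complement are everywhere nonmeager, reflect to an intermediate model, use one Miller real to get a nowhere dense Cantor set in which ground-model everywhere nonmeager sets stay everywhere nonmeager, and push this to the final model because the tail iteration preserves category. Steps 1 and 4 are fine.

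\textbf{Step 3 is the gap.} It is the heart of the proof, it is not proved, and the mechanism you sketch would not work. A fusion argument in $M$ builds a single real by meeting countably many requirements, and nothing steers that real into $B$, which is an arbitrary, non-definable nonmeager set. Placing ground-model candidates from $B\cap V_n$ at splitting nodes only puts countably many prescribed points of $B$ into $C_m$, and a countable set is meager in $C_m$. Nowhere denseness is not an obstacle: every compact subset of $\mathbb{N}^\omega$ is nowhere dense. The paper simply takes $C_m=\prod_i[m(i)+1]$, which does not depend on $B$ and so handles $B$ and $B'$ at once.

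The missing idea is to argue by contradiction from the ground model. Suppose $p^\star$ forces that $B\cap C_m\cap[s]$ is meager in $C_m\cap[s]$. This is witnessed by names $\dot U_n$ of decreasing open sets shrinking to $C_m$, whose traces are dense in $C_m\cap[s]$, with $B\cap\bigcap_n\dot U_n\cap[s]=\emptyset$. Consider the ground-model set
$R_p=\{r:\exists q\le p\ (q\Vdash r\in\bigcap_n\dot U_n\cap[s])\}$.
If $R_p$ is comeager in some nonempty open set, then everywhere nonmeagerness of $B$ in $M$ gives some $r\in B\cap R_p$. The witnessing $q\le p^\star$ then forces $B\cap\bigcap_n\dot U_n\cap[s]\neq\emptyset$, a contradiction.

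The paper gets this comeagerness from two lemmas:
\begin{itemize}
\item A fusion shows $R_p$ is nonmeager for every $p\le p^\star$. It thins out trees so that $r(i)\le m(i)$ is forced, and uses infinite branching to revisit each branching node.
\item For densely many $p$, $R_p$ is analytic, hence has the Baire property. The paper uses conditions that decide every statement $[t]\subseteq\dot U_n$ along a barrier, which makes $R_p$ the projection of a Borel set.
\end{itemize}
Since that fusion copes with arbitrary extensions supplied by an opponent, it can also be read as a winning strategy for player II in the Banach--Mazur game on a suitable basic open set. That reading would give comeagerness directly.

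\textbf{Step 2 has an unjustified claim.} $\non(\mathcal{M})=\aleph_1$ only gives some nonmeager set of size $\aleph_1$; it does not give a nonmeager $\aleph_1$-sized subset of a prescribed nonmeager set. In the Miller model the statement you need is true, but its proof is exactly the paper's reflection lemma. Since CH holds in every $V_\alpha$, each stage codes only $\aleph_1$ meager Borel sets. Using the $\aleph_2$-c.c., build a continuous increasing sequence $\langle\alpha_\gamma:\gamma<\omega_1\rangle$ such that, in each basic open set, the part of each set named by stage $\alpha_{\gamma+1}$ is not covered by any meager Borel set coded in $V_{\alpha_\gamma}$. By properness, every real of $V_{\alpha^\star}$, where $\alpha^\star=\sup_\gamma\alpha_\gamma$, already lies in some $V_{\alpha_\gamma}$. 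Hence these traces are everywhere nonmeager in $V_{\alpha^\star}$.

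This replaces your Step 2. You do not even need the traces to be nonmeager in the final model, since at the end you only use that the trace intersected with $C$ is contained in $A\cap C$ (and similarly for the complement).
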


\begin{remark}
We remark that this reflection property is in fact independent of $ZFC$. Indeed, a so called Luzin set (see the next section for a definition) is a set without the property of Baire but it is countable in every nowhere dense subspace.
\end{remark}

After clarifying the relations between the versions of this definition, our next goal will be to identify the version that is the best dual notion to universal measurability. (Recall that a set in a Polish space is called \emph{universally measurable} if it is measurable with respect to the completion of every Borel probability measure.) These dualities have been prevalent in the field, starting from meager sets and nullsets, through sets with the property of Baire and measurable sets, to universally meager sets and universally null sets. In Section \ref{s:argument} we argue that the most natural choice for the definition of a universally Baire set should have been Polish-universally Baire. Most probably this is too late to be changed, since the definition has been coined for topological-universally Baire for decades in set theory texts, but still we would like to point out that Polish-universally Baire is the natural dual notion to universal measurability.

Finally, in Section \ref{s:open} we collect the open problems.

\section{Notations and preliminaries}

Most of the following definitions and facts can be found in \cite{Ke}.

A topological space $X$ is called a \emph{Polish space}, if it is separable and completely metrizable. When $\lambda$ is a cardinal, we also consider it as a topological space with the discrete topology and when we write $\lambda^\omega$, we consider $\lambda^\omega$ as a topological space equipped with the product topology. In the case $\lambda= \omega$, we will write $\bs$, which is called the \emph{Baire space} and the topological space $2^\omega$ is called the \emph{Cantor space}. (The reason why we use both $\N$ and $\omega$ is that we consider $\N$ as a discrete topological space and $\omega$ as a set without any further structure.) The Baire space and the Cantor space are Polish spaces.
A subspace of a Polish space is Polish if and only if it is a $G_\delta$ subspace.

We will denote the family of Borel sets in a topological space $(Y, \tau)$ by $\mathcal{B}(Y, \tau)$. When there is no danger of confusion, since there is only one topology involved, we will abbreviate $\mathcal{B}(Y, \tau)$ by $\mathcal{B}(Y)$. Similarly, if $A \subseteq Y$ is a subspace then we will sometimes also abbreviate $\mathcal{B}(A, \tau \vert_A)$ by $\mathcal{B}(A)$.

Let $Y$ be a topological space, then $A \s Y$ is called a \emph{nowhere dense set}, if for every nonempty open set $U \s Y$ there exists a nonempty open set $V \s U$, such that $V \cap A = \emptyset$. The set $A \s Y$ is called a \emph{meager set}, if it is the countable union of nowhere dense sets. The set $B \s Y$ is called \emph{comeager}, if $B^c$ is meager. For a topological space $(Y,\tau)$ we denote the meager subsets of $Y$ by $\mathcal{M}(Y, \tau)$. We will sometimes abbreviate $\mathcal{M}(Y,\tau)$ by $\mathcal{M}(Y)$.

For sets $A,B$ we denote by $A \triangle B$ the \emph{symmetric difference} of $A$ and $B$, that is $A \triangle B= (A \setminus B) \cup (B \setminus A)$. Let $Y$ be a topological space, a set $A$ has the \emph{property of Baire} if there exists an open set $U$ such that $U \triangle A$ is meager. The family of sets that have the property of Baire forms a $\sigma$-algebra, and it is the smallest $\sigma$-algebra containing all the open sets and all the meager sets. Moreover, every set with the property of Baire can be written as the union of a Borel set and a meager set. We will denote the family of sets with the property of Baire in a topological space $(Y, \tau)$ by $\mathcal{BP}(Y, \tau)$. We will sometimes abbreviate $\mathcal{BP}(Y, \tau)$ by $\mathcal{BP}(Y)$.

A function $f:Y \longrightarrow Z$ is \emph{Baire measurable} if whenever $U \s Z$ is open then $f^{-1}(U)$ has the property of Baire. We will also use the Baire category theorem, that is, if $Y$ is a complete metric space, then the countable intersection of dense open sets is dense.

A topological space $Y$ is said to be \emph{perfect}, if it has no isolated points. A subset of a topological space is called a \emph{perfect subset} (or simply perfect) if it is closed and perfect with respect to the subspace topology. If $X$ is a Polish space, then $X$ can be uniquely written as $X = X_\textsl{perf} \cup X_\textsl{ctbl}$, where $\perf$ and $\ctbl$ are disjoint and $\perf$ is a perfect subset of $X$ and $\ctbl$ is a countable open set. Later, we will use the following well-known lemma (see \cite[Proposition 2.1]{Miller}):

\begin{lemma}
\label{perfectcontbairelemma}
Every perfect Polish space contains a dense $G_\delta$ subspace homeomorphic to $\mathbb{N}^\omega$.
\end{lemma}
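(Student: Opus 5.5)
The plan is to remove a countable dense set of the space and then apply the Alexandrov--Urysohn characterization of $\mathbb{N}^\omega$. Let $X$ be a nonempty perfect Polish space; the empty case is trivial and we ignore it. Fix a compatible complete metric $d$ and a countable dense set $D \subseteq X$.

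First we check that $X \setminus D$ is a dense $G_\delta$ subspace. It is $G_\delta$ because $D$ is countable, hence $F_\sigma$. It is dense because $X$ has no isolated points, so each singleton is closed and nowhere dense. Then $D$ is meager, and by the Baire category theorem $X\setminus D$ is comeager and therefore dense. Being $G_\delta$ in a Polish space, $X \setminus D$ is itself Polish.

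Next we verify that $Z = X \setminus D$ is nonempty and zero-dimensional, and that every compact subset of $Z$ has empty interior in $Z$.

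\textbf{Nonempty.} Otherwise $X = D$ would be countable, and a countable perfect Polish space contradicts the Baire category theorem.

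\textbf{Zero-dimensional.} Take $x\in Z$. The spheres $\{y : d(x,y)=r\}$ for $r>0$ are pairwise disjoint, so only countably many of them meet the countable set $D$. Choose radii $r\to 0$ avoiding those, so that the sphere of radius $r$ misses $D$. Then $B(x,r)\cap Z$ equals $\{y : d(x,y)\le r\}\cap Z$, which is clopen in $Z$.

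\textbf{Compact sets have empty interior.} Suppose $K\subseteq Z$ is compact and contains a nonempty relatively open set $U\cap Z$, where $U$ is open in $X$. Since $Z$ is dense, $U \subseteq \overline{U\cap Z}\subseteq K$. But $U$ is nonempty open and $D$ is dense, so $U$ meets $D$, which is disjoint from $K$. This is a contradiction.

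Finally, the Alexandrov--Urysohn theorem (see Kechris, Theorem 7.7) says that a nonempty Polish zero-dimensional space in which every compact subset has empty interior is homeomorphic to $\mathbb{N}^\omega$. Hence $Z$ is a dense $G_\delta$ subspace of $X$ homeomorphic to $\mathbb{N}^\omega$.

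The only step that is not routine is zero-dimensionality. Removing a countable dense set is exactly what makes it work, via the choice of radii whose spheres avoid $D$. Everything else is a direct use of the Baire category theorem and the cited characterization.
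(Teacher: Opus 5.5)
Your zero-dimensionality step fails, and the problem is not only in the write-up. If the sphere $S=\{y : d(x,y)=r\}$ misses $D$, then $S\subseteq Z$. Hence $\{y : d(x,y)\le r\}\cap Z=(B(x,r)\cap Z)\cup S$, which is strictly larger than $B(x,r)\cap Z$ whenever $S\neq\emptyset$. What you would actually need is $S\cap Z=\emptyset$, i.e.\ $S\subseteq D$. A sphere is typically uncountable, so it cannot lie inside a countable set. (Choosing radii whose spheres avoid a countable set is the argument that \emph{countable} metric spaces are zero-dimensional, because there the sphere can be made empty.)

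In fact, removing a countable dense set does not yield a zero-dimensional space at all. Take $X=\mathbb{R}^2$ and $D=\mathbb{Q}^2$. The circular arcs joining two given points of $\mathbb{R}^2\setminus\mathbb{Q}^2$ meet pairwise only in their endpoints, so all but countably many of them avoid $D$. Hence $\mathbb{R}^2\setminus\mathbb{Q}^2$ is path-connected, and certainly not homeomorphic to $\mathbb{N}^\omega$.

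The missing idea is that zero-dimensionality must also be forced by discarding a meager set, but that set has to contain the boundaries of a countable base, not just a countable dense set. Fix a countable base $\{B_n\}_{n\in\omega}$ of $X$. Each $\partial B_n$ is closed and nowhere dense, so by the Baire category theorem
\[
Z' := X\setminus \Bigl(D\cup\bigcup_n \partial B_n\Bigr)
\]
is a dense $G_\delta$. Since $Z'$ misses every $\partial B_n$, we have $B_n\cap Z'=\overline{B_n}\cap Z'$, which is clopen in $Z'$. Thus $\{B_n\cap Z'\}_n$ is a clopen base for $Z'$.

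Your remaining arguments then go through verbatim for $Z'$. These are nonemptiness, the fact that compact subsets have empty interior (because $D$ is dense in $X$ and disjoint from $Z'$), and the appeal to Alexandrov--Urysohn (Kechris, Theorem 7.7).

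Alternatively, you can build the copy of $\mathbb{N}^\omega$ directly by a Lusin scheme of nonempty open sets. At each node, take infinitely many pairwise disjoint open subsets with dense union, closures contained in the parent, and diameters tending to $0$; perfectness guarantees the infinite branching. The paper itself gives no proof of this lemma and simply cites Burke--Miller, Proposition 2.1.
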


Let $X$ be a Polish space. The set $A \s X$ is called \emph{analytic} if there exists a Polish space $Y$ and a continuous $f:Y \longrightarrow X$ such that $f(Y)=A$. If $A \s X$ is an image of a Borel subset of a Polish space under a Borel map, then $A$ is analytic. The set $A \s X$ is called \emph{coanalytic} if $A^c$ is analytic.

Let $A$ be a nonempty set and $n \in \omega$.
The length of a finite sequence $s$ is denoted by $length(s)$.  If $s \in A^n$ and $m \leq n$, we denote by $s \vert_m$ the restriction of $s$ of length $m$, that is, if $s=(s_0,...,s_{n-1})$ then $s \vert_m=(s_0,...,s_{m-1})$. If $a \in A$ and $s \in A^n$, then $s^\smallfrown a$ denotes the sequence $(s_0,...,s_{n-1},a)$. We say that the finite sequence $t$ extends the finite sequence $s$ if there exists a $m \leq length(t)$ such that $t \vert_m =s$, in symbols $s \prec t$. We will denote by $A^{< \omega}$ the set of all finite sequences from $A$, that is, $A^{< \omega}= \bigcup\limits_{n \in \omega} A^n$. For $s,t \in A^{<\omega}$ we denote by $s \curlywedge t$ their longest common predecessor, that is $r=s \curlywedge t$ if and only if $r=s\vert_{\abs{r}}=t\vert_{\abs{r}}$ and $s(\abs{r}) \neq t(\abs{r})$ (or one of the two is undefined).

A \emph{tree} $T$ on a nonempty set $A$ is a subset $T \s A^{<\omega}$ such that $T$ is closed under initial segments, that is, if $t \in T$ and $s \prec t$, then $s \in T$. If $T$ is a tree on a set $A$ and $s \in A ^{<\omega}$, then $T[s]$ denotes the following tree:
$$
T[s]:= \{t \in T \vert t \prec s \textrm{ or } s \prec t\}.
$$

\underline{Convention}: Whenever we consider a topological space $Z$, we do \emph{not} assume that $Z$ is Hausdorff and we will carefully emphasize when the space $Z$ is assumed to be Hausdorff.

We will denote the \emph{interior, closure} and \emph{boundary} of a set $A$ by $\inter A$, $\overline{A}$ and $\partial A$, respectively.

Let $Z$ be a topological space and let $U \s Z$ be an open set, then $U$ is called a \emph{regular open set}, if $\inter \overline{U}=U$. We will say that $Z$ has a regular open basis, if it has a basis such that every element of the basis is a regular open set. If $A \s Z$ has the property of Baire, then there exists a regular open set $U$ and a meager set $M$ such that $A= U \triangle M$.

A topological space $Z$ is called \emph{\v{C}ech-complete}, if $Z$ is homeomorphic to a $G_\delta$ subset of a compact Hausdorff space.

A set $S \s [0,1]$ is called a \emph{Sierpi\'nski set} if it is uncountable, but has countable intersection with every Lebesgue null set, and $L \s \N^\omega$ is called a \emph{Luzin set} if it is uncountable, but has countable intersection with every meager set. Also note that both Sierpi\'nski and Luzin sets exist under the Continuum Hypothesis.


\customsection{A brief historical overview of the variants of the definition}{A brief historical overview of the variants of the definition}{A brief historical overview}\label{s:def}

Before turning to the list of published variants of the definition of universal Baireness, we need to warn the reader that the literature concerning this notion is rather chaotic.
Let us mention just a few peculiarities. First and foremost, there are roughly a dozen definitions, many of which are consistently nonequivalent. Moreover, as of the writing of this paper, Wikipedia (see \cite{Wik}) lists three definitions, claiming that they are equivalent, but in reality two of them are actually nonequivalent (at least under the Continuum Hypothesis).






Now, the goal of this section is to collect the definitions of universal Baireness that can be found in the literature.

As we have already mentioned above, for the very first such definition the credit should be given to Christensen.
Back in 1974 in \cite[pg. 24]{Chr} he defined a set $A \subseteq X$ to be universally Baire if $f^{-1}(A)$ has the property of Baire in $Y$ for every Hausdorff space $Y$  and for every Borel function $f: Y \longrightarrow X$.
As we will see in Theorem \ref{Borel preimages theorem} below, this is in fact equivalent to topological-universal Baireness. 

Then his definition was apparently forgotten for almost two decades. The next step was when Feng-Magidor-Woodin  reintroduced an equivalent notion in various different forms in 1992. Using our terminology, they showed that the following definitions are equivalent for a set $A \subseteq \mathbb{N}^\omega$ \cite[Cor. 2.1]{FMW}:

\begin{itemize}
\item 
$A$ is topological-universally Baire, 
\item 
$A$ is $\{ \lambda^\omega \mid \lambda \text{ is an infinite cardinal} \}$-universally Baire, 
\item
for every forcing notion $\mathbb{P}$ there are trees $T$ and $S$ such that $\mathrm{pr}(T) = A$, $\mathrm{pr}(S) = \mathbb{N}^\omega \setminus A$ and  $\mathbb{P} \Vdash \mathrm{pr}(T) \cup \mathrm{pr}(S) = \mathbb{N}^\omega$,
\end{itemize}
where $\lambda$ is considered as a discrete space and 
$\lambda^\omega$ is equipped with the product topology.



The special case of $\{ \mathbb{N}^\omega \}$-universally Baire sets was also introduced in \cite{FMW}, and it has generated substantial interest since then. The original terminology was $\omega$-universally Baire, or $\aleph_0$-universally Baire. It is included in our main theorem that this is equivalent to Polish-universal Baireness.

Rec\l aw in \cite{Re} introduced the following definition in 2000. A set $A \subseteq X$ is universally Baire if $f^{-1}(A)$ has the property of Baire in $\mathbb{R}$ for every Borel isomorphism $f: \mathbb{R} \longrightarrow X$. It will follow from Theorem \ref{thm: boole equivalence} that this is in fact also equivalent to Polish-universal Baireness.

Woodin in \cite{Woodin} uses compact Hausdorff-universal Baireness in 2002. While this notion turns out to be (consistently) nonequivalent to the Feng-Magidor-Woodin definition, in this paper only $\sigma$-compact underlying spaces are considered, where the above two notions coincide.

Matheron, Solecki and Zelen\'y in \cite{Matheron} introduced the class of Polish-universally Baire sets in 2006.

Fremlin in \cite{Fremlin} worked with the notion of 
\v{C}ech-complete-universal Baireness. This will also be included in our diagram, the main result will show that this is equivalent to topological-universal Baireness.


And finally, Jab{\l}o\'nska in \cite{Jab} applied compact metrizable-universally Baire sets in 2021.

\section{Universally Baire sets in arbitrary Polish spa\-ces}\label{univbairesection1}

The main result of the present section is the following theorem:

\bt
\label{thm:diagram1}
For a subset of a Polish space, the following implications hold:

    \begin{tikzcd}[row sep = 1cm, column sep = -2 cm]
        & & \fbox{
\begin{tabular}{c} topological-universally Baire \\
$\Updownarrow$
\\
Hausdorff-universally Baire \\
$\Updownarrow$
\\
$\{Y \mid Y \text{ has a regular open basis} \}$-universally Baire\\
$\Updownarrow$
\\ \text{ \v{C}ech-complete-universally Baire} \\
$\Updownarrow$
\\
metrizable-universally Baire\\
$\Updownarrow$
\\
completely metrizable-universally Baire\\
$\Updownarrow$

\\ $\{ \lambda^\omega \mid \lambda \text{ is a cardinal} \}$-universally Baire \\
$\Updownarrow$
\\ $\{ \lambda^\omega \mid \lambda \text{ is an infinite cardinal} \}$-universally Baire \\
$\Updownarrow$

\\ $\{ Y \mid Y \text{ is an ordered space} \}$-universally Baire
\end{tabular}} \arrow[Rightarrow, dr] \arrow[Rightarrow, dl] & \\
        & \fbox{ \begin{tabular}{c} Polish-universally Baire \\
$\Updownarrow$\\
$\{ \mathbb{N^\omega} \}$-universally Baire \end{tabular} }\arrow[Rightarrow, dr]
        & &  \fbox{\begin{tabular}{c}
        compact-universally Baire \\
        $\Updownarrow$
\\
        compact Hausdorff-universally Baire \end{tabular}} \arrow[Rightarrow, dl] \\
        & & \fbox{\begin{tabular}{c}
        compact metrizable-universally Baire \\
$\Updownarrow$
\\
        $\{ 2^\omega \}$-universally Baire
        \end{tabular}}  &
    \end{tikzcd} 

\smallskip
Moreover, under the Continuum Hypothesis the four notions in the above diagram are pairwise nonequivalent.
\et


\subsection{Auxiliary lemmas}

\begin{lemma}
\label{bővebbenuniveakkorkisebbenis}
Let $X$  and $\tilde{X}$ be arbitrary Polish spaces, $A \s \tilde{X} \s X$, and $\mathcal{F}$ be a class of topological spaces. If $A$ is $\mathcal{F}$-universally Baire in $X$, then $A$ is also $\mathcal{F}$-universally Baire in $\tilde{X}$.
\end{lemma}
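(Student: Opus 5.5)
Let $\iota:\tilde{X}\to X$ be the inclusion map. Since $\tilde{X}$ carries the subspace topology, $\iota$ is continuous. The plan is to show that every continuous test map into $\tilde X$ becomes a continuous test map into $X$ after composing with $\iota$, and that taking preimages of $A$ is unaffected by this composition. Nothing deeper than the definition of $\mathcal{F}$-universal Baireness (Definition~\ref{def:univBaire}) should be needed.

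Fix $Y\in\mathcal{F}$ and a continuous $f:Y\to\tilde{X}$. We must show that $f^{-1}(A)$ has the property of Baire in $Y$. Put $g=\iota\circ f:Y\to X$. As a composition of continuous maps, $g$ is continuous. Since $A\subseteq\tilde{X}$, we have $\iota^{-1}(A)=A$, and therefore
\[
g^{-1}(A)=f^{-1}(\iota^{-1}(A))=f^{-1}(A).
\]
By hypothesis $A$ is $\mathcal{F}$-universally Baire in $X$, so $g^{-1}(A)$ has the property of Baire in $Y$. Hence $f^{-1}(A)$ has the property of Baire in $Y$. As $Y\in\mathcal{F}$ and $f$ were arbitrary, $A$ is $\mathcal{F}$-universally Baire in $\tilde{X}$.

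There is no real obstacle. The only point to check is that the property of Baire is always evaluated in the domain $Y$, which is the same in both settings, so changing the target space from $\tilde X$ to $X$ has no effect. The Polishness of $\tilde X$ is needed only so that the statement makes sense in the paper's framework; the argument itself does not use it.
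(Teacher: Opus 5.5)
Your proof is correct and is essentially the paper's own argument. The paper simply notes that every continuous $f\colon Y \to \tilde{X}$ can be regarded as a continuous map into $X$, which is exactly your composition with the inclusion, giving the same preimage of $A$.
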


\bp
We need to check that $f^{-1}(A)$ has the property of Baire in $Y$ for every $Y \in \mathcal{F}$ and continuous $f:Y \longrightarrow \tilde{X}$. But this is trivial, because every continuous $f:Y \longrightarrow \tilde{X}$ can be considered as a continuous $f:Y \longrightarrow X$.
\ep

\begin{lemma}
\label{bairetultranz}
Let $Z$ be a topological space and $A \s Y \s Z$. Suppose that $A$ has the property of Baire in $Y$ and $Y$ has the property of Baire in $Z$, then $A$ also has the property of Baire in $Z$.
\end{lemma}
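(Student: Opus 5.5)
We reduce the statement to two transfer facts: meagerness and openness pass from the subspace $Y$ to $Z$ up to a meager error. Since $A$ has the property of Baire in $Y$, write $A = U \triangle M$ with $U$ open in $Y$ and $M$ meager in $Y$. Similarly $Y = V \triangle N$ with $V$ open in $Z$ and $N$ meager in $Z$. We then show that $U$ and $M$ both have the property of Baire in $Z$. Since $\mathcal{BP}(Z)$ is a $\sigma$-algebra, it will follow that $A = U \triangle M \in \mathcal{BP}(Z)$.

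\emph{Step 1: a nowhere dense subset of $Y$ is nowhere dense in $Z$, up to a meager error.} Let $D \s Y$ be nowhere dense in $Y$. We claim $D \cap V$ is nowhere dense in $Z$. Take a nonempty open $W \s Z$. If $W \cap V = \emptyset$ we are done. Otherwise $W \cap V$ is a nonempty open set, and it cannot be contained in $N$ unless it is meager. To avoid needing the Baire category theorem in an arbitrary space, it is cleaner to argue directly about closures. Let $G = Z \setminus \overline{D \cap V}$, which is open. Suppose $W := \inter \overline{D\cap V}$ is nonempty. Then $W \s \overline{V}$, and $W \cap V \neq \emptyset$ because $V$ is open and dense in $\overline V$. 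Now $W \cap V \cap Y = (W\cap V) \setminus N$. If this set is empty, then $W\cap V \s N$ is meager. Otherwise $W \cap Y$ is a nonempty open subset of $Y$ in which $D$ is dense, which contradicts $D$ being nowhere dense in $Y$. So in all cases, the part $(W\cap V)\setminus N$ is empty, and $D \cap V \s (\overline{D\cap V} \setminus W) \cup (W\cap V \cap N)$. The first set is closed with empty interior and hence nowhere dense; the second is meager. Thus $D \cap V$ is meager in $Z$, and $D \s (D\cap V) \cup N$ is meager in $Z$. Taking countable unions, $M$ is meager in $Z$.

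\emph{Step 2: open subsets of $Y$.} We have $U = O \cap Y$ for some $O$ open in $Z$. Then $U = O \cap Y \in \mathcal{BP}(Z)$, since $O$ is open and $Y \in \mathcal{BP}(Z)$. Combining the two steps, $A = U \triangle M$ has the property of Baire in $Z$.

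The main obstacle is Step 1. In a general (possibly non-Baire, non-Hausdorff) space $Z$ we cannot argue that $V \setminus N$ is dense in $V$, so the meager set $N$ has to be absorbed into the error term, as above. The key observation is that the failure of nowhere density of $D\cap V$ can only occur on an open set $W\cap V$ that is entirely swallowed by $N$. The check that $D$ dense in $W\cap Y$ contradicts nowhere density in $Y$ needs some care. Some $y\in W\cap Y$ exists; for an open $W'\s W$ meeting $Y$, the set $W'$ is a nonempty open subset of $W \s \overline{D}$, so $W'$ meets $D$. Hence $D$ is dense in the nonempty open set $W\cap Y$ of $Y$, which is the desired contradiction.
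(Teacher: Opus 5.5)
Your proof is correct and follows essentially the paper's route: you write $A$ as the trace on $Y$ of a nice subset of $Z$ combined with a set meager in $Y$, handle the trace using $Y\in\mathcal{BP}(Z)$, and show that sets meager in $Y$ are meager in $Z$ (you trace an open set, where the paper traces a Borel set). Step 1 is more roundabout than it needs to be: any nonempty open $W\subseteq\overline{D}$ already meets $D\subseteq Y$, so your ``otherwise'' argument applies unconditionally and shows that a nowhere dense subset of $Y$ is nowhere dense in $Z$ outright, with no meager error and no use of $V$ or $N$ (in particular the case $(W\cap V)\setminus N=\emptyset$ with $W\neq\emptyset$ never occurs, since $W\cap D\cap V\neq\emptyset$ and $D\cap V\subseteq V\setminus N$), so your closing remark that $N$ has to be absorbed into an error term is unfounded.
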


\bp Since $A$ has the property of Baire in $Y$, we have that $A=B \cup M $, where $B$ is a Borel set in $Y$ and $M$ is a meager set in $Y$. Since $B$ is a Borel set in $Y$, there exists a Borel set $\tilde{B}$ in $Z$, such that $B=\tilde{B} \cap Y$. It is easy to see that if $N$ is a nowhere dense subset of $Y$, then it is a nowhere dense subset of $Z$, hence any meager set in $Y$ is also a meager set in $Z$. To finish the proof, we only need to note that $A = (\tilde{B} \cap Y) \cup M$, which clearly has the property of Baire in $Z$.
\ep

\subsection{Positive results}

In this subsection we will prove that all universally Baire notions that appear in the same frame in Theorem \ref{thm:diagram1} are indeed equivalent.

First we show that the following notions from the top frame are equivalent:

\begin{claim}\label{top box notions are the same}
For a subset $A \s X$, the following nine properties are equivalent:

\begin{enumerate}[label=(\arabic*)]
\item
\label{1}
$\topsp$,

\item\label{2}
\hausdorff,

\item\label{3}
$\regular$,

\item\label{4}
$\cech$,

\item\label{5}
metrizable-universally Baire,

\item\label{6}
completely metrizable-universally Baire,

\item\label{7}
$\card$,

\item\label{8}
$\cardinf$,

\item\label{9}
$\{ Y \mid Y \text{ is an ordered space} \}$-universally Baire.
\end{enumerate}

\end{claim}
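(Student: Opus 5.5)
All implications from \ref{1} are trivial, because each of the classes in \ref{2}--\ref{9} is a subclass of the class of all topological spaces. So the plan is to close the cycle by proving \ref{8}$\Rightarrow$\ref{7} and \ref{8}$\Rightarrow$\ref{1}. Every class in \ref{2}--\ref{9} contains the spaces $\lambda^\omega$ for infinite $\lambda$: these are metrizable, completely metrizable, Hausdorff, \v{C}ech-complete, have a regular (clopen) basis, and are ordered by the lexicographic order. Hence each of \ref{2}--\ref{9} implies \ref{8}, and the claim follows.

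For \ref{8}$\Rightarrow$\ref{7} only finite $\lambda$ needs treatment. For $\lambda\le 1$ the space $\lambda^\omega$ has at most one point, so there is nothing to prove. For $2\le\lambda<\omega$ I compose with the continuous open surjection $r:\mathbb{N}^\omega\to\lambda^\omega$ given by $r(x)(n)=x(n)\bmod\lambda$. Given $f:\lambda^\omega\to X$, the set $r^{-1}(f^{-1}(A))$ has the property of Baire by \ref{8}. Since $r$ is open and continuous, preimages of nowhere dense sets are nowhere dense, and a set $B$ has the property of Baire as soon as $r^{-1}(B)$ does. To see the latter, take an open $U$ with $r^{-1}(B)\triangle U$ meager and compare $B$ with $r(U)$ locally on basic clopen sets, where $r$ restricts to a homeomorphism-like product projection.

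The heart of the proof is \ref{8}$\Rightarrow$\ref{1}. Let $Y$ be arbitrary, $f:Y\to X$ continuous, and fix a complete metric on $X$. I will use the Banach category theorem: it suffices to show that every nonempty open $U\subseteq Y$ contains a nonempty open $V$ such that $V\cap f^{-1}(A)$ is either meager or comeager in $V$. Then the union of the $V$'s of the first kind and the union of those of the second kind, together with the union of all of them being dense, give an open set witnessing the property of Baire.

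To find $V$, let $\lambda\ge\omega$ be at least the weight of $Y$ and build a tree $(V_s)_{s\in\lambda^{<\omega}}$ of nonempty open subsets of $U$ with the following properties:
\begin{itemize}[label=$\bullet$]
\item $V_{s^\smallfrown i}\subseteq V_s$;
\item $\operatorname{diam} f(V_s)\le 2^{-|s|}$, which is possible by continuity of $f$;
\item $\{V_{s^\smallfrown i}: i<\lambda\}$ is a $\pi$-base of $V_s$, meaning every nonempty open subset of $V_s$ contains some $V_{s^\smallfrown i}$.
\end{itemize}
Define $g:\lambda^\omega\to X$ by letting $g(x)$ be the unique point of $\bigcap_n\overline{f(V_{x|n})}$, which exists by completeness of $X$. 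Then $g$ is continuous, and whenever $y\in\bigcap_n V_{x|n}$ we have $f(y)=g(x)$. By \ref{8}, $g^{-1}(A)$ has the property of Baire. Hence there is $s$ such that $g^{-1}(A)$ is either comeager or meager in $[s]$; say it is comeager, and put $V=V_s$.

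Now play the Banach--Mazur game in $V$ with target $f^{-1}(A)$. Player II answers each move $W$ by some $V_{t^\smallfrown i}\subseteq W$ that lies inside the current node, choosing $i$ according to a winning strategy for II in the Banach--Mazur game on $[s]$ with target $g^{-1}(A)$; such a strategy exists by Oxtoby's theorem in the Polish-like space $\lambda^\omega$. Along any run the chosen nodes form a branch $x\in g^{-1}(A)$, so the intersection of the play is contained in $f^{-1}(g(x))\subseteq f^{-1}(A)$. Thus II wins in $V$. By the direction of Oxtoby's theorem that holds in arbitrary topological spaces, namely that a winning strategy for II implies the target is comeager, $f^{-1}(A)$ is comeager in $V$. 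The meager case is symmetric, using the complement.

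I expect the main obstacle to be this transfer step: the interleaving of moves must genuinely respect the tree (each answer of II is a successor node of the previous one), and the direction of Oxtoby's theorem being used must be valid without any separation or completeness assumptions on $Y$.
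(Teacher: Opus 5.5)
\textbf{Comparison with the paper.} Your route to the hard implication differs from the paper's. The paper proves the chain (9)$\Rightarrow$(8)$\Rightarrow$(7)$\Rightarrow$(4)$\Rightarrow$(1):
\begin{itemize}
\item (9)$\Rightarrow$(8) by passing to the comeager set $R$ of sequences with infinitely many nonzero terms, where the lexicographic order topology agrees with the product topology, together with Lemma~\ref{bairetultranz};
\item finite $\lambda$ via $\kappa^\omega\cong 2^\omega$ and Claim~\ref{thm: polish=omega};
\item (7)$\Rightarrow$(4) by quoting \cite[1K Theorem]{Fremlin};
\item (4)$\Rightarrow$(1) by quoting \cite[1E Theorem]{Fremlin} for compact $X$, then reducing the general case to a Polish compactification of $X$.
\end{itemize}

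Your direct argument for (8)$\Rightarrow$(1) checks out: the tree $(V_s)_{s\in\lambda^{<\omega}}$ with shrinking $f$-images and $\pi$-base children, the continuous $g:\lambda^\omega\to X$ with $f(y)=g(x)$ for $y\in\bigcap_n V_{x\vert_n}$, and the Banach--Mazur transfer. The direction of Oxtoby's theorem you need (a winning strategy for II implies the target is comeager) holds in every nonempty topological space \cite[Theorem 8.33]{Ke}, so that worry is unfounded.

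State II's answer precisely. Given $W\subseteq V_t$, pick $j$ with $V_{t^\smallfrown j}\subseteq W$ and feed $[t^\smallfrown j]$ to the strategy. Then pick a cylinder $[t']$ inside its reply and play $V_{t'}$. So $t'$ is a descendant of $t$, not necessarily a child, and the union $x$ of the successive $t'$ lies in every reply, hence in $g^{-1}(A)$.

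Your approach gives a self-contained proof for any completely metrizable $X$, with no compactifications, no \v{C}ech-complete spaces and no appeal to Fremlin. The paper's route gains brevity by citation. Your separate (8)$\Rightarrow$(7) step is superfluous, since (8)$\Rightarrow$(1)$\Rightarrow$(7). Its sketch via $r$ would in any case need a Kuratowski--Ulam-type argument to show that $B\triangle r(U)$ is meager.

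\textbf{The step that fails as written.} You assert that $\lambda^\omega$ is an ordered space ``by the lexicographic order''. With the usual ordinal order on an infinite $\lambda$, the lexicographic order topology is strictly coarser than the product topology. If $a<p=(1,0,0,\dots)<b$, then $a(0)=0$, so $(a,b)$ contains $(0,n,0,0,\dots)$ for every $n>a(1)$. Hence no interval around $p$ lies in the cylinder $[(1)]$. This is exactly why the paper works on $R$.

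The repair is easy. Index each coordinate by an endpointless linear order $L$ of size $\lambda$, e.g.\ $\mathbb{Z}\times\lambda$ ordered lexicographically. Rays are always open in the product topology of $L^\omega$. For $x\in L^\omega$, $n\in\omega$ and $c<x(n)<c'$ in $L$, the interval between $x\vert_n{}^\smallfrown c^\smallfrown\cdots$ and $x\vert_n{}^\smallfrown {c'}^\smallfrown\cdots$ contains $x$ and lies in $[x\vert_n]$. So $L^\omega\cong\lambda^\omega$ is a linearly ordered space, and (9)$\Rightarrow$(8) is then immediate. Alternatively, use the paper's subspace $R$ together with Lemma~\ref{bairetultranz}.
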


\bp Recall that the following implications are all obvious because the corresponding $\mathcal{F}$-classes contain each other: $\ref{1} \Rightarrow \ref{2} \Rightarrow \ref{5} \Rightarrow \ref{6} \Rightarrow \ref{7} \Rightarrow \ref{8}$, $\ref{1} \Rightarrow \ref{3} \Rightarrow \ref{8}$, $\ref{1} \Rightarrow \ref{4} \Rightarrow \ref{8}$, $\ref{1} \Rightarrow \ref{9}$. So it is easy to see that all that remains to prove is \ref{9} $\Rightarrow$ \ref{8} $\Rightarrow$ \ref{1}.

In fact, we will prove \ref{9} $\Rightarrow$ \ref{8} $\Rightarrow$ \ref{7} $\Rightarrow$ \ref{4} $\Rightarrow$ \ref{1}. We start with the implication \ref{9} $\Rightarrow$ \ref{8}: Fix an infinite cardinal $\lambda$ and a continuous function $f:\lambda^\omega \longrightarrow X$ and our goal is to show that $f^{-1}(A)$ has the property of Baire. We define the following subset $R$ of $\lambda^\omega$ as
$$
R:=\{(\alpha_n)_{n\in \omega} \mid \exists^\infty n: \; \alpha_n \neq 0 \}.
$$
Since $R^c=\bigcup\limits_{k \in \omega} H_k$, where each $H_k=\{(\alpha_n)_{n\in \omega} \mid \forall n \geq k: \alpha_n =0 \}$ is closed and nowhere dense, the set $R$ is a comeager subset of $\lambda^\omega$.

We show that the order topology induced by the lexicographic order on $R$ and the subspace topology inherited from $\lambda^\omega$ coincide. Let us fix a point $p=(\alpha_n)_{n\in \omega} \in R$ and an open neighbourhood $V:=(\{\alpha_0\} \times \{\alpha_1\} \times...\times\{\alpha_l\}\times\lambda\times \lambda \times... )\cap R$ of $p$ in the subspace topology. Choose an $m > l$ such that $\alpha_m \neq 0$, then the interval $$U:= ((\alpha_0,...,\alpha_{m-1},0,\alpha_{m+1},\alpha_{m+2},...), (\alpha_0,...,\alpha_{m-1},\alpha_m + 1,\alpha_{m+1},\alpha_{m+2},...))$$ is contained in $V$ and contains $p$, showing that the topology induced by the lexicographic order on $R$ is finer than the subspace topology on $R$.

Conversely, suppose that we have a point $p=(\alpha_n)_{n\in \omega} \in R$ and an interval around it, that is, $(\alpha_n)_{n\in \omega} \in ((\beta_n)_{n\in \omega}, (\gamma_n)_{n\in \omega})$. Choose a sufficiently large $l$ such that $(\alpha_n)_{n\in l}, (\beta_n)_{n\in l}, (\gamma_n)_{n\in l}$ are pairwise distinct sequences of length $l$. Then it is easy to see that $p \in (\{\alpha_0\} \times \{\alpha_1\} \times...\times\{\alpha_{l-1}\}\times\lambda\times \lambda \times... )\cap R \s ((\beta_n)_{n\in \omega}, (\gamma_n)_{n\in \omega})$, which shows that the subspace topology on $R$ is finer than the topology induced by the lexicographic order.

Hence, if $g:=f \vert_R$ then the set $g^{-1}(A)=f^{-1}(A) \cap R$ has the property of Baire in $R$. By Lemma \ref{bairetultranz} $ f^{-1}(A) \cap R$ has the property of Baire in $\lambda^\omega$. Since $f^{-1}(A) \setminus R$ is meager in $\lambda^\omega$, we conclude that $f^{-1}(A)$ has the property of Baire in $\lambda^\omega$.

$\ref{8} \Rightarrow \ref{7}:$ Note that $\emptyset^\omega= \emptyset$, $|1^\omega|=1$, hence every set is $\{\emptyset^\omega\}$-universally Baire and $\{1^\omega\}$-universally Baire. Also note that instead of $\omega^\omega$ we write $\mathbb{N}^\omega$, hence it is enough to prove the following:
$$
\{\mathbb{N}^\omega\}\text{-universally-Baire} \Rightarrow \{\kappa^\omega \mid 2 \leq \kappa < \omega\}\textrm{-universally Baire}. 
$$
For any cardinal $\kappa$ such that $2 \leq \kappa < \omega$ the space $\kappa^\omega$ is homeomorphic to the Cantor set and by Claim \ref{thm: polish=omega} we have that $\n$ implies $\p$, which finishes the proof of $\ref{8} \Rightarrow \ref{7}$.

The implication $\ref{7} \Rightarrow \ref{4}$ is simply \cite[1K Theorem]{Fremlin}. 

Finally, we prove $\ref{4} \Rightarrow \ref{1}:$ First assume that $X$ is compact. In this case, by \cite[1E Theorem]{Fremlin} $\cech$ implies $\topsp$.

Let us turn to the general case, where we do not assume that $X$ is compact. Every Polish space has a Polish compactification (that is, a compact Polish space containing the original space as a dense subspace, see \cite[Theorem 4.14.]{Ke}), so let $\tilde{X}$ be a Polish compactification of $X$. 

Next we prove that the set $A$ is $\cech$ in the space $\tilde{X}$. Take an arbitrary \v{C}ech-complete space $Y$ and a continuous $f:Y \longrightarrow \tilde{X}$, we need to prove that $f^{-1}(A)$ has the property of Baire in $Y$. First notice that $X$ is a $G_\delta$ subspace of $\tilde{X}$, because it is a Polish subspace of a Polish space, so $f^{-1}(X)$ is a $G_\delta$ subspace of $Y$, hence $f^{-1}(X)$ is a \v{C}ech-complete space. If we denote by $g$ the restriction of $f$ to $f^{-1}(X)$, then $g$ is a continuous function from the \v{C}ech-complete space $f^{-1}(X)$ to $X$, hence $g^{-1}(A)=f^{-1}(A)$ has the property of Baire in $f^{-1}(X)$. But $f^{-1}(X)$ has the property of Baire in $Y$ and by Lemma \ref{bairetultranz} we get that $f^{-1}(A)$ has the property of Baire in $Y$. Therefore $A$ is $\cech$ in the space $\tilde{X}$.

By the compact case of the statement we obtain that $A$ is topological-universally Baire in $\tilde{X}$ and then Lemma $\ref{bővebbenuniveakkorkisebbenis}$ yields that $A$ is topological-universally Baire in $X$.
\ep

Let us continue with the proof of the implications in the left frame.

\begin{claim}
\label{thm: polish=omega} For a subset $A \s X$, the following equivalence holds:
$$\n \Longleftrightarrow \p.$$
\end{claim}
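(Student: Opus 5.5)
The direction $\p \Rightarrow \n$ is trivial, since $\mathbb{N}^\omega$ is itself a Polish space. So the whole work is in the converse. I will fix $A \s X$ that is $\n$, a Polish space $Y$ and a continuous $f: Y \longrightarrow X$, and show that $f^{-1}(A)$ has the property of Baire in $Y$.

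The plan is to decompose $Y$ using Lemma \ref{perfectcontbairelemma}. Write $Y = Y_\textsl{perf} \cup Y_\textsl{ctbl}$ with $Y_\textsl{ctbl}$ countable and open.

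\emph{The countable part.} Every subset of the countable set $Y_\textsl{ctbl}$ is $F_\sigma$ in $Y$. Hence $f^{-1}(A) \cap Y_\textsl{ctbl}$ is Borel, and in particular has the property of Baire in $Y$.

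\emph{The perfect part.} $Y_\textsl{perf}$ is closed, so it is a perfect Polish space. By Lemma \ref{perfectcontbairelemma} it contains a dense $G_\delta$ subspace $G$ homeomorphic to $\mathbb{N}^\omega$; fix a homeomorphism $h:\mathbb{N}^\omega \longrightarrow G$.
\begin{enumerate}[label=(\roman*)]
\item The map $f \circ h : \mathbb{N}^\omega \longrightarrow X$ is continuous. By the assumption on $A$, the set $(f\circ h)^{-1}(A)$ has the property of Baire in $\mathbb{N}^\omega$.
\item Transporting along $h$, the set $f^{-1}(A) \cap G$ has the property of Baire in $G$.
\item $G$ is a $G_\delta$, hence Borel, subset of $Y_\textsl{perf}$. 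So Lemma \ref{bairetultranz} shows that $f^{-1}(A) \cap G$ has the property of Baire in $Y_\textsl{perf}$.
\item $Y_\textsl{perf} \setminus G$ is meager in $Y_\textsl{perf}$, being the complement of a dense $G_\delta$. Therefore $f^{-1}(A) \cap Y_\textsl{perf}$ has the property of Baire in $Y_\textsl{perf}$.
\item $Y_\textsl{perf}$ is closed in $Y$, so a second application of Lemma \ref{bairetultranz} shows that $f^{-1}(A)\cap Y_\textsl{perf}$ has the property of Baire in $Y$.
\end{enumerate}

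Taking the union of the two pieces gives that $f^{-1}(A)$ has the property of Baire in $Y$, which completes the converse.

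There is no serious obstacle. The only points needing care are the two uses of Lemma \ref{bairetultranz}, where we must check that the intermediate subspaces have the property of Baire in the ambient space: $G$ is a $G_\delta$ and $Y_\textsl{perf}$ is closed, so this holds. We must also remember the degenerate cases. If $Y_\textsl{perf}=\emptyset$, the perfect part is empty and only the countable part remains. If $Y$ is empty, there is nothing to prove.
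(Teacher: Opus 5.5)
Your proof is correct and follows essentially the same route as the paper. Both split $Y$ into its perfect kernel and a countable open part, pass to a dense $G_\delta$ copy of $\mathbb{N}^\omega$ inside the perfect kernel via Lemma~\ref{perfectcontbairelemma}, and climb back to $Y$ using Lemma~\ref{bairetultranz} twice together with the meagerness of the complement of that dense $G_\delta$; your treatment of the case $Y_\textsl{perf}=\emptyset$ corresponds to the paper's opening reduction to uncountable $Y$.
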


\bp
It is clear that if the set $A$ is $\p$, then it implies that $A$ is also $\n$, so we need to prove the other direction. 

Let $A$ be $\n$, our goal is to prove that $A$ is $\p$. For this, let $Y$ be an arbitrary Polish space and $f:Y \longrightarrow X$ be an arbitrary continuous function. We need to prove that $f^{-1}(A)$ has the property of Baire. 

Take a partition of $Y$ in a form $Y= Y_\textsl{perf} \cup  Y_\textsl{ctbl}$, where $Y_\textsl{perf}$ and $Y_\textsl{ctbl}$ are disjoint and $Y_\textsl{perf}$ is a perfect subset and $Y_\textsl{ctbl}$ is a countable open set. If $Y$ is countable, then each of its subsets have the property of Baire. Hence, we can assume that  $Y$ is uncountable. In this case the above $Y_\textsl{perf}$ is nonempty.

It is enough to prove that $f^{-1}(A) \cap Y_\textsl{perf}$ has the property of Baire in $Y$, because $f^{-1}(A) \cap Y_\textsl{ctbl}$ is countable. By Lemma \ref{bairetultranz} it is enough to prove that $f^{-1}(A) \cap Y_\textsl{perf}$ has the property of Baire in $Y_\textsl{perf}$.

All in all, if $g$ denotes the restriction of $f$ to $Y_\textsl{perf}$, it is enough to prove that $g^{-1}(A)$ has the property of Baire in $Y_\textsl{perf}$. Let $N$ be a dense subspace of $Y_\textsl{perf}$ homeomorphic to $\bs$, by Lemma \ref{perfectcontbairelemma} such a subspace exists. The set $Y_\textsl{perf} \setminus N$ is meager, because $N$ is Polish, hence it is a $G_\delta$ subspace of $Y_\textsl{perf}$, which is also dense, so it is enough to prove that $g^{-1}(A) \cap N$ has the property of Baire in $Y_\textsl{perf}$. Again, by Lemma \ref{bairetultranz}, it is enough to prove that $g^{-1}(A) \cap N$ has the property of Baire in $N$. Denoting by $h$ the restriction of $g$ to $N$, we only need to prove that $h^{-1}(A)=g^{-1}(A) \cap N$ has the property of Baire in $N$.  But the latter is trivial, because $h$ is also a continuous function from the space $N$ homeomorphic to $\mathbb{N}^\omega$, hence $h^{-1}(A)$ has the property of Baire in $N$.
\ep

We prove now that the notions in the right frame are equivalent:

\begin{claim}\label{right box notions are the same}
For a subset $A \s X$, the following equivalence holds:
$$\ch  \Longleftrightarrow \compact.$$
\end{claim}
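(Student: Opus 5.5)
The direction from compact-universally Baire to compact Hausdorff-universally Baire is immediate, since every compact Hausdorff space is compact. So the work is in the converse. Fix a compact (possibly non-Hausdorff) space $Y$ and a continuous $f:Y\to X$. The aim is to show that $f^{-1}(A)$ has the property of Baire in $Y$, by passing to a compact Hausdorff quotient through which $f$ factors.

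\emph{Step 1: the quotient.} Define $y\sim y'$ iff $g(y)=g(y')$ for every continuous $g:Y\to[0,1]$. Let $q:Y\to K$ be the map into the image of the evaluation map $Y\to[0,1]^{C(Y,[0,1])}$. Then $K$ is compact, as a continuous image of a compact space, and Hausdorff, as a subspace of a Tychonoff cube. Since $X$ is Polish, it embeds in $[0,1]^\omega$. Each coordinate of $f$ is therefore a continuous real function on $Y$ and factors through $q$. Hence $f=\tilde f\circ q$ with $\tilde f:K\to X$ continuous: each coordinate of $\tilde f$ is a coordinate projection restricted to $K$. By hypothesis, $\tilde f^{-1}(A)=U\triangle M$ with $U$ open and $M$ meager in $K$. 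Then
\[
f^{-1}(A)=q^{-1}(U)\triangle q^{-1}(M),
\]
and $q^{-1}(U)$ is open.

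\emph{Step 2: preimages of nowhere dense sets.} It remains to show that $q^{-1}(N)$ is nowhere dense in $Y$ whenever $N\subseteq K$ is nowhere dense. This is the main obstacle, because $q$ need not be open. I would first reduce to closed nowhere dense $N$ by taking the closure, and then argue by contradiction. Suppose $q^{-1}(N)$ contains a nonempty open $W\subseteq Y$. The natural attempt is to find a nonempty open $V\subseteq K$ with $q^{-1}(V)\subseteq W$. Then $V\subseteq N$ by surjectivity of $q$, a contradiction. The difficulty is that $W$ need not be saturated and $Y$ need not be normal.

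\emph{Fallback if Step 2 fails.} If this cannot be made to work in general, I would drop the quotient and instead replace $Y$ by its set with a coarser compact Hausdorff topology, or by the Stone space of its regular open algebra. The Stone space is compact Hausdorff, and the category algebra of $Y$ embeds into it. The key point there is that meager/Baire structure transfers along the canonical correspondence between $\mathrm{RO}(Y)$ and the clopen sets of the Stone space. Then $f$ would induce a continuous map via the regular open preimages of a countable basis of $X$. I expect this Stone-space transfer, essentially as in Fremlin's arguments (\cite[1E Theorem]{Fremlin}), to be the robust route, with the category-preservation check being the crux.
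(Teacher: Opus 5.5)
Your main route has a genuine gap. Step 2 is false, and no lemma about $q$ alone can repair Step 1.

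Let $K=[0,1]^2$, let $P=\{p_t : t\in[0,1]\}$ be a disjoint copy of $[0,1]$, and put $Y=K\sqcup P$. Declare $U\cup V$, with $U$ open in $K$ and $V$ open in $P$, to be open in $Y$ iff $(t,0)\in U$ implies $p_t\in V$. This has the following properties.
\begin{itemize}
\item It is a topology, $K$ and $P$ carry their usual topologies, and $P$ is open in $Y$.
\item $Y$ is compact: finitely many members of an open cover that cover $K$ already cover $P$.
\item Every neighbourhood of $(t,0)$ contains $p_t$, so every continuous $g:Y\to[0,1]$ satisfies $g(p_t)=g(t,0)$.
\item Conversely, every continuous function on $K$ extends to $Y$ by $p_t\mapsto$ its value at $(t,0)$.
\end{itemize}
Hence your quotient is exactly the continuous map $q:Y\to K$ with $q|_K=\mathrm{id}$ and $q(p_t)=(t,0)$.

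Now $N=[0,1]\times\{0\}$ is nowhere dense in $K$, but $q^{-1}(N)\supseteq P$ has nonempty interior. Worse, take $X=[0,1]^2$, $f=q$ (so $\tilde f=\mathrm{id}_K$), and $A=B\times\{0\}$ with $B\subseteq[0,1]$ lacking the property of Baire. Then $\tilde f^{-1}(A)=A$ is nowhere dense in $K$. Yet $f^{-1}(A)\cap P=\{p_t : t\in B\}$ lacks the property of Baire in the open subspace $P\cong[0,1]$, so $f^{-1}(A)\notin\mathcal{BP}(Y)$.

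Your argument uses the hypothesis on $A$ only through the single map $\tilde f$. Any completion of Step 2 would therefore show that $\tilde f^{-1}(A)\in\mathcal{BP}(K)$ implies $f^{-1}(A)\in\mathcal{BP}(Y)$, which this example refutes. The hypothesis has to be applied to other compact Hausdorff domains. Here it is the map $t\mapsto(t,0)$ on $[0,1]$ that shows $A$ is not compact Hausdorff-universally Baire.

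Your fallback is a pointer rather than a proof. A coarser compact Hausdorff topology need not exist at all, for instance for the cofinite topology on an infinite set. There is also no point map between $Y$ and the Stone space of its regular open algebra. So the ``category-preservation check'' you defer is the entire content of Fremlin's theorem. Moreover, the induced continuous map needs a compact target, which you never arrange.

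That reduction to a compact target is the missing idea, and it makes the quotient unnecessary. The paper argues as follows.
\begin{itemize}
\item $f(Y)$ is a compact subset of the Polish space $X$, hence compact metrizable.
\item $A\cap f(Y)$ is compact Hausdorff-universally Baire in $X$, since these sets form a $\sigma$-algebra containing the Borel sets. It is therefore also compact Hausdorff-universally Baire in the subspace $f(Y)$, by the first lemma of Section~\ref{univbairesection1}.
\item On a compact metrizable space, compact Hausdorff-universal Baireness already implies topological-universal Baireness. This is Fremlin's 1E Theorem \cite{Fremlin}, i.e.\ the compact case of Theorem~\ref{thm:diagram2}.
\item Regarding $f$ as a map $Y\to f(Y)$ gives $f^{-1}(A)=f^{-1}(A\cap f(Y))\in\mathcal{BP}(Y)$.
\end{itemize}
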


\begin{proof}
It is clear that if the set $A$ is $\compact$, then $A$ is also $\ch$, so we need to prove the other direction.

Suppose that we are given a set $A$ which is compact Hausdorff-universally Baire and our goal is to prove that $A$ is compact-universally Baire.

Take an arbitrary compact space $Y$ and a continuous $f:Y \longrightarrow X$, we need to show that $f^{-1}(A)$ has the property of Baire in $Y$. The image $f(Y)$ is compact, hence $f(Y) \cap A$ is a $\ch$ set in $X$. By Lemma $\ref{bővebbenuniveakkorkisebbenis}$ we have that $f(Y) \cap A$ is $\ch$ in the subspace $f(Y)$. Theorem $\ref{thm:diagram2}$ (which we prove in the next section) guarantees that the set $f(Y) \cap A$ is $\compact$ in $f(Y)$, hence $f^{-1}(f(Y) \cap A)=f^{-1}(A)$ has the property of Baire in $Y$.
\end{proof}

Finally, we will prove that the notions in the bottom frame are equivalent:

\begin{claim}\label{bottom box notions are the same}
For a subset $A \s X$, the following equivalence holds:
$$
\cantor \Longleftrightarrow \cm.
$$
\end{claim}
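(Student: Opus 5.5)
One direction is immediate: if $A$ is compact metrizable-universally Baire, then it is in particular $\{2^\omega\}$-universally Baire, because $2^\omega$ is compact metrizable. For the converse, suppose $A$ is $\{2^\omega\}$-universally Baire, and fix a compact metrizable space $Y$ and a continuous $f:Y\longrightarrow X$. The goal is to show that $f^{-1}(A)$ has the property of Baire in $Y$. The argument will follow the proof of Claim \ref{thm: polish=omega} step by step, with the Cantor space in the role that $\mathbb{N}^\omega$ played there.

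First, a compact metrizable $Y$ is Polish, so it splits as $Y=Y_\textsl{perf}\cup Y_\textsl{ctbl}$. The countable part contributes only a countable set, and a countable subset of $Y_\textsl{ctbl}$ is trivially fine. So by Lemma \ref{bairetultranz} it suffices to show that $g^{-1}(A)$ has the property of Baire in $Y_\textsl{perf}$, where $g=f\vert_{Y_\textsl{perf}}$. Note that $Y_\textsl{perf}$ is closed in $Y$, hence a compact perfect metrizable space (or empty, in which case there is nothing to prove).

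The key point is that we may not simply pass to a dense $G_\delta$ copy of $\mathbb{N}^\omega$: that space is not compact, and we only know how $A$ behaves under maps from $2^\omega$. Instead, I would use the standard fact that every nonempty compact metrizable space is a continuous image of $2^\omega$. For our purposes we need something slightly stronger, namely a continuous surjection $\varphi:2^\omega\longrightarrow Y_\textsl{perf}$ that behaves well with respect to category, so that preimages of meager sets are meager and the property of Baire pulls back correctly. Concretely, I would build $\varphi$ by a Cantor scheme of closed sets $F_s$ ($s\in 2^{<\omega}$) with $F_\emptyset=Y_\textsl{perf}$, where each $F_s$ is the closure of a nonempty open set, diameters go to zero, and $F_s=F_{s^\smallfrown 0}\cup F_{s^\smallfrown 1}$. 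Splitting regular closed sets into finitely many regular closed pieces of small diameter, and repeating pieces to get binary branching, is possible because $Y_\textsl{perf}$ is perfect. Then $\varphi(x)$ is the unique point of $\bigcap_n F_{x\vert_n}$; this map is continuous and onto. Moreover, since each $F_s$ has nonempty interior, the image of every basic open set of $2^\omega$ has nonempty interior. Hence $\varphi$ is a continuous surjection that is category-preserving: preimages of nowhere dense sets are nowhere dense, and images of open sets are somewhere dense.

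Given such a $\varphi$, set $h=g\circ\varphi:2^\omega\longrightarrow X$. By hypothesis $h^{-1}(A)=\varphi^{-1}(g^{-1}(A))$ has the property of Baire in $2^\omega$. It remains to push this back along $\varphi$, and this is the step I expect to be the main obstacle: the property of Baire is not generally preserved by images. I would argue by contradiction via Banach localization. Suppose $B=g^{-1}(A)$ does not have the property of Baire in $Y_\textsl{perf}$. Then there is a nonempty open $U$ in which both $B$ and $U\setminus B$ are everywhere nonmeager, meaning nonmeager in every nonempty open subset of $U$. Since $\varphi^{-1}(B)$ has the property of Baire, there is a basic open set $[s]\subseteq\varphi^{-1}(U)$ on which $\varphi^{-1}(B)$ is either comeager or meager. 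In the first case $[s]\setminus\varphi^{-1}(B)$ is meager, and so its image, which contains $\varphi([s])\setminus B$, needs to be meager. This requires that images of meager sets under $\varphi$ be meager, which is stronger than what the construction above gives. To secure it, I would refine the construction so that $\varphi$ is irreducible: choose the pieces $F_{s^\smallfrown i}$ to have pairwise disjoint interiors, so that overlaps occur only on boundaries, and these boundary sets form a meager $F_\sigma$ set. Off this set $\varphi$ is a homeomorphism between comeager sets, and irreducible maps between compact spaces carry nowhere dense sets to nowhere dense sets. Once images of nowhere dense sets are known to be nowhere dense, $\varphi([s])$ contains a nonempty open $V\subseteq U$ in which $B$ is comeager, or meager in the other case. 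This contradicts the choice of $U$ and finishes the proof.
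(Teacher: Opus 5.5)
Your proof is correct and is essentially the paper's argument: the paper also reduces to $K_{\textsl{perf}}$ and uses an irreducible continuous surjection $2^\omega\to K_{\textsl{perf}}$, so that images of meager sets are meager. It quotes Woods for the existence of this map rather than building it via a Cantor scheme, and it finishes directly by writing the preimage as $B\cup M$ with $B$ Borel and $M$ meager and noting that the image of $B$ is analytic, which replaces your Banach-localization step. One thing to tidy up: in the irreducible version you cannot get binary branching by repeating pieces, since a repeated piece has the same interior and destroys irreducibility; instead split each $F_s$ into at least two regular closed pieces with disjoint nonempty interiors (possible since nonempty open subsets of a perfect space are infinite), peeling off one small piece at a time so that diameters still tend to $0$ along every branch.
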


\bp

It is clear that if the set $A$ is $\cm$, then $A$ is also $\cantor$, so we need to prove the other direction. 

Let $A$ be $\{2^\omega\}$-universally Baire, our goal is to prove that $A$ is $\cm$.

Recall that a continuous surjective function $f:Y \longrightarrow Z$ between topological spaces is called \emph{irreducible} if for every proper closed subset $F \s Y$ the image $f(F)$ is a proper closed subset of $Z$.
In \cite[Lemma 2.1.]{Woods} it was proved that if $K$ is a perfect compact metric space, then there exists an irreducible mapping $f:2^\omega \longrightarrow K$.
By \cite[pg. 455]{PW} if $f:Y \longrightarrow Z$ is an irreducible mapping, then for every nowhere dense closed set $N \subseteq Y$ we have that $f(N)$ is nowhere dense in $Z$, therefore if $M \subseteq Y$ is meager then $f(M)$ is meager in $Z$.

Take an arbitrary compact metric space $K$  and an arbitrary continuous $g:K \longrightarrow X$, we need to check that $g^{-1}(A)$ has the property of Baire in $K$.

If $K$ is countable then the statement is trivial, so we can assume that $K$ is uncountable. As above, let $K=K_\textsl{perf} \cup K_\textsl{ctbl}$, then $K_\textsl{perf}$ is uncountable.

Since $K_\textsl{ctbl}$ is countable, it is enough to prove that $g^{-1}(A) \cap K_\textsl{perf}$ has the property of Baire in $K$. By Lemma \ref{bairetultranz}, it is enough to prove that $g^{-1}(A) \cap K_\textsl{perf}$ has the property of Baire in $K_\textsl{perf}$. We denote by $\tilde{g}$ the restriction of $g$ to $K_\textsl{perf}$. Choose an arbitrary irreducible map $f:2^\omega \to K_\textsl{perf}$, since $ \tilde{g} \circ f$ is continuous, by assumption, $f^{-1}(\tilde{g}^{-1}(A))$ has the property of Baire in $2^\omega$, so we can write $f^{-1}(\tilde{g}^{-1}(A))=B \cup  M$, where $B$ is Borel and $M$ is meager. Since $f$ was also surjective, we have that $g^{-1}(A) \cap K_\textsl{perf}=\tilde{g}^{-1}(A)=f(f^{-1}(\tilde{g}^{-1}(A))) = f(B) \cup f(M)$. We have that $f(B)$ is an analytic set (see \cite[Theorem 14.2.]{Ke}), so it has the property of Baire (see \cite[Theorem 21.6.]{Ke}) and $f(M)$ is meager by the irreducibility of the function $f$, so $f(B) \cup f(M)$ has the property of Baire in $K_\textsl{perf}$.
\ep

With the proofs of Claim \ref{top box notions are the same} - \ref{bottom box notions are the same} the proof of Theorem \ref{thm:diagram1}. is complete.

\subsection{Consistent negative results}

Now we prove that there are no more $ZFC$ implications in Theorem \ref{thm:diagram1}.

\begin{theorem}
\label{t:noneq}
Under the Continuum Hypothesis the four notions of universal Baireness (topological-universally Baire, Polish-universally Baire, compact-universally Baire and compact metrizable-universally Baire) are pairwise non\-equivalent (in suitably chosen Polish spaces).
\end{theorem}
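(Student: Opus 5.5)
The plan is to reduce the statement to two separating examples, because of the implications already established in Theorem~\ref{thm:diagram1}. Write $T,P,C,CM$ for topological-, Polish-, compact- and compact metrizable-universally Baire. We know $T\Rightarrow P\Rightarrow CM$ and $T\Rightarrow C\Rightarrow CM$. It therefore suffices to find, under CH:
\begin{enumerate}[label=(\alph*)]
\item a set which is $C$ but not $P$;
\item a set which is $P$ but not $C$.
\end{enumerate}
Indeed, (a) then also separates $T$ from $C$ (since $T\Rightarrow P$) and $CM$ from $P$ (since $C\Rightarrow CM$). Similarly, (b) separates $T$ from $P$ and $CM$ from $C$. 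Together with $P$ versus $C$ itself, this covers all six pairs.

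For (a) I would work in $X=\mathbb{N}^\omega$ and take a Luzin set $L\subseteq \mathbb{N}^\omega$, which exists under CH. It is not $P$: the identity map $\mathbb{N}^\omega\to\mathbb{N}^\omega$ has preimage $L$, and $L$ lacks the property of Baire, since an uncountable set meeting every meager set in a countable set cannot be equal to an open set modulo a meager set. It is $C$: if $Y$ is compact and $f:Y\to\mathbb{N}^\omega$ is continuous, then $f(Y)$ is compact, hence nowhere dense in $\mathbb{N}^\omega$. So $f(Y)\cap L$ is countable, and $f^{-1}(L)=\bigcup_{x\in f(Y)\cap L} f^{-1}(\{x\})$ is a countable union of closed sets. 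This is an $F_\sigma$ set and has the property of Baire. Here non-$\sigma$-compactness of $X$ is essential, which is consistent with Theorem~\ref{thm:diagram2}.

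For (b) I would work in $X=2^\omega$ with the coin-flipping measure $\mu$ and use a Sierpi\'nski-type set. To show it is not $C$, let $Y$ be the Stone space of the measure algebra of $\mu$, with the canonical continuous surjection $\pi:Y\to 2^\omega$. Preimages of Borel sets are clopen modulo meager sets, and $\pi^{-1}(N)$ is meager for every $\mu$-null $N$. Consequently, if $\pi^{-1}(S)$ had the property of Baire, then $S$ would be $\mu$-measurable. So any $S$ of full outer measure and inner measure zero is not $C$; this holds for a Sierpi\'nski set, since every closed subset of it is countable and hence null.

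To make $S$ be $P$, I would build it by a CH recursion of length $\omega_1$. I enumerate all continuous $f_\alpha:\mathbb{N}^\omega\to 2^\omega$, which suffices by Claim~\ref{thm: polish=omega}, and all Borel null sets. At stage $\alpha$ I add a point avoiding the earlier null sets, to keep $S$ Sierpi\'nski of full outer measure. I also make sure that for every $\beta<\alpha$, the point avoids a fixed meager-image set $f_\beta(M_\beta)$ whenever that set is null. Then $f_\beta^{-1}(S)\cap M_\beta$ reduces to countably many closed fibres, as in (a).

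The main obstacle is the remaining part of $f^{-1}(S)$. I would first choose, for each $f$, a dense $G_\delta$ set $G\subseteq\mathbb{N}^\omega$ on which $f$ is well behaved. On open pieces where $f$ is locally constant the preimage is trivially clopen. On pieces where $f|_G$ is nowhere locally constant, I would aim to show that the image is non-null, so that $S$ can be arranged, by adding a bookkeeping requirement in the recursion, to meet it in a set whose preimage is comeager or meager on each piece. If this direct route fails, the fallback is to instead take the CH example that Theorem~\ref{thm:diagram2} provides in the $\sigma$-compact space $2^\omega$: a set which is compact metrizable-universally Baire but not topological-universally Baire. In $2^\omega$ that notion is $P$, and by Theorem~\ref{thm:diagram2} not-$T$ means not-$C$, which gives exactly (b).
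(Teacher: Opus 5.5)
Your reduction to the two separating examples and your Luzin-set argument for (a) match the paper. The gap is in (b): you do not prove that the Sierpi\'nski-type set $S$ is Polish-universally Baire.

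Your recursion only controls $f_\beta^{-1}(S)\cap M_\beta$ for a meager $M_\beta$. That set is meager, so it has the property of Baire for free. The comeager remainder, which you correctly call the main obstacle, is left unresolved. The heuristic you suggest for it is false: a continuous nowhere locally constant map can have null image. For example, the map $x\mapsto y$ with $y(2n)=0$ and $y(2n+1)=x(n)\bmod 2$ sends $\mathbb{N}^\omega$ into the null closed set $\{y\in 2^\omega : y(2n)=0 \text{ for all } n\}$.

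The missing observation is that a comeager part of the domain can always be chosen to have \emph{null} image. So no recursion is needed, and every Sierpi\'nski set is Polish-universally Baire. Given a Polish $Y$ and a continuous $f$, take a countable dense $D\subseteq Y$ and a $G_\delta$ null set $G\supseteq f(D)$. Then $f^{-1}(G)$ is a dense $G_\delta$, so $f^{-1}(S\setminus G)$ is meager. Also $S\cap G$ is countable, so $f^{-1}(S\cap G)$ is a countable union of closed fibres. This is exactly the paper's argument.

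Your fallback does not repair the gap. The CH nonequivalence asserted in Theorem~\ref{thm:diagram2} (like that in Theorem~\ref{thm:diagram1}) is witnessed precisely by the Sierpi\'nski set of the present proof, so appealing to it is circular.

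Your non-$C$ argument through the Stone space of the measure algebra is a legitimate, self-contained alternative to the paper's appeal to Theorem~\ref{thm:diagram2} plus Fremlin's measurability result. It has the merit of exhibiting a compact witness directly. As written, though, the two facts you list only show that null sets have meager preimages. To conclude that $S$ is measurable when $\pi^{-1}(S)$ has the property of Baire, you also need the converse: $\pi^{-1}(E)$ meager forces $E$ to be null. One way to get it: meager subsets of the Stone space are null for the lifted measure $\hat\mu$, and $\hat\mu$ is inner regular with respect to clopen (compact) sets $K$. Each $\pi(K)$ is compact, has $\mu$-measure at least $\hat\mu(K)$, and is disjoint from $E$ whenever $K$ avoids a meager set containing $\pi^{-1}(E)$. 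Finally, a Sierpi\'nski set need not have full outer measure. What it does have is positive outer measure and inner measure zero, and that already suffices for non-measurability.
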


\begin{proof}
It is easy to check that we only need to show that there exists a set $S$ which is $\p$ but not $\compact$ and there exists a set $L$ which is $\compact$ but not Polish-universally Baire (in suitably chosen Polish spaces). Therefore the following two examples will complete the proof.

\begin{ex}
\label{ex:S}
Let $S\s [0,1]$ be a Sierpi\'nski set. Then $S$ is Polish-universally Baire $(\Longleftrightarrow \cm$ by Theorem \ref{thm:diagram2}), but not $\compact$ $( \Longleftrightarrow \topsp$ by Theorem \ref{thm:diagram2}).
\end{ex}

\bp
 First  we prove that $S$ is $\p$. 
Let $Y$ be a Polish space and
$f: Y \longrightarrow [0,1]$ be continuous, we need to show that $f^{-1}(S)$ has the property of Baire. We can find a dense $G_\delta$ null set $G \s [0,1]$ such that  $f^{-1}(G)$ is dense (take the image of a countable dense set and complete it to a countable dense subset of $[0,1]$, then cover this set by a $G_\delta$ null set).
Then $S \cap G$ is countable, hence $f^{-1}(S \cap G)$ is $F_\sigma$, so has the property of Baire. On the other hand, $f^{-1}(S \cap G^c)$ is disjoint from the dense $G_\delta$ set $f^{-1}(G)$, hence it is meager and therefore has the property of Baire. Thus, $f^{-1}(S) = f^{-1}(S \cap G) \cup f^{-1}(S \cap G^c)$ also has the property of Baire.

Next, it is well-known and easy to see that a Sierpi\'nski set is not measurable. By Theorem \ref{thm:diagram2}, if a set $D$ is compact-universally Baire in $[0,1]$ then $D$ is \v{C}ech-complete-universally Baire, and by \cite[1C Proposition]{Fremlin} any $\cech$ set is Lebesgue measurable. Hence $S$ is not $\compact$.
\ep

\begin{ex}
\label{ex:L}
Let $L \s \bs$ be a Luzin set. Then $L$ is compact-universally Baire but not $\p$.
\end{ex}

\bp
First we show that $L$ is $\compact$. Fix a compact space $K$ and a continuous function $f:K \longrightarrow \bs$. Since the set $f(K)$ is a compact subset of $\bs$, it is also meager, so $f(K)\cap L$ is countable, hence $f^{-1}(L)$ is $F_\sigma$, so has the property of Baire in $K$.

Finally, since it is well-known and easy to see that a Luzin set cannot have the property of Baire, the identity function witnesses that $L$ is not $\p$.
\ep

This concludes the proof of Theorem \ref{t:noneq}.
\end{proof}

Let us now mention few more consistent examples that use hypotheses other than $CH$. 

Recall that $\add(\mathcal{M})$ denotes the \emph{additivity} of the meager sets of $\bs$, that is, the least cardinal $\kappa$, such that there exist meager sets $\{M_\alpha \mid \alpha < \kappa \}$ in $\bs$ such that $\underset{\alpha <\kappa} {\cup} M_\alpha$ is not meager. Note that replacing the space $\bs$ by any perfect Polish space, the cardinality $\add(\mathcal{M})$ does not change by Lemma \ref{perfectcontbairelemma}. It is well-known that the union of less than $\add(\mathcal{M})$ many sets with the property of Baire still has the property of Baire.

We also recall the \emph{cofinality} of Lebesgue null sets of $[0,1]$, which is denoted by $\cof(\mathcal{N})$. It is the least cardinal $\kappa$, such that there exist Lebesgue null sets $\{N_\alpha \mid \alpha < \kappa \}$ in $[0,1]$, which satisfy that for every Lebesgue null set $N \s [0,1]$ there exists an $\alpha$ such that $N \s N_\alpha$. The \emph{covering number} of the Lebesgue null sets is defined as the least cardinal $\kappa$ such that there exists Lebesgue null sets $\{N_\alpha \mid \alpha < \kappa \}$ in $[0,1]$ for which $[0,1] = \underset{\alpha < \kappa}{\cup} N_\alpha$. We denote the covering number by $\cov(\mathcal{N})$. Recall that there are various models of set theory where $\add(\mathcal{M})=\cov(\mathcal{N})=\cof(\mathcal{N})$, for example, under Martin's Axiom each of the previous cardinals equals $2^\omega$.

\begin{ex}\label{ex: modified Sieprinnski}
Suppose that $\add(\mathcal{M})=\cov(\mathcal{N})=\cof(\mathcal{N})$, then there exists $A \s [0, 1]$ that is Polish-universally Baire ($\iff$ compact metrizable-universally Baire by Theorem \ref{thm:diagram2}) but not compact-universally Baire ($\iff$ topological-universally Baire by Theorem \ref{thm:diagram2}).
\end{ex}

\begin{proof}
    Let $\kappa=\cov(\mathcal{N})$ and using the cardinal assumption $\cov(\mathcal{N}) = \cof(\mathcal{N})$, one can easily build by transfinite recursion a "Sierpi\'nski-like" set $A$ of cardinality $\kappa$ such that $\abs{A \cap N} < \kappa$, whenever $N$ is a Lebesgue null set in $[0,1]$.
    
    Note that such a set $A$ cannot be Lebesgue measurable. It was shown in \cite[1C Proposition]{Fremlin} that if a set $D$ is topological-universally Baire $(\Leftrightarrow \cech$), then $D$ is Lebesgue measurable, hence $A$ is not topological-universally Baire.

    We prove now that $A$ is $\p$. Take an arbitrary Polish space $Y$ and a continuous $f:Y \longrightarrow X$, we need to prove that $f^{-1}(A)$ has the property of Baire. Choose an arbitrary $G_\delta$ Lebesgue null set $G \s [0,1]$ such that $f^{-1}(G)$ is dense in $Y$ (as we did in Example \ref{ex:S}). We have that $\abs{A \cap G} < \add(\mathcal{M})$, hence $f^{-1}(A \cap G)$ is the union of less than $\add(\mathcal{M})$ many $F_\sigma$ sets, so $f^{-1}(A \cap G)$ has the property of Baire. On the other hand, the set $f^{-1}(A \cap G^c) \s f^{-1}(G^c)$, and the latter set is clearly meager in $Y$, which finishes the proof that the set $A$ is $\p$.   
\end{proof}

To give more consistent examples, we recall the cardinal $\add (\mathcal{N})$, the \emph{additivity} of the Lebesgue null sets of $[0, 1]$, that is, the least cardinal $\kappa$, such that there exist Lebesgue null sets $\{N_\alpha \mid \alpha < \kappa \}$ in $[0, 1]$, such that $\underset{\alpha < \kappa}{\cup} N_\alpha$ is not a Lebesgue null set. Recall that there exists a model of set theory where $\add (\mathcal{N}) < \add(\mathcal{M})$, see for example \cite[Model 7.6.9.]{Bart} where $\omega_1 = \add (\mathcal{N}) < \add (\mathcal{M})= \omega_2 = 2^\omega$ holds.

\begin{ex}
\label{ex:3}
Suppose that $\add (\mathcal{N}) < \add (\mathcal{M})$, then there exists $A \s [0, 1]$ that is Polish-universally Baire ($\iff$compact metrizable-universally Baire by Theorem \ref{thm:diagram2}) but not topological-universally Baire ($\iff$compact-universally Baire by Theorem \ref{thm:diagram2}).
\end{ex}

\bp
Let $\lambda$ denote the Lebesgue measure and let $\kappa=\add(\mathcal{N})$. Then there exists $ \{ N_\alpha \mid \alpha < \kappa \}$ in $[0, 1]$ with $\lambda(N_\alpha)=0$ for every $\alpha < \kappa$ such that 
$$
N:=\bigcup\limits_{\alpha < \kappa} N_\alpha
$$ 
has positive outer measure. We cover the set $N_\alpha$ by a Borel null set $\tilde{B}_\alpha$. We will make the $\tilde{B}_\alpha$-s disjoint by setting $B_\alpha:=\tilde{B}_\alpha \setminus \bigcup\limits_{\beta < \alpha} \tilde{B}_\beta$. Note that for every $\alpha < \kappa$ the set $B_\alpha$ is a null set. By the Four Poles Theorem \cite{4P} there exists $E \s \kappa$, such that $\bigcup\limits_{\alpha \in E} B_\alpha$ is not measurable. Set $A:=\bigcup\limits_{\alpha \in E} B_\alpha$. 

We prove now that the set $A$ is $\p$. Take an arbitrary Polish space $Y$ and a continuous $f:Y \longrightarrow [0,1]$, we need to check that $f^{-1}(A)$ has the property of Baire. Since the union of less than $\add(\mathcal{M})$ many sets with the property of Baire in a Polish space also has the property of Baire, we obtain that $f^{-1}\big(\bigcup\limits_{\beta < \alpha} \tilde{B}_{\beta}\big)=\bigcup\limits_{\beta < \alpha} f^{-1}(\tilde{B}_{\beta})$ has the property of Baire for every $\alpha <\kappa$. The latter also implies that $f^{-1}(B_{\alpha})= f^{-1}(\tilde{B}_{\alpha}) \setminus \bigcup\limits_{\beta < \alpha} f^{-1}(\tilde{B}_{\beta})$ has the property of Baire. Since $\kappa < \add (\mathcal{M})$ we obtain that $f^{-1}(A) = \bigcup\limits_{\alpha \in E} f^{-1}(B_\alpha)$ has the property of Baire.

On the other hand, as above, it was shown in \cite[1C Proposition]{Fremlin} that if a set $D$ is topological-universally Baire $(\Leftrightarrow \cech$), then $D$ is Lebesgue measurable, hence $A$ is not topological-universally Baire.
\ep

Let us give another consistent example, using uniformity numbers. The \emph{uniformity number} of the Lebesgue null sets of $[0,1]$, denoted by $\non(\mathcal{N})$, is the least cardinal $\kappa$ such that there exists a set $A \s [0,1]$ with cardinality $\kappa$, such that $A$ is not Lebesgue null. Similarly, the \emph{uniformity number} of the meager sets of $\bs$, denoted by $\non(\mathcal{M})$, is the least cardinal $\kappa$ such that there exists a set $A \s \bs$ with cardinality $\kappa$, such that $A$ is not a meager set. Note that replacing $\bs$ by any perfect Polish space in the definition of $\non(\mathcal{M})$, we get the same cardinality by Lemma \ref{perfectcontbairelemma}.

We also recall the definition of \emph{universally meager} sets. A set $M$ in a perfect Polish space $Y$ is called universally meager, if for every Borel isomorphism $f:Y \longrightarrow Y$, the set $f^{-1}(M)$ is meager in $Y$. There are many nontrivial equivalent formulations given by Zakrzewski in \cite{Zak1}, \cite{Zak2} when a set $M$ is universally meager. The crucial property we will use is that universally meager sets are $\p$. This will follow later as follows. By \cite[Theorem 2.1, (iii)]{Zak1}, a set $M \s Y$ is universally meager if and only if for every $\sigma$-ideal $\mathcal{I}$ on $\mathcal{B}(Y)$ such that $\mathcal{B}(Y) /\mathcal{I} \simeq \cat$ there exists a Borel set $B \in \mathcal{I}$ such that $M \s B$ (here $\cat$ denotes the category algebra, for a definition and further properties, see Section \ref{Boolean section}). If the previous assumption holds for $M$, then $M$ obviously satisfies \ref{B(X) modulo I = C} in Theorem \ref{thm: boole equivalence} below, which is equivalent to $M$ being $\p$.

\begin{ex}\label{ex:uniformity number}
    Suppose that $\non(\mathcal{N}) < \non(\mathcal{M})$, then there exists $A \s [0, 1]$ that is Polish-universally Baire ($\iff$ compact metrizable-universally Baire by Theorem \ref{thm:diagram2}) but not topological-universally Baire ($\iff$ compact-universally Baire by Theorem \ref{thm:diagram2}).
\end{ex}

\begin{proof}
    Take a set $M \s [0,1]$ of size $\non(\mathcal{N})$ which is not Lebesgue measurable. As in some of the previous examples, by \cite[1C Proposition]{Fremlin} a Lebesgue nonmeasurable set cannot be $\topsp$.

    On the other hand, it is clear that any subset of $[0,1]$ with cardinality strictly less than $\non(\mathcal{M})$ must be universally meager (a Borel isomorphism does not change the cardinality of a set), hence $M$ is universally meager, which implies that $M$ is $\p$.
\end{proof}

In the final example we assume the existence of some large cardinals.

\begin{ex}\label{ex:twowoodincardinal}
    Suppose that there exist two Woodin cardinals, then there exists $A \s [0, 1]$ that is Polish-universally Baire ($\iff$ compact metrizable-universally Baire by Theorem \ref{thm:diagram2}) but not topological-universally Baire ($\iff$ compact-universally Baire by Theorem \ref{thm:diagram2}).
\end{ex}

\begin{proof}
    Let $H$ be a Hausdorff gap (for more, see \cite[Theorem 29.7]{Jech}). Formally, a Hausdorff gap is a special subset $H \s 2^\omega$ (or sometimes it is a subset of $\bs$) of size $\omega_1$. By \cite[Proposition 2.10. (b)]{Zak3}, a Hausdorff gap is countably perfectly meager and by \cite[Theorem 7, $(3) \Rightarrow (2) \Rightarrow (1)$]{Bartperf} every countably perfectly meager set is universally meager, hence a Hausdorff gap is universally meager. By a Borel isomorphism $f$ between $2^\omega$ and $[0,1]$ we obtain the set $A = f(H)$ of size $\omega_1$ where $A$ is universally meager, hence $A$ is $\p$.

    Suppose for a contradiction that $A$ is topological-universally Baire. The existence of two Woodin cardinals implies that every $\topsp$ set is determined by \cite[Theorem 5.4.]{FMW}. Now we consider the $\boldsymbol{^*}$-$\boldsymbol{game}$ for $A$ (for the definition and relevant results see \cite[Section 21.A]{Ke}), the winning set for \textit{I} is a continuous preimage of $A$ in some Polish space, so the winning set for \textit{I} is determined, therefore either \textit{I} or \textit{II} has a winning strategy in this game, hence $A$ has the perfect set property. Since $A$  is not countable nor contains a perfect set, we get a contradiction.
\end{proof}

\section{Universally Baire sets in \texorpdfstring{$\sigma$}--compact Polish spaces}\label{subsectionunivbaire2}

In this section we will prove that when $X$ is a $\sigma$-compact Polish space, the diagram simplifies.

\bt
\label{thm:diagram2}
For a subset of a $\sigma$-compact Polish space, the following implications hold:

\begin{center}
    \begin{tikzcd}[row sep = 0cm, column sep = 2cm]
     \fbox{
\begin{tabular}{c} topological-universally Baire \\
$\Updownarrow$
\\
Hausdorff-universally Baire \\
$\Updownarrow$
\\
$\{Y \mid Y \text{ has a regular open basis} \}$-universally Baire\\
$\Updownarrow$
\\ \text{ \v{C}ech-complete-universally Baire} \\
$\Updownarrow$
\\
metrizable-universally Baire\\
$\Updownarrow$
\\
completely metrizable-universally Baire\\
$\Updownarrow$

\\ $\{ \lambda^\omega \mid \lambda \text{ is a cardinal} \}$-universally Baire \\
$\Updownarrow$
\\ $\{ \lambda^\omega \mid \lambda \text{ is an infinite cardinal} \}$-universally Baire \\
$\Updownarrow$
\\ $\{ Y \mid Y \text{ is an ordered space} \}$-universally Baire \\
$\Updownarrow$
\\ compact-universally Baire\\
$\Updownarrow$
\\ compact Hausdorff-universally Baire \end{tabular}}
 \\
\Bigg\Downarrow
\\
\fbox{\begin{tabular}{c} Polish-universally Baire \\
$\Updownarrow$
\\ $\{ \mathbb{N^\omega} \}$-universally Baire \\
$\Updownarrow$
\\  compact metrizable-universally Baire \\
$\Updownarrow$
\\ $\{ 2^\omega \}$-universally Baire \end{tabular}}
    \end{tikzcd}
\end{center}

\smallskip
Moreover, under the Continuum Hypothesis the two notions in the above diagram are nonequivalent.
\et

We will prove Theorem \ref{thm:diagram2} in two steps. First, we will assume that the space $X$ is also compact, after that we will move towards the $\sigma$-compact case. The following claims and their proofs establish Theorem \ref{thm:diagram2}. We start with the top frame:

\begin{claim}\label{kompaktra ekvivalens a fremlim és komphaus}
Let $X$ be a compact metric space. For a subset $A \s X$, the following equivalence holds:
$$\ch \Longleftrightarrow \topsp.$$
\end{claim}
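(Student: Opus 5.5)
The direction from topological-universally Baire to compact Hausdorff-universally Baire is trivial, since the class of compact Hausdorff spaces is contained in the class of all topological spaces. So the content of the claim is the converse. By Claim \ref{top box notions are the same}, topological-universal Baireness is equivalent to \v{C}ech-complete-universal Baireness. It therefore suffices to show that, for compact metric $X$, a compact Hausdorff-universally Baire set $A \s X$ is \v{C}ech-complete-universally Baire.

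Fix a \v{C}ech-complete space $Y$ and a continuous $f : Y \to X$. Write $Y$ as a $G_\delta$ subspace $Y = \bigcap_n U_n$ of some compact Hausdorff space $K$. The natural move is to extend $f$ to a compact Hausdorff space. For this I will pass to the closure $\overline{Y}$ in the Stone--\v{C}ech compactification; equivalently, since $Y$ is Tychonoff, use $\beta Y$ itself. Because $X$ is compact Hausdorff, $f$ extends to a continuous $\tilde f : \beta Y \to X$.

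Since $Y$ is \v{C}ech-complete, it is a $G_\delta$ in $\beta Y$, and it is dense there. Hence $\beta Y \setminus Y$ is an $F_\sigma$ set, and each closed piece of it misses the dense set $Y$. Such a piece has empty interior, so it is nowhere dense. Thus $Y$ is comeager in $\beta Y$.

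By hypothesis $\tilde f^{-1}(A)$ has the property of Baire in $\beta Y$. Intersecting with the $G_\delta$ (in particular Borel) set $Y$, the set $\tilde f^{-1}(A) \cap Y = f^{-1}(A)$ has the property of Baire in $\beta Y$. It remains to pass from $\beta Y$ to the subspace $Y$. Write $f^{-1}(A) = (\tilde B \cap Y) \cup M$ with $\tilde B$ Borel in $\beta Y$ and $M \s Y$ meager in $\beta Y$; then $\tilde B \cap Y$ is Borel in $Y$.

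The main point to check is that a set $M \s Y$ that is meager in $\beta Y$ is also meager in $Y$. Let $N$ be a closed nowhere dense subset of $\beta Y$. For any nonempty open $W \cap Y$, the set $W$ is nonempty open in $\beta Y$, so it contains a nonempty open $V$ missing $N$. Then $V \cap Y$ is nonempty by density of $Y$, and it misses $N \cap Y$. Hence $N \cap Y$ is nowhere dense in $Y$, so $M$ is meager in $Y$ and $f^{-1}(A)$ has the property of Baire in $Y$.

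This is the step I expect to need care; density of $Y$ in $\beta Y$ is essential, which is why I use $\beta Y$ rather than an arbitrary compactification $K$. Compactness of $X$ is used only to guarantee the extension $\tilde f$. This is also why the general $\sigma$-compact case is deferred to a separate step.
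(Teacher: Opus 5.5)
Your argument is correct, but it is not the paper's route. The paper proves the nontrivial direction in one line by citing \cite[1E Theorem]{Fremlin}, which directly gives that a compact Hausdorff-universally Baire subset of a compact metric space is topological-universally Baire. You split the implication into two steps. The first is an elementary Stone--\v{C}ech step: extend $f$ to $\tilde f:\beta Y\to X$ using compactness of $X$, then pass the property of Baire back to the dense subspace $Y$; this shows that compact Hausdorff-universally Baire implies \v{C}ech-complete-universally Baire. The second is an appeal to Claim~\ref{top box notions are the same} for \v{C}ech-complete $\Rightarrow$ topological.

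There is no circularity, since Claim~\ref{top box notions are the same} does not depend on Theorem~\ref{thm:diagram2}. Note, however, that its step (4)$\Rightarrow$(1) is itself proved by invoking Fremlin's 1E in the compact case. So your proof does not remove the dependence on Fremlin. What it does is isolate the part of that theorem that is really needed and make explicit where compactness of $X$ enters.

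Two small remarks. First, the facts that $Y$ is $G_\delta$ and comeager in $\beta Y$ are never actually used. The trace on a dense subspace of a set with the property of Baire again has the property of Baire, so you can write $\tilde f^{-1}(A)=B\cup M'$ in $\beta Y$ and intersect with $Y$ directly. Your argument therefore gives the property of Baire of $f^{-1}(A)$ for every Tychonoff $Y$. Second, the reason to use $\beta Y$ rather than the closure of $Y$ in $K$ is the extension property of $f$, not density, since $Y$ is dense in its closure in $K$ as well.
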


\bp
It is clear that if the set $A$ is $\topsp$, then $A$ is also $\ch$. The other direction of the implication follows from \cite[1E Theorem]{Fremlin}.
\ep

We will now weaken the compactness assumption:

\begin{claim}\label{szigmakompaktra ekvivalencia fremlinre}
Let $X$ be a $\sigma$-compact Polish space. For a subset $A \s X$, the following equivalence holds:
$$
\ch \Longleftrightarrow \topsp.
$$
\end{claim}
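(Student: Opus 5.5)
The implication from topological-universally Baire to compact Hausdorff-universally Baire is trivial, because the class of compact Hausdorff spaces is contained in the class of all topological spaces. So we only need the converse. The plan is to reduce to the compact case, Claim \ref{kompaktra ekvivalens a fremlim és komphaus}. We apply it to each compact piece of $X$ and then glue the pieces together. The gluing will be done in \v{C}ech-complete domains, since Claim \ref{top box notions are the same} shows that \v{C}ech-complete-universally Baire is equivalent to topological-universally Baire.

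Write $X=\bigcup_{n\in\omega}K_n$ with each $K_n$ compact. Fix $n$ and put $A_n:=A\cap K_n$.

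First we show that $A_n$ is compact Hausdorff-universally Baire in the compact metric space $K_n$. Let $Y$ be compact Hausdorff and $f:Y\to K_n$ continuous. We may regard $f$ as a map into $X$, and then $f^{-1}(A_n)=f^{-1}(A)$, which has the property of Baire by assumption.

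By Claim \ref{kompaktra ekvivalens a fremlim és komphaus}, $A_n$ is then topological-universally Baire in $K_n$. Next we transfer this back to $X$. Let $Y$ be an arbitrary topological space and $f:Y\to X$ continuous. The set $F_n:=f^{-1}(K_n)$ is closed in $Y$, and $g_n:=f\vert_{F_n}:F_n\to K_n$ is continuous. Hence $g_n^{-1}(A_n)=f^{-1}(A)\cap F_n$ has the property of Baire in $F_n$. Since $F_n$ is closed, it has the property of Baire in $Y$, so Lemma \ref{bairetultranz} shows that $f^{-1}(A)\cap F_n$ has the property of Baire in $Y$.

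Finally,
$$f^{-1}(A)=\bigcup_{n\in\omega}\bigl(f^{-1}(A)\cap F_n\bigr),$$
a countable union of sets with the property of Baire, so it has the property of Baire. Hence $A$ is topological-universally Baire.

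This argument actually never needed the \v{C}ech-complete detour, because Claim \ref{kompaktra ekvivalens a fremlim és komphaus} already yields full topological-universal Baireness of each $A_n$ in $K_n$.

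The one step that needs care is Lemma \ref{bairetultranz}. Its proof uses only that meager subsets of a subspace $F\subseteq Y$ are meager in $Y$, and that Borel subsets of $F$ are traces of Borel subsets of $Y$. Both facts hold for arbitrary subspaces of arbitrary spaces: a set nowhere dense in $F$ is nowhere dense in $Y$, since its closure in $Y$ is contained in $\overline{F}$ and is nowhere dense there. So the lemma applies to the closed sets $F_n$ of an arbitrary, possibly non-Hausdorff, $Y$, and the argument is complete.
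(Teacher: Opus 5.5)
Your proof is correct and follows the paper's argument essentially step for step. You cover $X$ by compact sets $K_n$, upgrade $A\cap K_n$ to topological-universal Baireness in $K_n$ via the compact case, and glue using the closed preimages $f^{-1}(K_n)$, Lemma~\ref{bairetultranz}, and countable unions. The differences are cosmetic: you check directly that $A\cap K_n$ is compact Hausdorff-universally Baire in $K_n$, where the paper cites its restriction lemma, and, as you noticed yourself, the announced \v{C}ech-complete detour plays no role.
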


\bp
It is clear that if the set $A$ is $\topsp$, then $A$ is also $\ch$, so we need to prove the other direction.

Suppose that $A$ is $\ch$ and our goal is to derive that $A$ is $\topsp$. Let $Y$ be a topological space and take an arbitrary continuous $f:Y \longrightarrow X$, we need to check that $f^{-1}(A)$ has the property of Baire in $Y$.

Let $X=\bigcup\limits_{n \in \omega} C_n$, where all the $C_n$-s are compact subspaces of $X$. We can assume that all the $C_n$-s are nonempty. First note that by Lemma \ref{bővebbenuniveakkorkisebbenis} the sets
$C_n \cap A$ are \ch \; in $C_n$, hence by Claim \ref{kompaktra ekvivalens a fremlim és komphaus} we have that $C_n \cap A$ is \topsp \; in $C_n$. 

Fix $n$ and let $f_n$ denote the restriction of $f$ to $f^{-1}(C_n)$, then  $f_n^{-1}(C_n \cap A) = f^{-1}(C_n) \cap f^{-1}(A)$ has the property of Baire in $f^{-1}(C_n)$.  Since the set $C_n$ is a closed subspace of $X$, the set $f^{-1}(C_n)$ is a closed subspace of $Y$, hence $f^{-1}(C_n)$ has the property of Baire in $Y$ and by Lemma \ref{bairetultranz} the set $f^{-1}(C_n) \cap f^{-1}(A)$ has the property of Baire in $Y$. The sets with the property of Baire form a $\sigma$-algebra, therefore $f^{-1}(A) = \bigcup\limits_{n \in \omega} (f^{-1}(C_n) \cap f^{-1}(A))$ has the property of Baire in $Y$.
\ep

To see that there is no implication from the bottom frame to the top frame in ZFC, that is, $\p$ $\Rightarrow$ $\topsp$ is not provable in $ZFC$, even when the space $X$ is compact, we refer to Example \ref{ex:3}. 

Finally, we will prove that the notions in the bottom frame of the diagram in Theorem \ref{thm:diagram2} are equivalent, we start with the case where $X$ is compact metric:

\begin{claim}\label{thm:kompaktban CM=P}
Let $X$ be a compact metric space. For a subset $A \s X$, the following equivalence holds:
$$\p \Longleftrightarrow \cm.$$
\end{claim}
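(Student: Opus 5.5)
The plan is to prove only the nontrivial direction, compact metrizable-universally Baire $\Rightarrow$ Polish-universally Baire; the converse holds because compact metric spaces are Polish. By Claim \ref{thm: polish=omega} it suffices to show that $A$ is $\n$. So we fix a continuous $f:\bs \longrightarrow X$ and aim to show that $f^{-1}(A)$ has the property of Baire in $\bs$. The idea is to replace $\bs$ by the graph of $f$ and compactify that graph, using compactness of $X$.

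First we embed $\bs$ into a compact metrizable space $K$, for example $K=2^\omega$ or $[0,1]^\omega$. Let $G=\{(y,f(y)) \mid y\in \bs\} \s K\times X$ be the graph of $f$, and let $\overline{G}$ be its closure in $K\times X$. Since $X$ is compact metric, $\overline{G}$ is a compact metrizable space. The projection $\pi_X:\overline{G}\longrightarrow X$ is continuous, so by the hypothesis on $A$ the set $P:=\pi_X^{-1}(A)\cap \overline{G}$ has the property of Baire in $\overline{G}$.

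Next, since $f$ is continuous, the map $y\mapsto (y,f(y))$ is a homeomorphism $\bs\to G$ whose inverse is $\pi_K\vert_G$. Hence $G$ is Polish, and therefore a $G_\delta$ subset of $\overline{G}$; it is dense there by construction. Under this homeomorphism, $P\cap G$ corresponds exactly to $f^{-1}(A)$. So it remains to check that $P\cap G$ has the property of Baire in $G$.

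This last step is the main (though mild) obstacle: we need that the property of Baire passes from a space $Z$ to a dense $G_\delta$ subspace $D$. Write $P=U\triangle M$ with $U$ open and $M$ meager in $\overline{G}$. The trace $U\cap G$ is open in $G$. For meagerness it suffices to treat nowhere dense sets: if $N$ is closed nowhere dense in $\overline{G}$, then $N\cap G$ is closed in $G$. Its interior in $G$ is empty, because any nonempty relatively open $W\cap G\s N$ would give $W \s \overline{W\cap G} \s N$ by density of $G$, contradicting that $N$ is nowhere dense. Hence $M\cap G$ is meager in $G$. Then $P\cap G=(U\cap G)\triangle(M\cap G)$ has the property of Baire in $G$, and transferring back along the homeomorphism with $\bs$ finishes the proof. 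Note that only density of $G$ is really needed in this step; the $G_\delta$ property is not used.
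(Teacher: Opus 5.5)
Your proof is correct and is essentially the paper's argument. The paper likewise compactifies the graph of $f$ (realized in the Hilbert cube via $y\mapsto(f_0(y),g_0(y),f_1(y),g_1(y),\dots)$), applies the hypothesis to the coordinate projection on the closure, and restricts back to the dense copy of the domain; the only differences are that your preliminary reduction to $\bs$ via Claim \ref{thm: polish=omega} is unnecessary (any Polish $Y$ embeds in a compact metrizable space), and that you explicitly verify the final restriction step, where your remark that density alone suffices is accurate.
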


\begin{proof} It is clear that if the set $A$ is $\p$, then $A$ is compact metrizable-universally Baire, so we need to prove the other direction.

Suppose that the set $A$ is $\cm$ and our goal is to prove that $A$ is $\p$. Take an arbitrary Polish space $Y$  and an arbitrary continuous $f:Y \longrightarrow X$, we need to check that $f^{-1}(A)$ has the property of Baire in $Y$. 

First, we identify $X$ with a compact (equivalently a closed) subspace of the Hilbert cube $[0,1]^\omega$ (this is possible by a well-known result, see \cite[Theorem 4.14.]{Ke}). Then the function $f$ has the form $f=(f_n)_{n \in \omega}$, where $f_n$ is a continuous function $Y \to [0,1]$ for every $n$.

We choose a metric $d$ on $Y$ compatible with its topology with $d \leq 1$. Let $(y_n)_{n \in \omega}$ be dense in $Y$ and let $g_n$ denote the continuous function 
$$g_n:Y \to [0,1], \; g_n(y):=d(y,y_n).$$

It is easy to see that the $g_n$-s separate the points, that is $\forall y \neq y'\in Y \; \exists n$ such that $g_n(y) \neq g_n(y')$. We consider the following embedding of the space $Y$ into the Hilbert cube $\imath: Y \to [0,1]^\omega$:
$$
\imath: \; y \mapsto (f_0(y), g_0(y), f_1(y),g_1(y), \dots ).
$$
It is easy to see that $\imath$ is really an embedding (or see \cite[Theorem 4.14.]{Ke}), so from now on we identify $Y$ with $\imath(Y)$. 

After the previous identifications, we can see that $f$ takes the point  $(f_0(y), g_0(y), f_1(y),g_1(y), \dots )$ to $(f_0(y),f_1(y), \dots )$, in other words, $f$ is simply a projection on the even coordinates. So we can extend $f$ to a $\tilde{f}:[0,1]^\omega \to [0,1]^\omega$ by the formula
$$
\tilde{f}: \; (x_0,y_0,x_1, y_1, \dots ) \mapsto (x_0,x_1, \dots )
$$
which is clearly continuous. By the continuity of $\tilde{f}$ we have that $\tilde{f}(\overline{Y}) \subseteq \overline{f(Y)} \subseteq X$, where for the latter containment we used the fact that $X$ is closed in $[0,1]^\omega$.

We will denote by $h$ the restriction of the function $\tilde{f}$ to $\overline{Y}$. Since $\overline{Y}$ is compact and $h: \overline{Y} \longrightarrow X$ is continuous and $A$ is $\cm$, we have that $h^{-1}(A)$ has the property of Baire in $\overline{Y}$ and by the fact that $Y$ is comeager in $\overline{Y}$ we have that $h^{-1}(A) \cap Y = f^{-1}(A)$ has the property of Baire in $Y$, which finishes the proof.
\end{proof}

From compact Polish spaces we can easily go to $\sigma$-compact Polish spaces:

\begin{claim}\label{claim:szigmakompaktrajobboldal ekvi}
Let $X$ be a $\sigma$-compact Polish space. For a subset $A \s X$, the following equivalence holds:
$$\p \Longleftrightarrow \cm.$$
\end{claim}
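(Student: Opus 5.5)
The forward implication is trivial, because compact metrizable spaces are Polish. For the converse, suppose $A$ is compact metrizable-universally Baire in the $\sigma$-compact Polish space $X$. Write $X=\bigcup_{n\in\omega} C_n$ with each $C_n$ compact and nonempty. The plan is to reduce to the compact case, Claim \ref{thm:kompaktban CM=P}, exactly as Claim \ref{szigmakompaktra ekvivalencia fremlinre} was reduced to Claim \ref{kompaktra ekvivalens a fremlim és komphaus}.

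First, by Lemma \ref{bővebbenuniveakkorkisebbenis}, the set $A\cap C_n$ is compact metrizable-universally Baire in $C_n$. Indeed, $A \cap C_n$ is compact metrizable-universally Baire in $X$, being the intersection of $A$ with a closed set. This uses that the $\mathcal{F}$-universally Baire sets form a $\sigma$-algebra containing the Borel sets. Since $C_n$ is a compact metric space, Claim \ref{thm:kompaktban CM=P} then gives that $A\cap C_n$ is Polish-universally Baire in $C_n$.

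Now fix a Polish space $Y$ and a continuous $f:Y\to X$. The set $f^{-1}(C_n)$ is closed in $Y$, hence Polish, and the restriction $f_n$ of $f$ to $f^{-1}(C_n)$ is a continuous map into $C_n$. So $f_n^{-1}(A\cap C_n)=f^{-1}(A)\cap f^{-1}(C_n)$ has the property of Baire in $f^{-1}(C_n)$. A closed set has the property of Baire in $Y$, so Lemma \ref{bairetultranz} shows that $f^{-1}(A)\cap f^{-1}(C_n)$ has the property of Baire in $Y$. Taking the countable union over $n$, and using that sets with the property of Baire form a $\sigma$-algebra, $f^{-1}(A)$ has the property of Baire in $Y$.

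There is no real obstacle here. The one point to check is that the domain restriction stays inside the class of Polish spaces, and it does, since closed subspaces of Polish spaces are Polish.
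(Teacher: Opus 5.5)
Your proposal is correct and follows the paper's proof essentially step by step: you cover $X$ by compact sets $C_n$, pass to $C_n$ via Lemma~\ref{bővebbenuniveakkorkisebbenis}, apply the compact case Claim~\ref{thm:kompaktban CM=P}, and glue the closed pieces $f^{-1}(C_n)$ with Lemma~\ref{bairetultranz}. Your extra remark that $A\cap C_n$ is first compact metrizable-universally Baire in $X$ fills in a small detail the paper leaves implicit, and it is right.
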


\begin{proof}
It is clear that if the set $A$ is $\p$, then $A$ is compact metrizable-universally Baire, so we need to prove the other direction.

Suppose that the set $A$ is $\cm$ and our goal is to prove that $A$ is $\p$. Take an arbitrary Polish space $Y$  and an arbitrary continuous $f:Y \longrightarrow X$, we need to check that $f^{-1}(A)$ has the property of Baire in $Y$.

Let $X=\bigcup\limits_{n \in \omega} C_n$, where all the $C_n$-s are compact subspaces of $X$ (we can assume that every $C_n$ is nonempty). By Lemma \ref{bővebbenuniveakkorkisebbenis} we have that $ C_n \cap A$ is a compact metrizable-universally Baire set in $C_n$ for every $n$, so by Claim \ref{thm:kompaktban CM=P} the set $\; C_n \cap A$ is also Polish-universally Baire in $C_n$. Let $f_n$ denote the restriction of $f$ to $f^{-1}(C_n)$. Since $f^{-1}(C_n)$ is a closed set in $Y$, it is a Polish subspace. Since $C_n \cap A$ is Polish-universally Baire, $f_n^{-1}(C_n \cap A)=f^{-1}(C_n) \cap f^{-1}(A)$ has the property of Baire in $f^{-1}(C_n)$, hence by Lemma \ref{bairetultranz} the set $f^{-1}(C_n) \cap f^{-1}(A)$ has the property of Baire in $Y$, so $f^{-1}(A)= \bigcup\limits_{n \in \omega} (f^{-1}(C_n) \cap f^{-1}(A))$ has the property of Baire in $Y$.
\end{proof}

With the proofs of Claim \ref{kompaktra ekvivalens a fremlim és komphaus} - \ref{claim:szigmakompaktrajobboldal ekvi} the proof of Theorem \ref{thm:diagram2} is complete.

\customsection{Definitions using Borel and Baire measurable preimages}{Definitions using Borel and Baire measurable preimages}{Borel and Baire measurable preimages}

In this section we replace continuous preimages in the definition of $\mathcal{F}$-universally Baire by Borel and Baire measurable preimages. Besides being natural generalizations, we would like to point out that historically the first definition by Christensen \cite{Chr} also used Borel preimages.
\bigskip

Let us start with the Borel version of Definition \ref{def:univBaire}.

\begin{definition}
Let $\mathcal{F}$ be a class of topological spaces and $A \subseteq X$. The set $A$ is called \emph{$\mathcal{F}$-Borel-universally Baire}, if $f^{-1}(A)$ has the property of Baire for every $Y \in \mathcal{F}$ and for every Borel function $f: Y \longrightarrow X$.
\end{definition}

\begin{definition}
Let $\mathcal{F}$ be a class of topological spaces and $A \subseteq X$. The set $A$ is called \emph{$\mathcal{F}$-Baire-universally Baire}, if $f^{-1}(A)$ has the property of Baire for every $Y \in \mathcal{F}$ and for every Baire measurable function $f: Y \longrightarrow X$.
\end{definition}

The following easy lemma will show us that the two definitions are equivalent:

\begin{lemma}\label{lemma:Borel=Baire}
    For a subset $A \s X$, the following equivalence holds:
    $$
    \mathcal{F}\text{-Borel-universally Baire} \Longleftrightarrow \mathcal{F}\text{-Baire-universally Baire}.
    $$
\end{lemma}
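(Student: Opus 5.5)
The backward implication is immediate, since every Borel function $f:Y\to X$ is Baire measurable: preimages of open sets are Borel, hence have the property of Baire. Thus every $\mathcal{F}$-Baire-universally Baire set is $\mathcal{F}$-Borel-universally Baire. The content is in the forward direction.

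For the forward direction, fix $Y\in\mathcal{F}$ and a Baire measurable $f:Y\to X$. Assume $A$ is $\mathcal{F}$-Borel-universally Baire. The plan is to replace $f$ by a Borel function $g:Y\to X$ that agrees with $f$ off a meager set $M\subseteq Y$.

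\begin{itemize}
\item Since $X$ is Polish, it is second countable. Fix a countable open basis $\{U_n\}_{n\in\omega}$.
\item Each $f^{-1}(U_n)$ has the property of Baire, so we may write $f^{-1}(U_n)=B_n\triangle M_n$ with $B_n$ Borel in $Y$ and $M_n$ meager. Since $M_n$ is contained in a countable union of closed nowhere dense sets, we may assume $M_n$ is a meager $F_\sigma$ set.
\item Put $M=\bigcup_n M_n$, a meager Borel ($F_\sigma$) set. On $Y\setminus M$, the preimage of each basic open set under $f\vert_{Y\setminus M}$ equals $B_n\setminus M$, which is Borel. Since every open set of $X$ is a countable union of basic sets, $f\vert_{Y\setminus M}$ is Borel measurable.
\item Define $g=f$ on $Y\setminus M$ and $g\equiv x_0$ on $M$ for a fixed $x_0\in X$ (if $X=\emptyset$ the statement is trivial). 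Then for open $U\subseteq X$, $g^{-1}(U)=(f\vert_{Y\setminus M})^{-1}(U)\cup(M$ or $\emptyset)$, which is Borel. So $g$ is Borel.
\end{itemize}

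By hypothesis $g^{-1}(A)$ has the property of Baire in $Y$. Moreover $f^{-1}(A)\triangle g^{-1}(A)\subseteq M$, which is meager. Since the sets with the property of Baire form a $\sigma$-algebra containing the meager sets, $f^{-1}(A)$ also has the property of Baire.

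The one point needing care is that $Y$ is an arbitrary topological space, possibly non-Hausdorff. The argument only uses two facts that hold in any space: a meager set is contained in a meager $F_\sigma$ set (closures of nowhere dense sets are nowhere dense), and the property of Baire gives a Borel set modulo a meager set, as recalled in the preliminaries. Neither depends on the separation properties of $Y$. The only feature of $X$ used is its countable basis.
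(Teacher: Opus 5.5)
Your proof is correct and follows the same route as the paper: replace $f$ by a Borel $g$ that agrees with $f$ off a meager set, then use that $f^{-1}(A)\triangle g^{-1}(A)$ is meager. The paper only asserts from second countability of $X$ that such a $g$ exists, whereas you construct it explicitly.

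One wording fix: you cannot in general make $f^{-1}(U_n)\triangle B_n$ itself an $F_\sigma$ set, since that would force $f^{-1}(U_n)$ to be Borel. It suffices to let $M_n$ be a meager $F_\sigma$ superset of it, because your later steps only use the containment $f^{-1}(U_n)\triangle B_n\subseteq M$.
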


\begin{proof}
    It is clear that if $A$ is $\mathcal{F}\text{-Baire-universally Baire}$ then $A$ is also $\mathcal{F}$-Borel-universally Baire. To see the converse direction, we assume that $A$ is $\mathcal{F}\text{-Borel-universally Baire}$ and we consider a space $Y \in \mathcal{F}$ and an arbitrary Baire measurable function $f:Y \longrightarrow X$. Since $X$ is second countable, there exists a comeager subset $\tilde{Y} \s Y$ and a Borel function $g:Y \longrightarrow X$ such that $f\vert_{\tilde{Y}}=g\vert_{\tilde{Y}}$. By assumption, the set $g^{-1}(A)$ has the property of Baire in $Y$ and $f^{-1}(A) \vartriangle g^{-1}(A) \s Y \setminus \tilde{Y}$, hence we conclude that the set $f^{-1}(A)$ has the property of Baire.
\end{proof}

As opposed to the case of continuous functions, for Borel/Baire functions compactness will no longer play any role, hence the four notions will collapse to two.

\begin{theorem}\label{Borel preimages theorem}
For a subset of a Polish space, the following implications hold:

\begin{center}
    \begin{tikzcd}[row sep = 0cm, column sep = 2cm]
     \fbox{
\begin{tabular}{c}
topological-universally Baire \\
$\Updownarrow$
\\
topological-Borel/Baire-universally Baire \\
$\Updownarrow$
\\
Hausdorff-Borel/Baire-universally Baire \\
$\Updownarrow$
\\
$\{Y \mid Y \text{ has a regular open basis} \}$-Borel/Baire-universally Baire\\
$\Updownarrow$
\\ \text{ \v{C}ech-complete-Borel/Baire-universally Baire} \\
$\Updownarrow$
\\
metrizable-Borel/Baire-universally Baire\\
$\Updownarrow$
\\
completely metrizable-Borel/Baire-universally Baire\\
$\Updownarrow$

\\ $\{ \lambda^\omega \mid \lambda \text{ is a cardinal} \}$-Borel/Baire-universally Baire \\
$\Updownarrow$
\\ $\{ \lambda^\omega \mid \lambda \text{ is an infinite cardinal} \}$-Borel/Baire-universally Baire \\
$\Updownarrow$
\\ $\{ Y \mid Y \text{ is an ordered space} \}$-Borel/Baire-universally Baire \\
$\Updownarrow$
\\ compact-Borel/Baire-universally Baire\\
$\Updownarrow$
\\ compact Hausdorff-Borel/Baire-universally Baire
\end{tabular}}
 \\
\Bigg\Downarrow
\\
\fbox{\begin{tabular}{c}
Polish-universally Baire \\
$\Updownarrow$
\\ 
Polish-Borel/Baire-universally Baire \\
$\Updownarrow$
\\ $\{ \mathbb{N^\omega} \}$-Borel/Baire-universally Baire \\
$\Updownarrow$
\\  compact metrizable-Borel/Baire-universally Baire \\
$\Updownarrow$
\\ $\{ 2^\omega \}$-Borel/Baire-universally Baire
\end{tabular}}
    \end{tikzcd}
\end{center}

\smallskip
Moreover, consistently the two notions in the above diagram are nonequivalent.
\end{theorem} 

In the following claims we will show that all of the notions in the top frame are equivalent. By Lemma \ref{lemma:Borel=Baire} we know that for a subset $A\s X$ we have that $A$ is $\mathcal{F}\text{-Borel-universally Baire} $ if and only if $ A$ is $\mathcal{F}\text{-Baire-universally Baire}$, hence we will only prove the equivalence of the corresponding $\mathcal{F}\text{-Borel-universally Baire}$ notions.

\begin{claim}\label{topological=topological Borel}
For a subset $A \s X$, the following equivalence holds:
$$ \topsp \Longleftrightarrow \text{topological-Borel-universally Baire}.$$
\end{claim}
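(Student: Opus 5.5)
The implication from topological-Borel-universally Baire to topological-universally Baire is immediate, because every continuous function is Borel. The work is in the converse. So assume $A \s X$ is topological-universally Baire, fix an arbitrary topological space $Y$ and a Borel function $f: Y \longrightarrow X$; we must show that $f^{-1}(A)$ has the property of Baire in $Y$.

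The plan is to turn $f$ into a continuous function by refining the topology of $Y$. Fix a countable base $\{U_n\}_{n\in\omega}$ of $X$. Each $f^{-1}(U_n)$ is a Borel subset of $Y$. Let $\tau'$ be the topology on $Y$ generated by the original topology $\tau$ together with the sets $f^{-1}(U_n)$. Then $f:(Y,\tau') \longrightarrow X$ is continuous, and $(Y,\tau')$ is a topological space. So by hypothesis $f^{-1}(A)$ has the property of Baire in $(Y,\tau')$. Since $\tau'$ contains $\tau$, this argument applies to every topological space, not only Hausdorff ones.

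The main obstacle is transferring the property of Baire from $(Y,\tau')$ back to $(Y,\tau)$. For this it suffices to show two things:
\begin{itemize}
\item every $\tau'$-open set has the property of Baire with respect to $\tau$;
\item every $\tau'$-meager set is $\tau$-meager.
\end{itemize}
Adjoining arbitrary Borel sets does not preserve category in general, so I would not adjoin the sets $f^{-1}(U_n)$ directly. Instead I would first replace each Borel set by something better behaved. Every $\tau$-Borel set $B$ has the property of Baire, so $B = W \triangle M$ with $W$ $\tau$-open and $M$ $\tau$-meager. Hence there is a $\tau$-meager set $N = \bigcup_n N_n$, each $N_n$ closed nowhere dense, such that on $Y\setminus N$ every $f^{-1}(U_n)$ agrees with a $\tau$-open set. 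If $N$ is not closed under these operations, I would close it under countably many steps.

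Then $f\vert_{Y\setminus N}$ is continuous for the subspace topology $\tau\vert_{Y\setminus N}$. Applying topological-universal Baireness to the space $(Y\setminus N,\tau\vert_{Y\setminus N})$ and the continuous map $f\vert_{Y\setminus N}$ gives that $f^{-1}(A)\setminus N$ has the property of Baire in $Y\setminus N$. To transfer this to $Y$, I use that a set comeager in $Y$ is dense in the relevant sense. Concretely, I would check that a set nowhere dense in $Y\setminus N$ is nowhere dense in $Y$, and that open subsets of $Y\setminus N$ are traces of $\tau$-open sets. Together these give that $f^{-1}(A)\setminus N$ has the property of Baire in $Y$, in the spirit of Lemma \ref{bairetultranz}, though $Y\setminus N$ need not itself have the property of Baire a priori; here it does, being co-meager. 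Finally, $f^{-1}(A)\cap N$ is meager, so $f^{-1}(A)$ has the property of Baire.

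If the delicate point about meagerness in the comeager subspace causes trouble (it is standard that nowhere dense subsets of a dense subspace are nowhere dense in the whole space, and $Y\setminus N$ is dense in every nonempty open set that is nonmeager), I would instead localize to the open set on which $Y$ is of second category and argue there. The complement of that open set is meager, hence harmless.
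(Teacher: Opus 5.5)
Your final argument is correct and is essentially the paper's proof. The paper quotes \cite[Theorem 8.38]{Ke} to obtain a comeager $G_\delta$ set $\tilde{Y}$ on which $f$ is continuous, applies topological-universal Baireness to $f\vert_{\tilde{Y}}$, transfers the property of Baire back to $Y$ via Lemma \ref{bairetultranz}, and discards the meager set $Y\setminus\tilde{Y}$; you instead prove that instance of Theorem 8.38 by hand, by removing the meager $F_\sigma$ set $N$ covering the sets $f^{-1}(U_n)\triangle W_n$. The abandoned refinement of $\tau$, the remark about closing $N$ under further steps, and the fallback localization paragraph are unnecessary and should be dropped; in particular, that paragraph's claim that $Y\setminus N$ is dense in every nonmeager open set is false in general.
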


\bp
If $A$ is topological-Borel-universally Baire then $A$ is clearly topological-universally Baire.

Conversely, suppose that $A$ is topological-universally Baire and take an arbitrary topological space $Y$ with a Borel function $f:Y \longrightarrow X$, we need to show that $f^{-1}(A)$ has the property of Baire. If $Y$ is meager in itself, then each of its subsets has the property of Baire so we can assume that $Y$ is not meager. By \cite[Theorem 8.38.]{Ke} there is a comeager $G_\delta$ subspace $\tilde{Y}$ such that the restriction of $f$ to $\tilde{Y}$ is continuous, let us denote this restriction by $g$. Then $g^{-1}(A)$ has the property of Baire in $\tilde{Y}$, so by Lemma \ref{bairetultranz} the set $g^{-1}(A)$ has the property of Baire in $Y$. Since $f^{-1}(A)=g^{-1}(A) \cup (f^{-1}(A) \cap (Y \setminus \tilde{Y}))$ and $Y \setminus \tilde{Y}$ is meager, $f^{-1}(A)$ has the property of Baire. 
\ep

\begin{claim}\label{topborel=compactborel}
For a subset $A \s X$, the following 
equivalence holds:
$$ \hspace{-3pt} \text{topological-Borel-universally Baire} \Leftrightarrow \text{compact Hausdorff-Borel-universally Baire}.$$
\end{claim}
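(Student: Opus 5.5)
One direction is immediate: compact Hausdorff spaces are topological spaces, so topological-Borel-universally Baire implies compact Hausdorff-Borel-universally Baire. For the converse, assume $A$ is compact Hausdorff-Borel-universally Baire. By Claim \ref{topological=topological Borel} it suffices to show that $A$ is topological-universally Baire. By Claim \ref{top box notions are the same} this reduces further to showing that $A$ is $\{\lambda^\omega \mid \lambda \text{ is an infinite cardinal}\}$-universally Baire. So we fix an infinite cardinal $\lambda$ and a continuous $f:\lambda^\omega \to X$, and we aim to show that $f^{-1}(A)$ has the property of Baire.

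The idea is to embed $\lambda^\omega$ as a comeager subspace of a compact Hausdorff space. Then $f$ extends Borel-measurably to the compact space, with the Borel structure of the extension taking care of the part outside the image. Let $\lambda^+ := \lambda \cup \{\infty\}$ be the one-point compactification of the discrete space $\lambda$, and let $K := (\lambda^+)^\omega$, which is compact Hausdorff by Tychonoff. The space $\lambda^\omega$ sits in $K$ as $\bigcap_n \{x \mid x(n) \neq \infty\}$. Each set $\{x \mid x(n)=\infty\}$ is closed, and it is nowhere dense: any basic open set restricts finitely many coordinates, and $\{\infty\}$ is not open in $\lambda^+$, so we can shrink the basic set so that coordinate $n$ avoids $\infty$. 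Hence $\lambda^\omega$ is a dense $G_\delta$ in $K$, so it is comeager and Borel. Fix $x_0 \in X$ and define $F:K\to X$ by $F=f$ on $\lambda^\omega$ and $F\equiv x_0$ on $K\setminus\lambda^\omega$. For open $U\subseteq X$, the set $F^{-1}(U)$ is a relatively open subset of the $G_\delta$ set $\lambda^\omega$, possibly together with the Borel complement. Either way it is Borel in $K$, so $F$ is Borel.

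By hypothesis $F^{-1}(A)$ has the property of Baire in $K$. We then restrict to $\lambda^\omega$. The set $F^{-1}(A)\cap\lambda^\omega = f^{-1}(A)$ has the property of Baire relative to the subspace $\lambda^\omega$. This holds because $\lambda^\omega$ is dense, so the subspace topology behaves well: the intersection of an open set of $K$ with $\lambda^\omega$ is open, and a set nowhere dense in $K$ meets the dense subspace $\lambda^\omega$ in a set nowhere dense in $\lambda^\omega$ (an open $V\cap\lambda^\omega$ with $V$ missing the closure also misses it). This proves the claim.

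I expect the main obstacle to be verifying these topological details carefully. The points to check are that meagerness transfers down to the dense subspace, which it does because nowhere dense sets restrict to nowhere dense sets in dense subspaces, and that the Borel extension is legitimate. If the detour through Claim \ref{top box notions are the same} seems unappealing, the same one-point/Stone–Čech trick should work directly for an arbitrary \v{C}ech-complete $Y$. There one embeds $Y$ as a dense $G_\delta$ in $\beta Y$ and uses item \ref{4} of that claim instead.
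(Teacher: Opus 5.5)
Your proof is correct and uses the same mechanism as the paper. The domain is embedded as a dense $G_\delta$ in a compact Hausdorff space, $f$ is extended by a constant on the remainder to get a Borel map, and the Baire property of the preimage is pulled back to the dense subspace. The only difference is the target class. The paper works with an arbitrary \v{C}ech-complete $Y$ and a Hausdorff compactification of it, which is exactly the alternative you mention at the end, via item \ref{4} of Claim \ref{top box notions are the same}. You specialize to $\lambda^\omega\subseteq(\lambda\cup\{\infty\})^\omega$, which replaces the general fact that \v{C}ech-complete spaces are $G_\delta$ in their compactifications by an explicit check, at the cost of relying on the longer chain $\ref{8}\Rightarrow\ref{1}$ of that claim.
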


\begin{proof}

It is clear that if $A$ is topological-Borel-universally Baire then $A$ is compact Hausdorff-Borel-universally Baire. 

Let us suppose that $A$ is compact Hausdorff-Borel-universally Baire. By Claim \ref{topological=topological Borel} it is enough to prove that $A$ is topological-universally Baire. To prove that $A$ is topological-universally Baire, it is enough to show by Theorem \ref{thm:diagram1} that $A$ is \v{C}ech-complete-universally Baire. Consider an arbitrary \v{C}ech-complete space $Y$ and an arbitrary continuous $f:Y \longrightarrow X$, we need to show that $f^{-1}(A)$ has the property of Baire. We can suppose that $A \neq X$, so let us fix an $a \in X \setminus A$. Let $\tilde{Y}$ be a Hausdorff compactification of $Y$ and define the function

\begin{equation*}
g(y):=
\begin{cases*}
f(y), &  if  $y \in Y,$ \\
a, &  if $y \in \tilde{Y} \setminus Y.$
\end{cases*}
\end{equation*}

Since $Y$ is \v{C}ech-complete, it is a $G_\delta$ subspace of $\tilde{Y}$, hence $g$ is a Borel function. The set $g^{-1}(A)$ has the property of Baire in $\tilde{Y}$ and using that $Y$ is a dense $G_\delta$ subspace of $\tilde{Y}$ we get that $f^{-1}(A)=g^{-1}(A) \cap Y$ has the property of Baire in $Y$.
\end{proof}

Next we prove that the notions in the bottom frame are equivalent.

\begin{claim}\label{cor: a jó választás} 
For a subset $A \s X$, the following equivalence holds:
$$ \text{Polish-universally Baire} \Longleftrightarrow \text{Polish-Borel-universally Baire}.$$
\end{claim}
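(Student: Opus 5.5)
The implication from Polish-Borel-universally Baire to Polish-universally Baire is trivial, since every continuous function is Borel. For the converse, assume $A$ is Polish-universally Baire, fix a Polish space $Y$ and a Borel function $f:Y \longrightarrow X$, and aim to show that $f^{-1}(A)$ has the property of Baire in $Y$. I would not follow the route of Claim \ref{topological=topological Borel}, which restricts $f$ to a comeager $G_\delta$ set on which it is continuous. Instead I would change the topology of $Y$ so that $f$ becomes continuous, without changing the Borel structure.

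\textbf{Step 1 (refining the topology).} Let $\tau$ be the topology of $Y$ and fix a countable basis $\{U_n\}_{n\in\omega}$ of $X$. Each $f^{-1}(U_n)$ is Borel in $(Y,\tau)$. By the standard change-of-topology theorem (\cite[Theorem 13.1 and Lemma 13.3]{Ke}) there is a Polish topology $\tau' \supseteq \tau$ on $Y$ with $\mathcal{B}(Y,\tau')=\mathcal{B}(Y,\tau)$ in which every $f^{-1}(U_n)$ is clopen. Then $f:(Y,\tau')\longrightarrow X$ is continuous. Since $A$ is Polish-universally Baire, $f^{-1}(A)=B\cup M$, where $B$ is $\tau'$-Borel and hence $\tau$-Borel, and $M$ is $\tau'$-meager.

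\textbf{Step 2 (the obstacle).} A $\tau'$-meager set need not be $\tau$-meager, so the decomposition in Step 1 does not transfer directly. To get around this, I would first invoke \cite[Theorem 8.38]{Ke} in $(Y,\tau)$: there is a dense $G_\delta$ set $G\subseteq(Y,\tau)$ such that $f\vert_G$ is $\tau$-continuous. Being a $G_\delta$ subset of a Polish space, $G$ is Polish. Applying the hypothesis to the continuous map $f\vert_G:G\longrightarrow X$ shows that $f^{-1}(A)\cap G$ has the property of Baire in $G$. By Lemma \ref{bairetultranz}, $f^{-1}(A)\cap G$ then has the property of Baire in $Y$. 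The remaining part $f^{-1}(A)\setminus G$ is $\tau$-meager because $G$ is comeager, so $f^{-1}(A)$ has the property of Baire in $(Y,\tau)$.

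This argument already settles the claim, because \cite[Theorem 8.38]{Ke} applies to Baire measurable, and in particular Borel, functions on a Polish space. The refinement in Step 1 is therefore unnecessary, and I would present only Step 2. The point to check carefully is that $G$ is Polish, which holds since it is a $G_\delta$ subset of the Polish space $Y$; this is exactly what lets us apply the Polish-universally Baire hypothesis to $f\vert_G$.
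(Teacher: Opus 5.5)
Your Step 2 is exactly the paper's proof. \cite[Theorem 8.38]{Ke} gives a dense $G_\delta$ set, which is therefore Polish, on which $f$ is continuous; the hypothesis together with Lemma \ref{bairetultranz} handles $f^{-1}(A)\cap G$, and the remainder lies in the meager complement of $G$. You are right that Step 1 is unnecessary, though note that Step 2 is precisely the route of Claim \ref{topological=topological Borel} that you announced you would avoid.
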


\bp
If $A$ is Polish-Borel-universally Baire, then $A$ is also Polish-universally Baire. 

Conversely, suppose that $A$ is Polish-universally Baire, and take an arbitrary Polish space $Y$ and a Borel function $f:Y \longrightarrow X$, we need to show that $f^{-1}(A)$ has the property of Baire. The function $f:Y \longrightarrow X$ is Baire measurable, so it is continuous on a dense $G_\delta$ subspace $\tilde{Y} \s Y$ (see \cite[Theorem 8.38.]{Ke}). The subspace $\tilde{Y}$ is Polish, because it is a $G_\delta$ subspace of a Polish space.  If $g$ denotes the restriction of $f$ to $\tilde{Y}$ we get that $g^{-1}(A)$ has the property of Baire in $\tilde{Y}$. It follows from Lemma \ref{bairetultranz} that $g^{-1}(A)$ has the property of Baire in $Y$. As in Claim \ref{topological=topological Borel}, $f^{-1}(A)=g^{-1}(A) \cup (f^{-1}(A) \cap (Y \setminus \tilde{Y}))$ and $Y \setminus \tilde{Y}$ is meager, so $f^{-1}(A)$ has the property of Baire.

\ep

\begin{claim}
For a subset $A \s X$, the following equivalence holds:
$$ \text{Polish-Borel-universally Baire} \Longleftrightarrow \{\mathbb{N}^\omega \}\text{-Borel-universally Baire}.$$
\end{claim}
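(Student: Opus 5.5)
The forward implication is immediate, because $\mathbb{N}^\omega$ is a Polish space. For the converse, the plan is to route through the continuous notions and reuse the equivalences already established.

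Suppose $A$ is $\{\mathbb{N}^\omega\}$-Borel-universally Baire. Every continuous function $f:\mathbb{N}^\omega \longrightarrow X$ is Borel, so $f^{-1}(A)$ has the property of Baire for every such $f$. Hence $A$ is $\n$. By Claim \ref{thm: polish=omega}, $A$ is then $\p$. By Claim \ref{cor: a jó választás}, $A$ is Polish-Borel-universally Baire, which is what we want.

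The only step with any content is the last one, which upgrades continuous preimages to Borel preimages. That step is handled inside Claim \ref{cor: a jó választás}: we restrict a Borel map to a dense $G_\delta$ set, which is again Polish and on which the map is continuous (by \cite[Theorem 8.38.]{Ke}), and then apply Lemma \ref{bairetultranz}. So nothing new needs to be proved, and I expect no real obstacle.

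One could instead argue directly, mimicking the proof of Claim \ref{thm: polish=omega}. Given a Polish space $Y$ and a Borel map $f$, we would first discard $Y_\textsl{ctbl}$ and then pass to a dense copy of $\mathbb{N}^\omega$ inside $Y_\textsl{perf}$, given by Lemma \ref{perfectcontbairelemma}. The restriction of $f$ to this copy is still Borel, so the hypothesis applies to it. The composition of equivalences above is shorter, however, and that is the route I would write.
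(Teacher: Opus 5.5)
Your proposal is correct and is essentially the paper's own proof. The converse passes from $\{\mathbb{N}^\omega\}$-Borel-universally Baire to $\{\mathbb{N}^\omega\}$-universally Baire, then to Polish-universally Baire via Claim \ref{thm: polish=omega}, and finally to Polish-Borel-universally Baire via the preceding claim on Borel preimages for Polish domains.
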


\begin{proof}
    It is clear that if $A$ is $\text{Polish-Borel-universally Baire}$ then $A$ is also $\{\mathbb{N}^\omega \}\text{-Borel-universally Baire}$.

    Suppose that $A$ is $\{\mathbb{N}^\omega \}\text{-Borel-universally Baire}$. Then $A$ is $\{\mathbb{N}^\omega\}$-universally Baire and by Claim \ref{thm: polish=omega} we have that $A$ is $\p$. Claim \ref{cor: a jó választás} implies that $A$ is {Polish-Borel-universally Baire}.
\end{proof}

Since the implications Polish-Borel-universally Baire $\Longrightarrow$ compact met\-ri\-za\-ble-Borel-universally Baire $ \Longrightarrow$ $\{2^\omega \}$-Borel-universally Baire are trivial, it is enough to prove the following claim to finish the proof that the notions in the bottom frame are equivalent:

\begin{claim}\label{bsub=cantorub}
For a subset $A \s X$, the following equivalence holds:
$$ \{\mathbb{N}^\omega \}\text{-Borel-universally Baire} \Longleftrightarrow \{2^\omega\}\text{-Borel-universally Baire}.$$
\end{claim}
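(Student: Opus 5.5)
We need to show: if $A$ is $\{2^\omega\}$-Borel-universally Baire then $A$ is $\{\mathbb{N}^\omega\}$-Borel-universally Baire. The other direction is immediate: $2^\omega$ and $\mathbb{N}^\omega$ are Borel isomorphic, and a Borel isomorphism does not preserve category in general, so we should instead argue via Claim \ref{thm: polish=omega} and Claim \ref{cor: a jó választás}. Concretely, if $A$ is $\{\mathbb{N}^\omega\}$-Borel-universally Baire, then it is Polish-Borel-universally Baire, by the previous claim. Since $2^\omega$ is Polish, it is therefore also $\{2^\omega\}$-Borel-universally Baire.

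For the nontrivial direction, suppose $A$ is $\{2^\omega\}$-Borel-universally Baire and fix a Borel $f:\mathbb{N}^\omega\to X$. The plan is to transfer $f$ to a Borel function on $2^\omega$ via a map that preserves category in both directions. Let $Q\subseteq 2^\omega$ be the set of sequences with infinitely many $1$'s. Then $Q$ is a dense $G_\delta$ subset of $2^\omega$, its complement is countable and hence meager (as $2^\omega$ is perfect), and $Q$ is homeomorphic to $\mathbb{N}^\omega$. Let $\varphi:Q\to\mathbb{N}^\omega$ be such a homeomorphism. Fix any point $a\in X$ and define $g:2^\omega\to X$ by $g=f\circ\varphi$ on $Q$ and $g\equiv a$ on $2^\omega\setminus Q$. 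Since $Q$ is Borel and $f\circ\varphi$ is Borel on $Q$, the function $g$ is Borel, and this is exactly the trick used in Claim \ref{topborel=compactborel}.

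By assumption $g^{-1}(A)$ has the property of Baire in $2^\omega$. Then $g^{-1}(A)\cap Q=\varphi^{-1}(f^{-1}(A))$ has the property of Baire in $Q$. The reason is that the intersection of a Baire-property set with a subspace $Q$ that is dense $G_\delta$ (indeed comeager) retains the property of Baire in $Q$: if $g^{-1}(A)=U\triangle M$, then the trace of $U$ is open in $Q$, and the trace of $M$ is meager in $Q$, because nowhere dense sets of the whole space intersect a dense subspace in nowhere dense sets. Finally, since $\varphi$ is a homeomorphism, $f^{-1}(A)$ has the property of Baire in $\mathbb{N}^\omega$.

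The only step needing care is the restriction of meager sets to a dense subspace. This is the standard fact: if $N$ is nowhere dense in $2^\omega$ and $Q$ is dense, then $N\cap Q$ is nowhere dense in $Q$, since $\overline{N}$ has empty interior and $Q$ is dense. Everything else is routine, so this direction requires no use of the earlier continuous-case results beyond the construction above.
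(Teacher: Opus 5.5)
Your proof is correct and follows the paper's argument. The forward direction passes through the preceding claim (itself Claim~\ref{thm: polish=omega} combined with the Polish/Polish-Borel equivalence), and the converse identifies $\mathbb{N}^\omega$ with a dense $G_\delta$ subset of $2^\omega$, extends $f$ by a constant to a Borel map on $2^\omega$, and restricts the resulting Baire-property set back to that dense subspace. The differences are cosmetic: you use an explicit $Q$ instead of Lemma~\ref{perfectcontbairelemma}, and an arbitrary constant $a$, which is harmless since $g^{-1}(A)\cap Q=\varphi^{-1}(f^{-1}(A))$ whether or not $a\in A$. You should, however, delete the opening remark calling the forward direction ``immediate'' via a Borel isomorphism, since you correctly retract it in the next breath.
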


\bp
Suppose that $A$ is $\{\bs\}$-Borel-universally Baire. Then $A$ is $\{\bs\}$-universally Baire. Claim \ref{thm: polish=omega} implies that $A$ is Polish-universally Baire and Claim \ref{cor: a jó választás} implies that $A$ is Polish-Borel-universally Baire, in particular,  $A$ is $\{2^\omega\}$-Borel-universally Baire.

Conversely, suppose that $A$ is $\{2^\omega\}\text{-Borel-universally Baire}$. We will identify $\mathbb{N}^\omega$ with a dense $G_\delta$ subspace $N \s 2^\omega$, which is possible by Lemma \ref{perfectcontbairelemma}. Take an arbitrary Borel function $f:N \longrightarrow X$ and we need to show that $f^{-1}(A)$ has the property of Baire. We can suppose that $ A \neq X$ and fix an $a \in  X \setminus A$. We define the following function $g$ on $2^\omega$ as
\begin{equation*}
    g(y):= 
    \begin{cases*}
    f(y), & $\text{if } y \in N,$ \\
    a, & $\text{if }y \in 2^\omega \setminus N$.
    \end{cases*}
\end{equation*}


Similarly to Claim \ref{topborel=compactborel}, $g$ is a Borel function. The set $g^{-1}(A)$ has the property of Baire in $2^\omega$ and using that $N$ is a dense $G_\delta$ subspace of $2^\omega$ we get that $f^{-1}(A)=g^{-1}(A) \cap N$ has the property of Baire in $N$.
\ep

With the proofs of Claim \ref{topological=topological Borel} - \ref{bsub=cantorub} the proof of Theorem \ref{Borel preimages theorem} is complete. 

\bigskip

Finally, let us finish this section with some simple observations, examining for which $\mathcal{F}$ are the $\mathcal{F}$-universally Baire sets closed under taking Borel preimages. Let us state the following obvious theorem:

\begin{theorem}\label{F-borel-ub borel ose}
    Let $X'$ be a Polish space, $A \s X$ and $f$ be a Borel function $f:X' \longrightarrow X$. If $A$ is $\mathcal{F}$-Borel-universally Baire, then $f^{-1}(A)$ is also $\mathcal{F}$-Borel-universally Baire set in $X'$.
\end{theorem}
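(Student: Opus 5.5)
The plan is to verify the definition directly, using that Borel functions are closed under composition. Fix $Y \in \mathcal{F}$ and a Borel function $g: Y \longrightarrow X'$. We need to show that $g^{-1}(f^{-1}(A))$ has the property of Baire in $Y$. Since $g^{-1}(f^{-1}(A)) = (f \circ g)^{-1}(A)$, it is enough to check that $f \circ g : Y \longrightarrow X$ is a Borel function. Then the assumption that $A$ is $\mathcal{F}$-Borel-universally Baire applies directly.

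For the composition, take an open (or any Borel) set $U \s X$. Since $f$ is Borel, $f^{-1}(U)$ is a Borel subset of $X'$. The family $\{B \s X' \mid g^{-1}(B) \in \mathcal{B}(Y)\}$ is a $\sigma$-algebra containing the open sets of $X'$, because $g$ is Borel. Hence it contains every Borel set of $X'$, in particular $f^{-1}(U)$. Thus $(f\circ g)^{-1}(U) = g^{-1}(f^{-1}(U))$ is Borel in $Y$, as required.

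There is no real obstacle here. The one point worth stating explicitly is that "Borel function" means that preimages of open sets are Borel, so we need the standard $\sigma$-algebra argument above to pass from open sets to arbitrary Borel sets. Beyond that the proof is immediate, with no restriction on $\mathcal{F}$ and no use of the earlier results.
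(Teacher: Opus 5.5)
Your proposal is correct and matches the paper's proof, which also fixes $Y \in \mathcal{F}$ and a Borel $g : Y \to X'$ and applies the hypothesis to the Borel composition $f \circ g$. The only difference is that you spell out the routine $\sigma$-algebra argument showing that a composition of Borel functions is Borel, which the paper takes for granted.
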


\begin{proof}
Consider an arbitrary $Y \in \mathcal{F}$ and a Borel function $g:Y \longrightarrow X'$, we need to show that $g^{-1}(f^{-1}(A))$ has the property of Baire in $Y$. Since $f \circ g$ is a Borel function from $Y$ to $X$, by assumption the set $g^{-1}(f^{-1}(A))$ has the property of Baire in $Y$.
\end{proof}

\begin{corollary}\label{borel ősképre zárt topub és polishub}
    \p \; and topological-universally Baire sets are closed under Borel preimages, that is, if $A \s X$ is a \p/\topsp \; set and $X'$ is a Polish space and $f$ is a Borel function $f:X' \longrightarrow X$, then $f^{-1}(A)$ is \p/\topsp.
\end{corollary}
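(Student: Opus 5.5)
The plan is to derive the corollary directly from Theorem \ref{F-borel-ub borel ose}, using Theorem \ref{Borel preimages theorem} to move between the continuous and the Borel versions of the definitions. The argument is the same for both classes; the only thing to verify is that each of them coincides with the corresponding $\mathcal{F}$-Borel-universally Baire class.

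\emph{Polish case.} Let $A \s X$ be \p, $X'$ a Polish space and $f:X' \longrightarrow X$ Borel. By Claim \ref{cor: a jó választás}, $A$ is Polish-Borel-universally Baire. Theorem \ref{F-borel-ub borel ose}, with $\mathcal{F}$ the class of Polish spaces, then gives that $f^{-1}(A)$ is Polish-Borel-universally Baire in $X'$. This class trivially implies \p, since continuous maps are Borel, so $f^{-1}(A)$ is \p.

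\emph{Topological case.} Let $A$ be \topsp. By Claim \ref{topological=topological Borel}, $A$ is topological-Borel-universally Baire. Theorem \ref{F-borel-ub borel ose}, applied with $\mathcal{F}$ the class of all topological spaces, shows that $f^{-1}(A)$ is topological-Borel-universally Baire in $X'$. This in turn implies that $f^{-1}(A)$ is \topsp.

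There is no genuine obstacle, since the nontrivial work is done in Claims \ref{topological=topological Borel} and \ref{cor: a jó választás}. The one point to check is that both claims and Theorem \ref{F-borel-ub borel ose} are stated for an arbitrary Polish underlying space. They are, so they apply in $X'$ as well as in $X$.
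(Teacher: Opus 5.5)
Your proposal is correct and follows essentially the same route as the paper: you use the equivalences Polish-universally Baire $\Leftrightarrow$ Polish-Borel-universally Baire and topological-universally Baire $\Leftrightarrow$ topological-Borel-universally Baire (Claims \ref{cor: a jó választás} and \ref{topological=topological Borel}, which make up part of Theorem \ref{Borel preimages theorem}), and then apply Theorem \ref{F-borel-ub borel ose}. You also rightly note that the return step in $X'$ needs only the trivial direction, since continuous maps are Borel.
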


\begin{proof}
     By Theorem \ref{Borel preimages theorem} Polish-universally Baire $\Longleftrightarrow$ Polish-Borel-univer\-sal\-ly Baire and topological-universally Baire $\Longleftrightarrow \text{topological-Borel-universally Baire}$. Using the previous theorem which shows that $\mathcal{F}$-Borel-universally Baire sets are closed under Borel preimages, the proof is complete.
\end{proof}

\begin{remark}
The previous corollary also shows that being Polish-universally Baire only depends on the Borel structure, hence one can also define Polish-universally Baire sets in standard Borel spaces.
\end{remark}

We have seen in Corollary \ref{borel ősképre zárt topub és polishub} that $\topsp$ and $\p$ are closed under taking Borel preimages. This raises the question what the situation is with the $\compact$ and $\cm$ sets. Let us finish this section by proving that this is actually independent for $\cm$ sets, and the negative answer is consistent for $\compact$ sets.

\begin{theorem}
    Assuming CH there is a set $A \s \bs$ such that $A$ is compact-universally Baire and $\cm$, but $f^{-1}(A)$ does not have the property of Baire for some Borel function $f:2^\omega \longrightarrow \bs$.
\end{theorem}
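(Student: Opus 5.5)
The plan is to take $A:=L$, a Luzin set in $\bs$, which exists under CH, and to use a Borel function that behaves like a homeomorphism onto $\bs$ on a comeager part of $2^\omega$ while hiding everything else in a single point. The first two properties of $A$ are already available. By Example \ref{ex:L}, $L$ is compact-universally Baire. Since $\compact$ implies $\cm$ by Theorem \ref{thm:diagram1} (or directly, since compact metrizable spaces are compact), $L$ is also $\cm$.

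\textbf{Construction of $f$.} By Lemma \ref{perfectcontbairelemma}, fix a dense $G_\delta$ set $N \s 2^\omega$ and a homeomorphism $h: N \to \bs$. Since $L$ is not all of $\bs$, fix a point $a \in \bs \setminus L$. Define $f:2^\omega \to \bs$ by $f(y)=h(y)$ for $y \in N$ and $f(y)=a$ otherwise. Because $N$ is $G_\delta$, hence Borel, and $h$ is continuous on $N$, the map $f$ is Borel, exactly as in Claim \ref{topborel=compactborel} and Claim \ref{bsub=cantorub}. Since $a\notin L$, we get $f^{-1}(L)=h^{-1}(L)\subseteq N$.

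\textbf{Why $f^{-1}(L)$ fails the property of Baire.} Suppose toward a contradiction that $f^{-1}(L)=U\triangle M$ in $2^\omega$, with $U$ open and $M$ meager. Then $h^{-1}(L)=(U\cap N)\triangle(M\cap N)$. Here $U\cap N$ is open in $N$. Also, $M\cap N$ is meager in $N$, because for a dense subspace the trace of a closed nowhere dense set is nowhere dense in the subspace. So $h^{-1}(L)$ would have the property of Baire in $N$. Transporting by the homeomorphism $h$, $L$ would have the property of Baire in $\bs$.

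\textbf{Main obstacle.} The key step, and the only one needing a genuine argument, is the standard fact (used in Example \ref{ex:L}) that a Luzin set lacks the property of Baire. Suppose instead $L=U\triangle M$ with $U$ open and $M$ meager. Then $L\cap M$ is countable, so $L\setminus U$ is countable. If $U=\emptyset$, then $L$ is countable, a contradiction. Otherwise $U$ contains a nonempty basic clopen set, and within it a closed nowhere dense copy of $2^\omega$ (or any uncountable meager closed set) lying in $U\setminus M$'s complement structure. Concretely, $U\setminus L\s M$ is meager, so $L\cap U$ is comeager in $U$. But $U$ contains an uncountable nowhere dense closed set $P$, and $L\cap P$ must be countable. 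The contradiction is then obtained via a simple splitting: write $U=U_1\cup U_2$ with $U_1,U_2$ disjoint nonempty clopen sets, take a comeager-in-$U_1$ meager-elsewhere decomposition, and repeat. The cleanest route is that $L\cap U$ comeager in $U$ contradicts the fact that $U$ contains a meager $G_\delta$ set $G$ which is comeager in a nonempty open subset of $U$ after a homeomorphic shift. I would simply cite this as well known, as the paper does.
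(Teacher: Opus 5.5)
Your proof is correct and is essentially the paper's. Both take a Luzin set $L$ and use the Borel map $2^\omega\to\bs$ that is a homeomorphism from a dense $G_\delta$ set $N$ onto $\bs$ and is constant with value $a\notin L$ on $2^\omega\setminus N$. Your check, via density of $N$, that a Baire-property decomposition of $f^{-1}(L)$ in $2^\omega$ would trace to one in $N$ makes explicit a step the paper states without comment.

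Your ``main obstacle'' sketch, however, does not produce a contradiction as written. An arbitrary uncountable closed nowhere dense $P\s U$ can meet $L$ in only countably many points even though $L\cap U$ is comeager in $U$. Also, a meager $G_\delta$ set cannot be comeager in a nonempty open set.

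The fix is to take $P$ inside the comeager part. The set $U\setminus M\s L$ contains a dense $G_\delta$ subset $G$ of $U$. This $G$ is a nonempty perfect Polish space, so it contains a copy $P$ of $2^\omega$. Then $P$ is compact, hence nowhere dense in $\bs$, yet $P\s L$ is uncountable, contradicting the Luzin property. Since you say you would cite this fact anyway, as the paper does, the overall argument stands.
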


\begin{proof}
    We choose $A$ to be a Luzin set as we did in Example \ref{ex:L}, where we showed that a Luzin set is $\compact$ and compact-metrizable-universally Baire, but $A$ itself does not have the property of Baire. We will show that there is a Borel function $f:2^\omega \longrightarrow \bs$ such that $f^{-1}(A)$ does not have the property of Baire in $2^\omega$, which will complete the proof of the theorem.

    Using Lemma \ref{perfectcontbairelemma} we identify $\bs$ with a dense $G_\delta$ subspace $N$ of $2^\omega$, then the set $A$ does not have the property of Baire in $2^\omega$. Finally, after fixing an $a \in N \setminus A$ we define the Borel function $f:2^\omega \longrightarrow N$ as:

    \begin{equation*}
f(x):=
\begin{cases*}
x, &  if  $x \in N,$ \\
a, &  if $x \in 2^\omega \setminus N.$
\end{cases*}
    \end{equation*}

Clearly, $f^{-1}(A)=A$ and we are done.
\end{proof}

In the compact metrizable case the full answer is actually an easy corollary.

\begin{theorem}
    It is consistent with ZFC that the compact metrizable-universally Baire sets are closed under taking Borel preimages.
\end{theorem}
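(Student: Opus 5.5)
The plan is to use the Miller model, where Theorem \ref{t:equiv} says that compact metrizable-universally Baire and Polish-universally Baire coincide for subsets of any Polish space. We then combine this with Corollary \ref{borel ősképre zárt topub és polishub}, which says that Polish-universally Baire sets are closed under Borel preimages in ZFC.

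The argument runs in three steps inside the Miller model.
\begin{enumerate}
\item Let $X, X'$ be Polish, let $A \s X$ be compact metrizable-universally Baire, and let $f : X' \longrightarrow X$ be Borel. By Theorem \ref{t:equiv}, $A$ is Polish-universally Baire in $X$.
\item By Corollary \ref{borel ősképre zárt topub és polishub}, $f^{-1}(A)$ is Polish-universally Baire in $X'$.
\item Since every compact metrizable space is Polish, $f^{-1}(A)$ is compact metrizable-universally Baire in $X'$. This is the trivial direction, which needs no extra hypothesis.
\end{enumerate}
This gives closure under Borel preimages.

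Two remarks on scope. Theorem \ref{t:equiv} is stated for arbitrary Polish spaces, so no $\sigma$-compactness of $X$ is needed. For $\sigma$-compact $X$ the equivalence would even hold in ZFC by Theorem \ref{thm:diagram2}, but $X$ is arbitrary here. The reason the extra hypothesis is needed at all is that, for a non-$\sigma$-compact target such as $\bs$, the CH example with a Luzin set shows failure.

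The only nontrivial ingredient is Theorem \ref{t:equiv}, which is proved later in the paper via the reflection principle of Theorem \ref{thm:non-Baire} in the Miller model. We may assume it here. So the main obstacle lies entirely in that theorem, and the corollary itself is a two-line combination. Together with the preceding CH result, this also shows that closure under Borel preimages for compact metrizable-universally Baire sets is independent of ZFC.
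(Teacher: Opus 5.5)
Your proposal is correct and follows the paper's own proof: work in the Miller model, where by Theorem \ref{t:equiv} compact metrizable-universally Baire coincides with Polish-universally Baire, and then use the ZFC fact that Polish-universally Baire sets are closed under Borel preimages. Your side remarks also check out: the equivalence holds in ZFC for $\sigma$-compact targets, and the CH Luzin-set example in $\bs$ gives the other half of the independence.
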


\begin{proof}
    In Theorem \ref{t:equiv} we will prove that it is consistent with ZFC that $\cm$ and $\p$ sets are the same. By Corollary \ref{borel ősképre zárt topub és polishub}, $\p$ sets are closed under taking Borel preimages.
\end{proof}

However, the consistency of the positive direction for compact-universally Baire sets remains open.

\begin{question}
    Is it true in ZFC that the $\compact$ sets are not closed under taking Borel preimages?
\end{question}

\customsection{Definitions using Borel isomorphisms, changes of topologies and Boolean algebras}{Definitions using Borel isomorphisms, changes of topologies and Boolean algebras}{Borel isomorphisms and Boolean algebras}\label{Boolean section}

It has to be emphasized that in this section we are building heavily on ideas of Zakrzewski's papers \cite{Zak1}, \cite{Zak2}.

In \cite{Re}, Rec\l{}aw defined a subset $A \s \R^n$ to be universally Baire if for every Borel isomorphism $f:\R^n \to \R$, the image $f(A)$ has the property of Baire. A related notion is in \cite{Gr1}, where a set $A \subseteq X$ is called absolutely of the first category, if for every Borel isomorphism $f:X \longrightarrow X$ the set $f(A)$ is meager.


One of the main results of this section will be that we prove that Rec\l{}aw's definition coincides with the Polish-universally Baire sets, strengthening the statement that Polish-universal Baireness is the natural dual to universal measurability.

In this section we characterize the Polish-universally Baire sets in various ways. As in the above works of Zakrzewski, a surprising feature of the proofs will be that in order to prove that the notions introduced so far are equivalent, we need to introduce further equivalent definitions using Boolean algebras.

First we will need the notion of the category algebra:

\begin{definition}\label{def: cat algebra}
    We call a Boolean algebra $\mathbb{B}$ a category algebra, if it is complete, atomless and separable (has a countable dense set).
\end{definition} 

The following claim is probably well-known, however we could not find the explicit statement in the literature with a proof, so we have decided to include a short proof:

\begin{claim}\label{claim: unicity of the category algebra}
    The category algebra is unique up to isomorphism, that is, if we are given two category algebras $\mathbb{A}$ and $\mathbb{B}$, then they are isomorphic Boolean algebras.
\end{claim}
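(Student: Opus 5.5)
Both $\mathbb{A}$ and $\mathbb{B}$ are complete and atomless, and each has a countable dense set. The plan is a back-and-forth construction between countable dense subalgebras, extended to the whole algebras by completeness.

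\emph{Step 1.} Let $D_{\mathbb{A}}$ be a countable dense subset of $\mathbb{A}$, and let $A_0$ be the Boolean subalgebra it generates. Then $A_0$ is countable and still dense, meaning every nonzero element of $\mathbb{A}$ lies above some nonzero element of $A_0$. Moreover $A_0$ is atomless. Indeed, if $a \in A_0$ were an atom of $A_0$, then since $\mathbb{A}$ is atomless we can split $a$ in $\mathbb{A}$ into nonzero disjoint $a_1, a_2$. By density, each $a_i$ lies above a nonzero element of $A_0$, and both of these are below $a$ and disjoint, a contradiction. In the same way we obtain a countable dense atomless subalgebra $B_0 \subseteq \mathbb{B}$.

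\emph{Step 2.} Every countable atomless Boolean algebra is isomorphic to the clopen algebra of $2^\omega$. The standard back-and-forth argument gives this: we enumerate both algebras and extend finite partial isomorphisms between finite subalgebras, using atomlessness to split atoms of the finite subalgebra as needed. Alternatively, by Stone duality the Stone space is a countably based, compact, zero-dimensional, perfect space, hence homeomorphic to $2^\omega$. Either way we get an isomorphism $\varphi : A_0 \to B_0$.

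\emph{Step 3.} We extend $\varphi$ to $\Phi : \mathbb{A} \to \mathbb{B}$ by
$$\Phi(a) := \sup\{\varphi(c) \mid c \in A_0,\ c \le a\}.$$
The key fact is that a complete Boolean algebra is the completion of any dense subalgebra. Each $a \in \mathbb{A}$ equals $\sup\{c \in A_0 \mid c \le a\}$: otherwise $a$ minus this supremum is nonzero and by density contains a nonzero $c \in A_0$, a contradiction. Using this, one checks the following:
\begin{itemize}
\item $\Phi$ is order preserving and extends $\varphi$.
\item $\Phi$ has the analogous map $\Psi$ built from $\varphi^{-1}$ as a two-sided inverse.
\item $\Phi$ preserves complements. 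The main point is that $\Phi(a)$ and $\Phi(-a)$ are disjoint and their join is $1$. If the join were not $1$, a nonzero $\varphi(c)$ would lie below its complement; then $c$ is disjoint from every element of $A_0$ below $a$ or below $-a$, so by density both $c \wedge a$ and $c \wedge -a$ are zero, impossible for $c \ne 0$.
\end{itemize}
An order isomorphism between Boolean algebras is automatically a Boolean isomorphism. This is the uniqueness of the completion up to isomorphism, which I would either cite (e.g.\ Sikorski, or Jech's theorem on completions) or verify directly as above.

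The main obstacle is Step 2, the countable back-and-forth, which is where atomlessness is really used. I would prefer to cite it as well known (e.g.\ via Stone duality and Brouwer's characterization of $2^\omega$) rather than redo it. The rest is routine density bookkeeping.
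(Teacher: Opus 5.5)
Your proposal is correct and follows the same route as the paper. Both arguments pass to countable dense atomless subalgebras, identify these via the uniqueness of the countable atomless Boolean algebra, and conclude by uniqueness of completions. The only difference is that the paper cites the last two facts (Jech, Exercise 7.22, and uniqueness of the completion), whereas you verify the density bookkeeping and the extension $\Phi$ directly.
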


\begin{proof}
    Since both $\mathbb{A}$ and $\mathbb{B}$ are atomless and separable, it is easy to see that they contain dense countable atomless subalgebras $\tilde{\mathbb{A}}$, $\tilde{\mathbb{B}}$. 

    Then $\tilde{\mathbb{A}}$ and $\tilde{\mathbb{B}}$ are isomorphic Boolean algebras (see \cite[Exercise 7.22]{Jech}). We have that $\A$ and $\B$ are the completion of the Boolean algebras $\tilde{\mathbb{A}}$ and $\tilde{\mathbb{B}}$, and using that the completion of any Boolean algebra is unique up to isomorphism, the proof is complete. 
\end{proof}

By the previous claim, it makes sense to speak about the category algebra, that we will denote by $\cat$. The following lemma shows that the category algebra naturally appears in many situations.

\begin{lemma}\label{category algebra as borel mod meager}
    Suppose that $Y$ is a second countable topological space such that every point is meager and $Y$ is not meager in itself. Then
    $$
    \mathcal{B}\mathcal{P} (Y) / \mathcal{M}(Y) \simeq \mathcal{B}(Y) / \mathcal{B}(Y) \cap \mathcal{M}(Y) \simeq \cat.
    $$
\end{lemma}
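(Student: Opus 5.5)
The plan is to prove the two isomorphisms separately and then show that the common quotient satisfies Definition \ref{def: cat algebra}. Claim \ref{claim: unicity of the category algebra} then identifies it with $\cat$.

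For the first isomorphism, I will consider the map $\mathcal{B}(Y) \to \mathcal{BP}(Y)/\mathcal{M}(Y)$, $B \mapsto [B]$. It is a Boolean homomorphism. Its kernel is exactly $\mathcal{B}(Y)\cap\mathcal{M}(Y)$. It is surjective because, as recalled in the preliminaries, every set with the property of Baire is the union of a Borel set and a meager set, so it differs from a Borel set by a meager set. This gives $\mathcal{B}(Y)/(\mathcal{B}(Y)\cap\mathcal{M}(Y)) \simeq \mathcal{BP}(Y)/\mathcal{M}(Y)$, and it remains to analyse $\mathbb{B}:=\mathcal{BP}(Y)/\mathcal{M}(Y)$.

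Next I will check the three properties of $\mathbb{B}$.
\begin{itemize}
\item \emph{Separability.} Fix a countable basis $\{U_n\}$ of $Y$. Every $A \in \mathcal{BP}(Y)$ can be written as $U\triangle M$ with $U$ open (indeed regular open), and $U=\bigcup_{n\in I}U_n$ for some $I$. So the classes of nonempty $U_n$ are dense below every nonzero element: if $[A]\neq 0$ then $U\neq\emptyset$, so some nonempty $U_n\subseteq U$ satisfies $[U_n]\le[A]$. We also need $[U_n]\neq 0$ for nonempty $U_n$. Here the hypothesis "$Y$ not meager in itself" has to be localised, since I must exclude nonempty open meager sets. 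If $W$ is the union of all meager basic open sets, then $W$ is a countable union of meager sets and hence meager and open. I will therefore pass to $Y\setminus \overline{W}$, or simply restrict the dense set to those $U_n$ not meager; this restricted family is still dense, and it is nonempty since $Y$ is not meager.
\item \emph{Completeness.} Since $\mathbb{B}$ is a $\sigma$-algebra modulo a $\sigma$-ideal with a countable dense subset, it satisfies the ccc. A ccc $\sigma$-complete algebra is complete. Alternatively, I can argue directly: for a family $\{[A_i]\}$, let $U$ be the union of the basic sets $U_n$ with $[U_n]\le[A_i]$ for some $i$. This is a countable union, and $[U]$ is the supremum. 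To see that it is an upper bound, note that $A_i\setminus U$ is meager; otherwise its regular-open part contains a nonmeager $U_n$ lying below $[A_i]$, and that $U_n$ is already included in $U$.
\item \emph{Atomlessness.} Let $[U_n]\neq 0$ with $U_n$ nonmeager and open, and pick $y\in U_n$. The set $U_n\setminus\{y\}$ is still nonmeager, because $\{y\}$ is meager, so it contains a nonmeager basic open set $V$ modulo meager sets. The aim is to split $[U_n]$ into two nonzero pieces. I would split by basic sets: choose two points whose neighbourhoods differ, giving $V \subseteq U_n$ with $[V] \ne 0$ and $[U_n\setminus V]\neq 0$. Points being meager forces the second-countable, non-meager part of $Y$ to be infinite in every nonmeager open set, which allows two disjoint-modulo-meager nonmeager pieces to be found.
\end{itemize}

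I expect the main obstacle to be the atomlessness step without Hausdorffness. Points are meager, but I cannot separate points by open sets. My first attempt will use the fact that an atom $[U]$ with $U$ open would make the filter $\{B : [B\cap U]=[U]\}$ a $\sigma$-complete ultrafilter on the countably generated algebra. Such a filter concentrates on the intersection of the members of the basis (or their complements) that it contains, and that intersection is a Borel set which is an equivalence class of the $T_0$-quotient. I would then need to show that this class is meager, using the meagerness of points together with the countable basis. If this fails, I will fall back to arguing with the $T_0$ quotient of $Y$, in which points with the same neighbourhoods are identified.
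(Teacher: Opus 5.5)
Your first isomorphism, your separability argument (restricted to the nonmeager basic sets) and your completeness argument are fine and agree in substance with the paper. The gap is in atomlessness.

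Your bulleted attempt does not work. Two points with different neighbourhoods give a basic $V$ containing one and not the other, but this says nothing about whether $V\cap U_n$ and $U_n\setminus V$ are nonmeager. Likewise, ``infinitely many points in every nonmeager open set'' is a cardinality statement and cannot by itself produce a splitting modulo $\mathcal{M}(Y)$.

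Your ultrafilter argument is the right one, but you stop exactly at the decisive step (``I would then need to show that this class is meager''). The fallback to the $T_0$ quotient needs the same fact. Here is the missing observation.
\begin{itemize}
\item Let $x\sim y$ mean that $x$ and $y$ lie in the same open sets. Every open set is $\sim$-saturated.
\item So if $N$ is nowhere dense and $N^*$ is its saturation, every nonempty open $V$ with $V\cap N=\emptyset$ also satisfies $V\cap N^*=\emptyset$. Hence $N^*$ is nowhere dense.
\item Consequently $\{x\}\subseteq\bigcup_k N_k$ gives $[x]_\sim\subseteq\bigcup_k N_k^*$. That is, the $\sim$-class of a meager point is meager.
\end{itemize}
Now suppose $[U]$ were an atom, and set $\mathcal{U}=\{B\in\mathcal{B}(Y): U\setminus B\in\mathcal{M}(Y)\}$. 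Let $E=\bigcap_{U_n\in\mathcal U}U_n\cap\bigcap_{U_n\notin\mathcal U}(Y\setminus U_n)$. By the ultrafilter property and $\sigma$-completeness, $E\in\mathcal U$, so $U\cap E$ is nonmeager. But $E$ is empty or a single $\sim$-class, hence meager, a contradiction.

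With this step supplied, your route differs from the paper's. The paper assumes an atom $[Z]$, asserts that every $x\in Z$ has a neighbourhood $V_x$ with $V_x\cap Z$ meager, and takes a countable subcover. The natural justification is that otherwise $Z\setminus V$ is meager for every neighbourhood $V$ of $x$. This yields a contradiction only if those neighbourhoods intersect in $\{x\}$, so the argument tacitly uses $T_1$.

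Your concern is therefore well founded. Take $\mathbb{R}\cup\{*\}$, whose open sets are the Euclidean open subsets of $\mathbb{R}$ together with the whole space. All hypotheses of the lemma hold, yet the only neighbourhood of $*$ is the whole space. The lemma is applied to arbitrary second countable spaces in the proof of Theorem~\ref{thm: boole equivalence}. So your $T_0$-class argument, once completed as above, is the one that works without separation axioms.
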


\bp
First we prove that $\mathcal{B}\mathcal{P} (Y) / \mathcal{M}(Y) \simeq \mathcal{B}(Y) / \mathcal{B}(Y) \cap \mathcal{M}(Y)$. Suppose that we are given an equivalence class $[Z] \in \mathcal{B}\mathcal{P} (Y) / \mathcal{M}(Y)$, then  $Z$ can be written as $U \triangle M$, where $U$ is open and $M$ is meager. We define a Boolean algebra isomorphism as follows:
$$
\Phi:\mathcal{B}\mathcal{P} (Y) / \mathcal{M}(Y) \longrightarrow \mathcal{B}(Y)/ \mathcal{B}(Y) \cap \mathcal{M}(Y), \quad [Z] \mapsto [U].
$$

First, we need to check that $\Phi$ is well-defined, because it depends on which representative we choose from $[Z]$ and it also depends on how we write $Z$ as a symmetric difference of an open and a meager set. It is enough to check that if $Z^{'} \in  [Z]$ and $Z^{'} = U' \triangle M'$, where $U'$ is open and $M'$ is meager, then $U \triangle U'$ is meager. The set $U \triangle U'$ is meager because it is covered by
$(Z \triangle Z^{'}) \cup M \cup M'$.

In a similar fashion, one can easily show that $\Phi$ is bijective and preserves the order of the Boolean algebras, that is:
$$
[Z^{'}] \leq [Z] \Leftrightarrow \Phi([Z^{'}]) \leq \Phi([Z]).
$$

Since any bijective order preserving map between Boolean algebras is an isomorphism, we have proved that $\mathcal{B}\mathcal{P} (Y) / \mathcal{M}(Y) \simeq \mathcal{B}(Y) / \mathcal{B}(Y) \cap \mathcal{M}(Y)$.

Now we prove that $\mathcal{B}(Y) / \mathcal{B}(Y) \cap \mathcal{M}(Y) \simeq \cat$. We will prove that the Boolean algebra $\mathcal{B}(Y) / \mathcal{B}(Y) \cap \mathcal{M}(Y)$ is complete, atomless with a countable dense set, which will yield the second part of the theorem.

First, we show that $\mathcal{B}(Y) / \mathcal{B}(Y) \cap \mathcal{M}(Y)$ has a countable dense set. Let $\{V_i\}_{i \in \omega}$ be an enumeration of a basis of $Y$. We claim that the positive elements of $\{[V_i]\}_{i \in \omega}$ form a countable dense set in $\mathcal{B}(Y) / \mathcal{B}(Y) \cap \mathcal{M}(Y)$ (some of the $[V_i]$-s may be the $0$ element in the Boolean algebra, because we have not assumed that $Y$ is a Baire space).
Suppose that we are given a non-zero element $[B]$ in the Boolean algebra. Then choose an open set $U$ with $[U]=[B]$. Since $U$ is not meager, it contains a $V_i$ that is also not meager, hence $[V_i] \leq [U]=[B]$.

We prove now that $\mathcal{B}(Y) / \mathcal{B}(Y) \cap \mathcal{M}(Y)$ is complete. We will show that $\mathcal{B}(Y) / \mathcal{B}(Y) \cap \mathcal{M}(Y)$ is $\sigma$-complete and ccc (every partition is at most countable/every antichain is countable). It is well-known and easy to verify that taking the quotient of a $\sigma$-complete Boolean algebra with a $\sigma$-ideal is also a $\sigma$-complete Boolean algebra, hence $\mathcal{B}(Y) / \mathcal{B}(Y) \cap \mathcal{M}(Y)$ is $\sigma$-complete. The ccc property follows from the existence of a countable dense set. Every $\sigma$-complete and ccc Boolean algebra is complete, see \cite[Exercise 7.33.]{Jech}.

Finally, we prove that $\mathcal{B}(Y) / \mathcal{B}(Y) \cap \mathcal{M}(Y)$ is atomless. Suppose for a contradiction that there is an atom $[B] \in \mathcal{B}(Y) / \mathcal{B}(Y) \cap \mathcal{M}(Y)$. By fixing a representative $Z \in [B]$, since $[B]$ is an atom, we have that
$$
\forall D \in \mathcal{B}(Y): \textrm{ either } Z \cap D \in \mathcal{M}(Y) \textrm{ or } Z \setminus D \in \mathcal{M}(Y).
$$
Therefore, since all points of $Y$ are meager and $\mathcal{B}(Y) / \mathcal{B}(Y) \cap \mathcal{M}(Y)$ is $\sigma$-complete, every $x \in Z$ has a neighbourhood $V_x$ such that $V_x \cap Z$ is meager. We can choose countably many $V_x$ such that they cover $Z$, which implies that $Z$ is meager, a contradiction.
\ep

\begin{remark}
Now we give an example demonstrating that in the previous lemma it is not enough to suppose that the space $Y$ is perfect instead of that every point is meager.

    It is clear that if every point is meager in a space $Y$ then $Y$ is perfect, otherwise we would have a meager open singleton $\{y \}$, which is impossible.

    It is also easy to see that if every point is closed in a perfect space $Y$ then every point is nowhere dense, hence every point is meager.

     Consider now the following topology $\tau$ on $\R$:

    $$
    \tau:= \left\{ \left. \left(-\frac{1}{n}, \frac{1}{n} \right) \right| \; n \in \omega \setminus \{0\} \right\} \cup \{\R\} \cup \{ \emptyset \}.
    $$

    Clearly, $(\R,\tau)$ is second countable and no points are isolated. On the other hand, one can easily check that $\{0\}$ is a nonmeager point with the property of Baire, hence $\{0\}$ is an atom in the Boolean algebra $\mathcal{BP}(\R,\tau)/ \mathcal{M}(\R, \tau)$.
\end{remark}

Before the following lemma, recall that if $\B$ is a complete Boolean algebra and $\A$ is a subalgebra of $\B$, then we say that $\A$ is a \emph{complete subalgebra} of $\B$ if $\A$ is a complete Boolean algebra and the $\sup$ of every subset $X$ of $\A$ is the same in $\A$ and $\B$.

\begin{lemma}\label{lemma: complete subalgebra of the category algebra}
A complete atomless subalgebra of $\cat$ is isomorphic to $\cat$.    
\end{lemma}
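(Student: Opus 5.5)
By Claim~\ref{claim: unicity of the category algebra}, any complete, atomless, separable Boolean algebra is isomorphic to $\cat$. So let $\A$ be a complete atomless subalgebra of $\B=\cat$. Completeness and atomlessness are given, so it only remains to show that $\A$ has a countable dense subset. The difficulty is that density in $\B$ does not pass to $\A$: a countable dense $D\subseteq\B$ need not lie in $\A$. The idea is to push $D$ into $\A$ with the \emph{upper projection}
$$
\pi(b):=\inf\{a\in\A \mid a\ge b\}.
$$
This infimum exists in $\A$ because $\A$ is complete. Since $\A$ is a complete subalgebra, infima computed in $\A$ agree with those in $\B$, so $\pi(b)\ge b$ and $\pi(b)$ is the least element of $\A$ above $b$.

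Fix a countable dense $D\subseteq\B\setminus\{0\}$. The natural candidate for a countable subset of $\A$ is $\{\pi(d)\mid d\in D\}$, but it need not be dense. For $a\in\A^+$ we can find $d\le a$ with $d\in D$, which gives $\pi(d)\le a$, so $\{\pi(d)\}$ is only \emph{cofinal downward} in the weak sense. To obtain density we must first refine the countable set inside $\A$. Instead of density, I would aim for the countable chain condition plus a countable generating set, and argue as follows.

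\textbf{Step 1.} $\A$ is ccc, because $\B$ is ccc (it has a countable dense set) and every antichain of $\A$ is an antichain of $\B$.

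\textbf{Step 2.} The countable set $E=\{\pi(d)\mid d\in D\}\subseteq\A$ completely generates $\A$, or at least has the separation property: every $a\in\A$ equals $\sup\{\pi(d)\mid d\in D,\ d\le a\}$. To see this, note that $a=\sup_\B\{d\in D\mid d\le a\}$ by density of $D$ in $\B$. Each such $d$ satisfies $d\le\pi(d)\le a$, so the supremum of these $\pi(d)$ lies between $a$ and $a$, and hence equals $a$ in $\B$, and thus in $\A$.

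\textbf{Step 3.} Let $\A_0$ be the countable subalgebra of $\A$ generated by $E$. By Step 2, every element of $\A$ is the supremum of a subset of $E$. I expect this step to be the main obstacle: passing from "every element is a sup of elements of a countable set" to "there is a countable dense set". That $a$ is a sup of elements of $E$ below it does not give an element of $E$ that is nonzero and below $a$ \emph{unless} $a\neq 0$, and in that case some $\pi(d)\le a$ is automatically nonzero. So in fact $E$ itself is dense in $\A$: for $a\in\A^+$, Step 2 yields $d\in D$ with $d\le a$, and $0<d\le\pi(d)\le a$. (The worry raised in the second paragraph dissolves, because weak downward cofinality by nonzero elements is exactly density.)

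Therefore $\A$ is complete, atomless and separable, and Claim~\ref{claim: unicity of the category algebra} gives $\A\simeq\cat$. The one point to check carefully is that $\pi(d)$ computed in $\A$ dominates $d$ in $\B$. This is where the definition of complete subalgebra is used: the meet taken in $\A$ of elements above $d$ is their meet in $\B$ (via the supremum of complements), and so it remains above $d$.
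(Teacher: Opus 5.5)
Your argument is correct and is essentially the paper's proof. The paper also takes the upper projections $b_i=\inf\{c\in\A\mid c\ge d_i\}$ of a countable dense set $\{d_i\}\subseteq\cat$, and notes that $d_i\le b$ implies $0<d_i\le b_i\le b$, so the $b_i$ are dense in the subalgebra. Your Steps 1--2 and the algebra $\A_0$ are unnecessary detours, while your explicit check that $d\le\pi(d)$ holds because infima in a complete subalgebra agree with those in $\cat$ is a point the paper leaves implicit.
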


\begin{proof}
    Take an arbitrary complete atomless subalgebra $\B$ of $\cat$. By Claim \ref{claim: unicity of the category algebra} it is enough to prove that $\B$ is separable.

    Let $D:=\{d_1,d_2,...\}$ be a countable dense set for $\cat$. For every $d_i$, we define $b_i \in \B$ in the following way:

    $$
    b_i:= \inf\{c \in \B \mid c \geq d_i \}.
    $$

Notice that $d_i \leq b_i$ for every $i$. We claim that $\{b_i \mid i \in \omega \}$ is a countable dense set for $\B$. Fix an arbitrary positive element $b \in \B$. Since the set $D$ is dense, there exists a $d_i \in D$ such that $d_i \leq b$. Then by definition $b_i \leq b$, which completes the proof.    
\end{proof}

In the following theorem, as explained above, both the notations $\mathcal{B}(X)$ and  $\mathcal{B}(X, \tau)$ will denote the class Borel subsets of $(X, \tau)$. We will follow our convention that if a statement or proof only involves one topology then we simply write $\mathcal{B}(X)$.

\bt\label{thm: boole equivalence}
For a set $A$ in a Polish space $(X,\tau)$, the following are equivalent:

\begin{enumerate}[label=(\roman*)]
    \item\label{polishunivbaire} $A$ is Polish-universally Baire.

    \item\label{Borel isomorphic preimage} For every Polish space Y and Borel isomorphism $f:Y \longrightarrow X$ onto $X$, the set $f^{-1}(A)$ has the property of Baire, equivalently, for every Polish topology $\tau'$ on $X$ such that $\mathcal{B}(X, \tau)=\mathcal{B}(X,\tau')$ we have that $A\in\mathcal{B}\mathcal{P}(X,\tau')$.

    \item\label{Borel isomorphic preimage local} For every Polish space Y, subset $Z \s Y$ and Borel isomorphism $f:Z \longrightarrow A$ onto $A$, the set $Z=f^{-1}(A)$ has the property of Baire.

    \item\label{B(X) modulo I = C} For every $\sigma$-ideal $\mathcal{I} \s \mathcal{B}(X,\tau)$ such that $\mathcal{B}(X,\tau)/\mathcal{I} \simeq \cat$ there are Borel sets $ C,D \in \mathcal{B}(X,\tau)$ such that $C \s A \s D$ and $D \setminus C \in \mathcal{I}$.

    \item\label{B(A) modulo I = C} For every $\sigma$-ideal $\mathcal{I} \s \mathcal{B}(A,\tau \vert_A)$ such that $\mathcal{B}(A,\tau\vert_A) / \mathcal{I} \simeq \cat $ there exists a Borel set $B \in \mathcal{B}(X,\tau)$ such that $B \s A$ and $A \setminus B \in \mathcal{I}$.

    \item\label{M2-UB} The set $A$ is second countable-universally Baire, that is, $f^{-1}(A)$ has the property of Baire in every second countable space $Y$ for every continuous $f:Y \longrightarrow X$.

    \item\label{M2 topology on X} The set $A$ has the property of Baire in every second countable topology $\tau'$ on $X$ such that $\mathcal{B}(X,\tau) \s \mathcal{B}\mathcal{P}(X,\tau')$.

\item\label{M2 topology on A} For every second countable topology $\tau'$ on $A$ such that $\mathcal{B}(A,\tau \vert_A) \s \mathcal{BP}(A,\tau')$  we can write $A=C \sqcup D$ where $C \in \mathcal{B}(X,\tau)$ and $D \in \mathcal{M}(A,\tau')$.

    \item\label{Polish topology refinement} The set $A$ has the property of Baire in every finer Polish topology, that is, for every Polish $\tau' $ on $X$ with $\tau \s \tau'$ we have that $A \in \mathcal{B}\mathcal{P}(X,\tau')$.

\end{enumerate}

\et

\begin{proof}

We will prove the following implications: $\ref{Borel isomorphic preimage} \Rightarrow \ref{B(X) modulo I = C} \Rightarrow \ref{M2-UB} \Rightarrow \ref{M2 topology on X} \Rightarrow \ref{Borel isomorphic preimage}$,  $\ref{B(X) modulo I = C} \Rightarrow \ref{B(A) modulo I = C} \Rightarrow \ref{M2 topology on A}  \Rightarrow \ref{Borel isomorphic preimage local} \Rightarrow \ref{Borel isomorphic preimage}$, $\ref{M2-UB} \Rightarrow \ref{polishunivbaire} \Rightarrow \ref{Borel isomorphic preimage}$, $\ref{Polish topology refinement} \Leftrightarrow \ref{Borel isomorphic preimage}$.

$\ref{Borel isomorphic preimage} \Rightarrow \ref{B(X) modulo I = C}:$  Suppose that $\mathcal{I}$ is a $\sigma$-ideal on $\mathcal{B}(X)$ such that $\mathcal{B}(X) / \mathcal{I} \simeq \cat$.  We will show that there exists a Polish topology $\tau'$ on $X$ such that $\mathcal{B}(X,\tau') = \mathcal{B}(X,\tau)$ and $\mathcal{M}(X,\tau') \cap \mathcal{B}(X,\tau') = \mathcal{I}$. We are going to use \cite[Theorem 15.10]{Ke}, which says that if we are given Polish spaces $Z_1,Z_2$ and $\sigma$-ideals $\mathcal{J}_1 \s \mathcal{B}(Z_1)$, $\mathcal{J}_2 \s \mathcal{B}(Z_2)$ with $\mathcal{B}(Z_1)/ \mathcal{J}_1 \simeq \mathcal{B}(Z_2)/ \mathcal{J}_2$ such that $\mathcal{J}_1$ and $\mathcal{J}_2$ contain uncountable sets, then there exists a Borel isomorphism $\varphi:Z_1 \longrightarrow Z_2$ onto $Z_2$ such that $A \in \mathcal{J}_1 \Longleftrightarrow \varphi(A) \in \mathcal{J}_2$. We will apply this with $\mathcal{J}_1 = \mathcal{I}$ and $\mathcal{J}_2 = \mathcal{B}(\bs) \cap \mathcal{M}(\bs)$.

We first show that  $\mathcal{B}(X) /\mathcal{I} \simeq \mathcal{B}(\bs) / (\mathcal{B}(\bs) \cap \mathcal{M}(\bs) )$, in fact, they are both isomorphic to $\cat$. Indeed, this is clear by assumption for $\mathcal{B}(X) / \mathcal{I}$, and this is also clear for $\mathcal{B}(\bs) / (\mathcal{B}(\bs) \cap \mathcal{M}(\bs) )$ by Lemma \ref{category algebra as borel mod meager}.

Now we check that the ideals contain uncountable sets. This is clear for $\mathcal{B}(\bs) \cap \mathcal{M}(\bs)$. For $\mathcal{I}$ we argue as follows. Suppose that $\mathcal{I}$ contains only countable sets. Since it is easy to construct continuum many pairwise disjoint Borel sets of size continuum, there exist continuum many nonzero pairwise disjoint elements in the factor algebra $\mathcal{B}(X)/ \mathcal{I}$, contradicting the ccc property of $\cat$. Putting everything together we get a Borel isomorphism $\varphi:X \longrightarrow \bs$ such that $$A \in \mathcal{I} \Longleftrightarrow \varphi(A) \in \mathcal{B}(\bs) \cap \mathcal{M}(\bs).$$
The appropriate topology $\tau'$ on $X$ is defined as the topology of the $\bs$ 'pulled back' by $\varphi$, that is, for $O \s X$:

$$
O \in \tau' \Longleftrightarrow \varphi(O) \text{ is open in } \bs.
$$
With this definition of $\tau'$, we have $ \mathcal{M}(X,\tau')\cap \mathcal{B}(X,\tau') = \mathcal{I}$.

It follows from $\ref{Borel isomorphic preimage}$ that the set $A$ has the property of Baire with respect to the topology $\tau'$, which implies that there exist Borel sets $C,D$ such that $C \s A \s D$ and $D \setminus C  \in \mathcal{M}(X,\tau')\cap \mathcal{B}(X,\tau') = \mathcal{I}$.

$\ref{B(X) modulo I = C} \Rightarrow \ref{M2-UB}:$ Take an arbitrary second countable space $Y$ and a continuous function $f:Y \longrightarrow X$, we need to prove that $f^{-1}(A)$ has the property of Baire. Let us consider the following collection of open sets:

$$
\mathcal{H}:=\{ V \s Y \mid V \textit{ is an open subset in } Y, \; f(V) \textit{ is countable} \}.
$$

Using that the space $Y$ is second countable, we see that $W:=\cup \mathcal{H}$ is an open set such that $f(W)$ is countable, which implies that $f^{-1}(A) \cap W$ is a relative $F_\sigma$ set in $W$.

Let $Z:= Y \setminus W$. To deduce that $f^{-1}(A)$ has the property of Baire, it is enough to prove that $f^{-1}(A) \cap Z$ has the property of Baire in $Y$ and by Lemma \ref{bairetultranz} it is enough to prove that $f^{-1}(A) \cap Z$ has the property of Baire in $Z$. 

If $Z$ is meager in itself (including the case when $Z=\emptyset$) then we are done. So we can assume that $Z$ is not meager in itself and let us denote by $g$ the restriction of $f$ to $Z$. We claim that $g$ is nowhere locally constant. Indeed, suppose for a contradiction that $g$ is constant on a nonempty open set $U \s Z$. Then there exists an open set $V$ in $Y$ such that $V \cap Z =U$ which implies that $f(V)$ is countable, so $U \s W$, a contradiction. Using that $g$ is nowhere locally constant, it follows that every point $x \in Z$ is meager in $Z$. All in all, by Lemma \ref{category algebra as borel mod meager} we have that $\B:= \mathcal{B}(Z)/ \mathcal{B}(Z) \cap \mathcal{M}(Z)$ is isomorphic to $\cat$.

We define the following $\sigma$-ideal $\mathcal{I}$ on $\mathcal{B}(X):$

$$
\mathcal{I}:= \{ B \in \mathcal{B}(X) \mid g^{-1}(B) \in \mathcal{M}(Z) \}.
$$
It is easy to see that $\A:= \mathcal{B}(X) / \mathcal{I}$ is a $\sigma$-complete Boolean algebra and the ccc property of $\A$ follows from the ccc property of $\mathbb{B}$, hence $\mathbb{A}$ is a complete Boolean algebra.

Let us recall that an embedding between Boolean algebras is called a $\sigma$-complete embedding between Boolean algebras if it preserves the sup of countably many Boolean algebra elements (assuming in advance that both Boolean algebras are $\sigma$-complete). We use the function $g$ to define a $\sigma$-complete embedding of the Boolean algebra $\A$ into the Boolean algebra $\B$ by the formula:

$$
\Gamma:[B] \in \mathcal{B}(X) / \mathcal{I} \mapsto [g^{-1}(B)]  \in  \mathcal{B}(Z)/(\mathcal{M}(Z) \cap \mathcal{B}(Z)).
$$

Indeed, $\Gamma$ is an embedding and it is clearly $\sigma$-complete. We prove that $\Gamma$ is actually a complete embedding of $\A$ into $\B$, hence $\A$ can be viewed as a complete subalgebra of $\B$. Recall that a Boolean subalgebra $\Tilde{\mathbb{A}}$ of a Boolean algebra $\Tilde{\mathbb{B}}$ is called a regular subalgebra if every maximal antichain in $\Tilde{\mathbb{A}}$ is a maximal antichain in $\Tilde{\mathbb{B}}$. By \cite[Exercise 7.31]{Jech}, if $\Tilde{\mathbb{A}}$ is a regular subalgebra of $\Tilde{\mathbb{B}}$, then the inclusion map extends to a unique complete embedding of the completion of $\Tilde{\mathbb{A}}$ into the completion of $\Tilde{\mathbb{B}}$. Using the previously mentioned result, it is enough to prove that for every maximal antichain $H \s \A$, the set $\Gamma(H)$ forms a maximal antichain in $\B$. But this is easy since, $\A$ is ccc, hence the set $H$ is countable and the sup of the countable set $\Gamma(H)$ in $\B$ is $1_\B$ because $\Gamma$ is a $\sigma$-complete embedding of $\A$ into $\B$.

We will now prove that $\A$ is atomless. Suppose for a contradiction that there exists some $[B] \in \A$ which is an atom. Then for every $C \in \mathcal{B}(X)$ one of the following hold: $B \cap C \in \mathcal{I}$ or $B \setminus C \in \mathcal{I}$. Notice that for every point $x \in X$ we have that $\{x\} \in \mathcal{I}$ since $g^{-1}(x)$ is meager in $Z$ (recall that $g$ is nowhere locally constant). We claim that for every $x \in B$ there exists some neighbourhood $V_x$ such that $B \cap V_x \in \mathcal{I}$. If this is not the case, then for some $x$ for every $\frac{1}{n}$-neighbourhood $U_\frac{1}{n}$ of $x$ we have that $U_\frac{1}{n}^c \cap B \in \mathcal{I}$, which implies that $B \setminus \{x\} \in \mathcal{I}$, a contradiction. For every $x \in B$ fix a neighbourhood $V_x$ such that $B \cap V_x \in \mathcal{I}$, choosing a countable cover of $B$ from $\{V_x \mid x \in B\}$ yields that $B \in \mathcal{I}$, a contradiction.

By Lemma \ref{lemma: complete subalgebra of the category algebra}, the Boolean algebra $\A$ is isomorphic to $\cat$ so by $\ref{B(X) modulo I = C}$ there exist $C,D \in \mathcal{B}(X)$ such that $C \s A \s D$ and $D \setminus C \in \mathcal{I}$. The latter implies that $g^{-1}(A)$ has the property of Baire because $g^{-1}(A) = g^{-1}(C) \cup g^{-1}(A \setminus C)$ and $g^{-1}(C)$ is Borel and $g^{-1}(A \setminus C)$ is meager (since it is covered by the set $g^{-1}(D \setminus C)$ which is meager by definition).

$\ref{M2-UB} \Rightarrow \ref{M2 topology on X}:$ Suppose that $\tau'$ is a second countable topology on $X$ such that $\mathcal{B}(X,\tau) \s \mathcal{BP}(X,\tau')$, we need to prove that $A$ has the property of Baire in the topology $\tau'$. Let us consider the identity function on $X$ in the following sense:
$$
id:(X,\tau') \longrightarrow (X,\tau).
$$

Then $id$ is a Baire measurable function by assumption so by \cite[Theorem 8.38.]{Ke} it is continuous restricted to a dense $G_\delta$ subspace $Y$ of $(X,\tau')$. Using that $A$ is second countable-universally Baire we get that $A \cap Y$ has the property of Baire in $Y$ and by Lemma $\ref{bairetultranz}$ the set $A \cap Y$ has the property  of Baire in $(X,\tau')$. The complement of $Y$ is meager, which finishes the proof.

$\ref{M2 topology on X} \Rightarrow \ref{Borel isomorphic preimage}:$ Suppose that we are given a Polish space $Y$ and a Borel isomorphism $f:Y \longrightarrow X$ onto $X$, we will show that $f^{-1}(A)$ has the property of Baire in $Y$. We define a topology $\tau'$ on $X$ such that for $U \s X$:

$$
U \in \tau' \overset{\textit{def}}{\Longleftrightarrow} f^{-1}(U) \textit{ is open in } Y.
$$

Since the function $f$ is a bijection, the Polish space $Y$ is homeomorphic to $(X,\tau')$, furthermore, $\mathcal{B}(X, \tau') = \mathcal{B}(X,\tau)$ which implies that $\mathcal{B}(X,\tau) \s \mathcal{BP}(X,\tau')$. It follows from $\ref{M2 topology on X}$ that $A$ has the property of Baire in $(X, \tau')$, so $f^{-1}(A)$ has the property of Baire in $Y$.

$\ref{B(X) modulo I = C} \Rightarrow \ref{B(A) modulo I = C}:$  Suppose that we are given a $\sigma$-ideal $\mathcal{I}$ on the Borel sets of $A$ such that $\mathcal{B}(A) / \mathcal{I} \simeq \cat$, then we define the following $\sigma$-ideal $\mathcal{J}$ on the Borel sets of $X$:
$$
\mathcal{J}:=\{B \in \mathcal{B}(X) \mid B \cap A \in \mathcal{I} \}.
$$

It is easy to see that $\mathcal{B}(X) / \mathcal{J} \simeq \mathcal{B}(A) / \mathcal{I} \simeq \cat$, hence there exist $C,D \in \mathcal{B}(X)$ such that $C \s A \s D$ and $D \setminus C \in \mathcal{J}$. Then $(D \setminus C) \cap A \in \mathcal{I}$, so $A \setminus C \in \mathcal{I}$, and we are done.

$\ref{B(A) modulo I = C} \Rightarrow \ref{M2 topology on A}:$ Suppose that we are given a second countable topology $\tau'$ on $A$ such that $\mathcal{B}(A, \tau \vert_A) \s \mathcal{BP}(A, \tau')$, we will prove that $A$ can be written as a $B \sqcup C$, where $B \in \mathcal{B}(X,\tau)$ and $C \in \mathcal{M}(A, \tau')$. As a first step, we get rid of the nonmeager points:
$$
\mathcal{H}:=\{U \in \tau' \mid \exists p \in U \textit{ such that } \{p\} \textit{ is $\tau'$-comeager in } U \textit{ and } U \textit{ is $\tau'$-nonmeager} \}.
$$

Notice that if $U \in \mathcal{H}$ then the witnessing $p$ is unique, so from now on we refer to a $U \in \mathcal{H}$ as $U_p$. Using that $\tau'$ is second countable, we can select countably many $U_p$-s such that:
$$
\cup\mathcal{H} = \underset{i \in I}{\cup} U_{p_i} \quad (\textit{where } I \textit{ is a countable set}).
$$

Let $Z:= A \setminus \underset{i \in I}{\cup} U_{p_i}$. First suppose that $Z \in \mathcal{M}(A,\tau')$.
We define the sets $C$ and $D$ as follows: 
$$
C:=\{p_i \mid i \in I \}, \quad D:= A \setminus C.
$$
Since the set $C$ is countable, we have that $C \in \mathcal{B}(X)$ and it is easy to see that $D \s \underset{i \in I}{\cup} (U_{p_i} \setminus \{p_i\}) \cup Z$, which is $\tau'$-meager. This finishes the proof in the case when $Z$ is $\tau'$-meager.

So let us suppose that the $\tau'$-closed set $Z$ is $\tau'$-nonmeager, then the set 
$$
W:= \inter_{\tau'} Z
$$
is $\tau'$-nonmeager, since the boundary of a closed set is nowhere dense. Hence, $W$ is also $\tau'\vert_W$-nonmeager. We claim that every point is $\tau' \vert_W$-meager in $W$. Indeed, suppose for a contradiction that there is a point $p \in W$ which is not $\tau' \vert_W$-meager, then $\{p\}$ has the property Baire and there exists a $\tau'$-nonmeager open set $U \s W$ such that $\{p\} \vartriangle U$ is $\tau' \vert_W$-meager. The latter implies that $p \in U$ and $p$ is $\tau' \vert_W$-comeager in the $\tau' \vert_W$-nonmeager open set $U$ which implies that $U \in \mathcal{H}$, hence $p \in \underset{i \in I}{\cup} U_{p_i}$, contradicting that $p \in W$. The space $W$ is second countable, hence by Lemma \ref{category algebra as borel mod meager} we obtain that the Boolean algebra
$$
\B:= \mathcal{BP}(W, \tau' \vert_W)/ \mathcal{M}(W, \tau' \vert_W) \simeq \cat.
$$

Now let us define a $\sigma$-ideal $\mathcal{I}$ on $\mathcal{B}(A,\tau \vert_A)$ as follows:
$$
\mathcal{I}:=\{ B \in \mathcal{B}(A,\tau \vert_A) \mid B \cap W \in \mathcal{M}( W, \tau' \vert_W) \}.
$$

Our goal is to prove that $\A:= \mathcal{B}(A) / \mathcal{I} \simeq \cat$. As a first step, we embed $\A$ into $\B$ as follows:
$$
\Gamma:[B] \in \mathcal{B}(A) / \mathcal{I} \mapsto [B \cap W]  \in \mathcal{BP}(W, \tau' \vert_W)/ \mathcal{M}(W, \tau' \vert_W).
$$

It is easy to see that $\Gamma$ is well-defined and $\Gamma$ is actually a $\sigma$-complete embedding of $\A$ into $\B$. From now on we identify $\A$ with its $\Gamma$ image in $\B$. Since the Boolean algebra $\B$ is ccc, we obtain that $\A$ is also ccc, and using that $\A$ is a $\sigma$-complete Boolean algebra we get that $\A$ is a complete Boolean algebra. 

We prove now that $\A$ is a complete subalgebra of $\B$ (we use the same argument as in $\ref{B(X) modulo I = C} \Rightarrow \ref{M2-UB}$). By Exercise 7.31 in \cite{Jech} if $\Tilde{\mathbb{A}}$ is a regular subalgebra of $\Tilde{\mathbb{B}}$, then the inclusion map extends to a unique complete embedding of the completion of $\Tilde{\mathbb{A}}$ to the completion of $\Tilde{\mathbb{B}}$. Using the previously mentioned result, it is enough to prove that for every maximal antichain $H \s \A$, the set $H$  forms a maximal antichain in $\B$, too. But this is easy since $\A$ is ccc, hence $H$ is a countable set and the sup of the countable set $H$ in $\B$ and in $\A$ is the same (namely the maximal element $1_\B$ of the Boolean algebra) because $\Gamma$ is a $\sigma$-complete embedding of $\A$ into $\B$.

We prove now that $\A$ is atomless. Suppose for a contradiction that 
$\A$ has an atom, that is, there exists $B \in \mathcal{B}(A)$ such that $B \notin \mathcal{I}$ but for every $B' \in \mathcal{B}(A)$ either $B \cap B' \in \mathcal{I}$ or $B \setminus B' \in \mathcal{I}$. We claim that for every $x \in B \cap W$ there exists a $\tau \vert_A$-open neighbourhood $V_x$ of $x$ such that $B \cap V_x \in \mathcal{I}$. Indeed, suppose for a contradiction that there exists a point $x \in B \cap W$ such that for every $\tau \vert_A$-open neighbourhood $V$ of $x$ we have that $B \cap V \notin \mathcal{I}$. If we fix a countable $\tau \vert_A$-open neighbourhood basis $\{V_n\}_{n \in \omega}$ of $x$ we have that:
$$\underset{n \in \omega}{\cup} (B \setminus V_n) = B \setminus \{x\} \in \mathcal{I}.$$

This implies that $\{x\} \notin \mathcal{I}$ which contradicts the fact that every point in $W$ is meager.

So according to the previous claim, let us fix for every $x \in B \cap W$ an open neighbourhood $V_x$ of $x$ such that $B \cap V_x \in \mathcal{I}$. Using that the topology $\tau \vert_A$ is second countable, we can select a countable cover $\{V_{x_i}\}_{i \in \omega}$ of $B \cap W$ which immediately implies that $B \in \mathcal{I}$, a contradiction.

All in all, we have proved that $\A=\mathcal{B}(A)/ \mathcal{I}$ is isomorphic to a complete atomless subalgebra of $\cat$, hence by Lemma \ref{lemma: complete subalgebra of the category algebra} the Boolean algebra $\mathcal{B}(A)/ \mathcal{I} \simeq \cat$. Applying the assumption we obtain a (global) Borel set $C' \in \mathcal{B}(X,\tau)$ and a (local) Borel set $D' \in \mathcal{B}(A,\tau\vert_A)$ such that $A = C' \sqcup D'$ and $D' \in \mathcal{I}$. Let us define $C$ and $D$ as follows:
$$
C:=C' \cup \{p_i\mid i \in I \}, \quad \quad \quad D:= A \setminus C.
$$

It is clear that $C \in \mathcal{B}(X)$, $D \in \mathcal{B}(A)$ and $A=C \sqcup D$. Let us show that $D \in \mathcal{M}(A,\tau')$. First, note that $D \cap W = D \cap W'$ and $A$ is the disjoint union of $\underset{i \in I}{\cup} U_{p_i}$, $\partial_{\tau'}W$ and $W$, hence the set $D$ can be covered as follows:
$$
D \s \underset{i \in I}{\cup} (U_{p_i} \setminus \{p_i\}) \cup \partial_{\tau'} W  \cup (D' \cap W ).
$$

The sets $\underset{i \in I}{\cup} (U_{p_i} \setminus \{p_i\})$ and $\partial_{\tau'} W$ are meager (in the topology $\tau')$, because the first set is a countable union of meager sets and the second set is the boundary of an open set. To see that $D' \cap W$ is also meager, it is enough to note that $D' \in \mathcal{I}$, which is equivalent to $D' \cap W \in \mathcal{M}(W, \tau' \vert_W)$ and this implies that $D' \cap W \in \mathcal{M}(A, \tau')$. We have proved that $D \in \mathcal{M}(A,\tau')$, which finishes the proof.

$\ref{M2 topology on A} \Rightarrow \ref{Borel isomorphic preimage local}:$  Take an arbitrary Polish space $Y$ and a subset $Z \s Y$ with a Borel isomorphism $f:Z \longrightarrow A$ onto $A$, we have to prove that $Z$ has the property of Baire in $Y$. We define a topology $\tau'$ on $A$ such that for $U \s A$:
$$
U \in \tau' \overset{\textit{def}}{\Longleftrightarrow} f^{-1}(U) \textit{ is open in } Z.
$$

The space $(A, \tau')$ is homeomorphic to $Z$ (witnessed by the function $f$), hence $\tau'$ is second countable. Also note that $\mathcal{B}(A)= \mathcal{B}(A, \tau') \s \mathcal{BP}(A, \tau')$, so by assumption the set $A$ can be written as $A= C \sqcup D$ where $C \in \mathcal{B}(X)$ and $D \in \mathcal{M}(A, \tau')$. Since the function $f$ is a Borel isomorphism, the set $f^{-1}(C) \in \mathcal{B}(Y)$, on the other hand, $f$ is a homeomorphism (between the appropriate spaces), hence $f^{-1}(D) \in \mathcal{M}(Z) \s \mathcal{M}(Y)$. All in all, the set $Z$ can be written  as the union of a Borel set $f^{-1}(C)$ and a meager set $f^{-1}(D)$ (in $Y$), hence the set $Z$ has the property of Baire in $Y$.

$\ref{Borel isomorphic preimage local} \Rightarrow \ref{Borel isomorphic preimage}:$ Take an arbitrary Polish space $Y$ and a Borel isomorphism $f:Y \longrightarrow X$ onto $X$, we shall prove that $f^{-1}(A)$ has the property of Baire in $Y$. If $g$ denotes the restriction of $f$ to $f^{-1}(A)$ then $g$ is a Borel isomorphism between $f^{-1}(A)$ and $A$, hence $f^{-1}(A)$ has the property of Baire in $Y$.

$\ref{M2-UB} \Rightarrow \ref{polishunivbaire}:$ This implication is clear because every Polish space is second-countable.

$\ref{polishunivbaire} \Rightarrow \ref{Borel isomorphic preimage}:$ By Theorem \ref{Borel preimages theorem} we know that every Polish-universally Baire set is also Polish-Borel-universally Baire, hence if $A \s X$ is Polish-universally Baire and we have a Borel isomorphism $f:Y \longrightarrow X$ then $f^{-1}(A)$ has the property of Baire in $Y$.

$\ref{Borel isomorphic preimage} \Rightarrow \ref{Polish topology refinement}:$ If we take an arbitrary Polish topology $\tau'$ on $X$ such that $\mathcal{B}(X)= \mathcal{B}(X,\tau')$ (including the case, when $\tau' \supseteq \tau$), then the identity map from $(X,\tau'$) to $(X,\tau)$ is a Borel isomorphism, so by assumption the set $A$ has the property of Baire in $(X,\tau')$.

$\ref{Polish topology refinement} \Rightarrow \ref{Borel isomorphic preimage}:$ Consider an arbitrary Polish space $Y$ and a Borel isomorphism $f:Y \longrightarrow X$ onto $X$, we have to show that $f^{-1}(A)$ has the property of Baire in $Y$. As a first step, define the topology $\overline{\tau}$ on $X$ such that for $U \s X:$
$$
U \in \overline{\tau} \overset{\textit{def}}{\Longleftrightarrow} f^{-1}(U) \textit{ is open in } Y.
$$

It is clear that $Y$ is homeomorphic to the space $(X,\overline{\tau})$ (witnessed by the function $f$) and it is enough to prove that the set $A$ has the property of Baire in $(X,\overline{\tau})$. Consider the identity function as:
$$
id: (X,\overline{\tau}) \longrightarrow (X,\tau).
$$
Since $id$ is Borel, there exists a dense $G_\delta$ subspace $G$ in the topology $\overline{\tau}$ such that $id \vert_G$ is continuous (see \cite[Theorem 8.38.]{Ke}), in other words, $\overline{\tau} \vert_G \supseteq \tau \vert_G$. Since the set $G$ is Borel in the topology $\tau$, there exists a Polish topology $\sigma$ on $X$ such that $\sigma \supseteq \tau$ and $G$ is a clopen set in $\sigma$ (see \cite[Theorem 13.1]{Ke}). 

Let us define the topology $\tau'$ on $X$ as the disjoint union of $(G,\overline{\tau} \vert_G)$ and $(G^c, \sigma \vert_{G^c})$. The topology $\tau'$ is Polish as it was obtained as a disjoint union of two Polish topologies and we also have that $\tau' \supseteq \tau$. By assumption, the set $A$ has the property of Baire in $(X,\tau')$. Using that $(X,\tau)$ is a disjoint union, the set $A \cap G$ has the property of Baire in $(G, \overline{\tau} \vert_G)$. By Lemma \ref{bairetultranz} the set $A \cap G$ has the property of Baire in $(X, \overline{\tau})$ and using that the set $G$ is comeager in $(X,\overline{\tau})$ we can conclude that $A$ has the property of Baire in $(X,\overline{\tau})$, which finishes the proof.
\end{proof}


Finally, let us finish this section with two examples, where we show that (local) Borel injective images/preimages cannot describe Polish-universally Baire sets, hence, the Borel isomorphism cannot be changed to Borel injection in Theorem \ref{thm: boole equivalence} \ref{Borel isomorphic preimage local}.

The first example shows that there is a Polish-universally Baire set such that it has a Borel injective preimage that does not have the property of Baire.

\begin{ex}
    There exists a set $B \s [0,1)$ and a Borel injection $f:B \longrightarrow [0,1)$ such that $f^{-1}([0,1))=B$ and $B$ does not have the property of Baire in $[0,1)$.
\end{ex}

\begin{proof}
    Let $f(x) = 2x \; mod \; 1$ and let $B' \s [0,\frac{1}{2})$ be any set without the property of Baire and set $B:=B' \cup (([0,\frac{1}{2}) \setminus B') + \frac{1}{2})$.
\end{proof}

The second example shows that there exists a Polish-universally Baire set such that it has a Borel injective image that is not Polish-universally Baire.

\begin{ex}
    There exists a Polish-universally Baire set $A \s [0,1]^2$ and a continuous injection $f:A \longrightarrow [0,1]$ such that $f(A)$ does not have the property of Baire.
\end{ex}

\begin{proof}
    By \cite[Theorem 1.]{Grzunivmeager} there exists a universally meager set $B \s [0,1]$ of cardinality $\non(\mathcal{M})$ and let us fix a set $C \s [0,1]$ of cardinality $\non(\mathcal{M})$ such that $C$ does not have the property of Baire. Let $f$ be an arbitrary bijection from $B$ to $C$ and let $A:=\text{graph($f$)}$ (hence $A \s [0,1]^2$). 

    We will prove that $A$ is universally meager. Let us recall \cite[Theorem 2.1.]{Zak1} which says that a subset $M \subseteq X$ of a perfect Polish space $X$ is universally meager if and only if there is no nonmeager set $D \subseteq X$ and Borel injection $g: D \to M$. Building on the standard Peano curve constructions, it is easy to fix a Borel bijection $h:[0,1] \to [0,1]^2$ such that $Z \s [0,1]$ is meager if and only if $h(Z) \subseteq [0,1]^2$ is meager. Suppose for a contradiction that $A$ is not universally meager, then by the above theorem there exists a nonmeager $D \s [0,1]^2$ and a Borel injection $g:D \to A$. If $\pi_1$ denotes the projection from $[0,1]^2$ on the first coordinate, then $\pi_1 \circ g \circ h : h^{-1}(D) \to  B$ is a Borel injection from the nonmeager $h^{-1}(D)$ into $B$, a contradiction.
    
    Finally, notice that $C$ is the continuous injective image of $A$, since $C = \pi_2(A)$, where $\pi_2$ denotes the projection from $[0,1]^2$ on the second coordinate.
\end{proof}

\customsection{Simultaneous reflection of everywhere nonmeagerness}{Simultaneous reflection of everywhere nonmeagerness}{Reflection of everywhere nonmeagerness}\label{Reflection section}

In this section we take a detour and prove a seemingly unrelated result that we will need in the proof of the fact that an implication above can consistently be reversed. 

Bartoszy\'nski \cite{Ba} proved that the following statement is consistent: if a set $A \subseteq \N^\omega$ is nonmeager then there exists a nowhere dense Cantor set $C \s \bs$ such that $A \cap C$ is also nonmeager relative to $C$. (In fact, Bartoszy\'nski only states that $C$ is compact, which is a typo, but he actually produces a nowhere dense Cantor set.) For two more proofs of this theorem see Burke-Miller \cite{Miller} and Szládek \cite{Szl}, and for some strengthenings see Ciesielski-Shelah \cite{CieShe} and Elekes-Stepr\=ans \cite{ElekStep}. What we will need is a slightly modified version of Bartoszy\'nski's result. On the one hand, we need to replace nonmeagerness by everywhere nonmeagerness, and on the other hand, we need to find a Cantor set that works simultaneously for more than one everywhere nonmeager sets. The proof we modify is the one by Szládek, and since it is unpublished and only exists in Hungarian, we have decided to write out all details for completeness.

\subsection{A single step in the iteration}

We will use the \emph{Miller forcing}, which we denote by $\mathbb{M}$. The elements of $\mathbb{M}$ are nonempty subtrees of $\mathbb{N}^{<\omega}$ with some extra properties, ordered under inclusion. For $p \in \mathbb{M}$ we require that for all $\sigma \in p$ the set $\rakovetkezo_p(\sigma)$ of immediate successors has cardinality either 1 or $\omega$, and $\forall \sigma \in p \; \exists \tau \in p$ such that $\tau \succ \sigma$ and $\tau$ has infinitely many immediate successors. Furthermore, for some technical reasons we will also require for a Miller tree $p$ to be \emph{monotone}, that is, $\sigma(k) < \sigma(k+1)$ holds $\forall \sigma \in p$ and $\forall k,k+1 \in \dom(\sigma)$. It is easy to see that adding this monotonicity assumption yields an isomorphic poset. If a node $\sigma \in p$ has infinitely many immediate successors, then we call that node a \emph{branching node} of $p$. The minimal (in the tree ordering) branching node of a $p \in \mathbb{M}$ is denoted by $\stem(p)$.

If we have a Miller tree $p$, then by \emph{thinning out $p$ at height $h$ for the value $j$} we mean the following tree:
$$
\{\sigma \in p \mid \exists \tau \in p, \tau \succ \sigma,  \abs{\tau} > h, \tau(h) > j \}.
$$

It is easy to see that if we thin out a Miller tree then the result is also a tree. Furthermore, it is easy to verify (using monotonicity) that if we thin out $p$ at height $h \geq \abs{\stem(p)}$, then the resulting tree is again a Miller tree and every branching node $\sigma \in p$ with $\abs{\sigma} \leq h$ remains a branching node of the resulting tree.

\begin{definition}
We say that a set is \emph{everywhere nonmeager}, if it is nonmeager in every nonempty open set.
\end{definition}

\bt
\label{t:one-step}
Let $V$ be the ground model and suppose that $A \in V$ and $V \vDash A \s \N^\omega$ is everywhere nonmeager in $\N^\omega$. If $\mathbb{M}$ denotes the Miller forcing, $m$ denotes the Miller real, $C_m:= \prod\limits_{i \in \omega} [m(i)+1]$ and $G \subseteq \mathbb{M}$ is a generic filter, then $V[G] \vDash A \cap C_m$ is everywhere nonmeager in the nowhere dense Cantor set $C_m$.
\et

\begin{proof}
Suppose the contrary. Then there exists $s \in \N^{<\omega}$ such that $V[G] \vDash A \cap C_m \cap [s] \text{ is of first category in } C_m \cap [s] \neq \emptyset$, that is, there is a dense $G_\delta$ subspace of $C_m \cap [s]\neq \emptyset$ which is disjoint from $A \cap C_m \cap [s]$. In $V[G]$, let $U_1 \supseteq U_2 \supseteq ...$ be open sets in $\N^\omega$ such that for all $n$ we have $U_n \s B_\frac{1}{n}(C_m)$ (the $\frac{1}{n}$-neighborhood of the set $C_m$) and $U_n \cap C_m \cap [s]$ is dense in $C_m \cap [s] \neq \emptyset$ but $A \cap \bigcap\limits_{n \in \omega} U_n  \cap [s] = \emptyset$. Then, for the rest of the proof, we can fix $p^\star \in G$ such that 
$$
p^\star \Vdash  \dot{U_0} \supseteq \dot{U_1}... \text{ are open sets such that } \forall n \ \dot{U_n} \s \dot{B_\frac{1}{n}(C_m)},
$$
$$
\dot{U_n} \cap \dot{C_m} \cap [s] \text{ is dense in } \dot{C_m} \cap [s] \neq \emptyset, \text{ and } A \cap \bigcap\limits_{n \in \omega} \dot{U_n} \cap [s] = \emptyset.
$$

For every $p \le p^\star$ define (in $V$) the set
\begin{equation}\label{rpus}
R_p:=R_{p,\dot{\{U_n\}_{n \in \omega}}, s}:=\{r \in \N^\omega \mid \exists q \leq p: \; q \Vdash r \in \bigcap\limits_{n \in \omega} \dot{U_n} \cap [s] \}.
\end{equation}

In two difficult technical lemmas we will prove in the next subsections that $R_p \subseteq [s]$ is nonmeager for every $p \le p^\star$ (Lemma \ref{l:nonmeager}) and also that $R_p$ is analytic for dense many $p \leq p^\star$ (Lemma \ref{l:analytic}). Since every analytic set possesses the property of Baire, this implies that $R_p$ is comeager in a nonempty open subset of $[s]$ for dense many $p \leq p^\star$. Hence $A \cap R_p \neq \emptyset$ for dense many $p \leq p^\star$. Fix such a $p \leq p^\star$, and pick $r \in A \cap R_p$. Then $r \in A$ and there exists $q \le p$ such that $q \Vdash r \in  \bigcap\limits_{n \in \omega} \dot{U_n} \cap [s]$, hence $q \Vdash A \cap  \bigcap\limits_{n \in \omega} \dot{U_n} \cap [s] \neq \emptyset$. But $q \leq p^\star$, hence $q \Vdash A \cap  \bigcap\limits_{n \in \omega} \dot{U_n} \cap [s] = \emptyset$, a contradiction.
\end{proof}

\subsection{\texorpdfstring{$R_p$}{a} is nonmeager}

In this subsection we prove the following lemma promised above.

\begin{lemma}
\label{l:nonmeager}
$R_p$ is nonmeager for every $p \le p^\star$.
\end{lemma}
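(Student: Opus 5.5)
The plan is to argue by contradiction and build a real $r\in V$ together with a condition $q\le p$ by a fusion argument. Suppose $R_p$ is meager in $V$. Then there are closed nowhere dense sets $F_0\subseteq F_1\subseteq\dots$ in $\N^\omega$, coded in $V$, with $R_p\subseteq\bigcup_k F_k$. In $V$ we will construct finite sequences $s=t_0\prec t_1\prec t_2\prec\dots$ and Miller conditions $p=q_0\ge q_1\ge q_2\ge\dots$ with the following properties:
\begin{itemize}
\item $[t_{k+1}]\cap F_k=\emptyset$;
\item $q_{k+1}\Vdash [t_{k+1}]\subseteq \dot U_k$;
\item $q_{k+1}$ forces $t_{k+1}(i)\le m(i)$ for all $i<|t_{k+1}|$, that is, $[t_{k+1}]\cap C_m\neq\emptyset$;
\item the sequence $(q_k)$ is a fusion sequence, so it has a lower bound $q\le p$.
\end{itemize}
Then $r:=\bigcup_k t_k\in[s]$ avoids every $F_k$, while $q\Vdash r\in\bigcap_n\dot U_n\cap[s]$ because the $\dot U_n$ are decreasing. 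Hence $r\in R_p\setminus\bigcup_kF_k$, which is the desired contradiction.

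A single step goes as follows. Assume $q_k$ forces $[t_k]\cap \dot C_m\neq\emptyset$. Since $p^\star$ forces that $\dot U_k\cap \dot C_m\cap[s]$ is dense in $\dot C_m\cap[s]$, it is forced that $\dot U_k$ meets $[t_k]\cap\dot C_m$. So some $q'\le q_k$ decides a basic open set $[t']\subseteq\dot U_k$ with $t'\succ t_k$ and with $t'(i)\le m(i)$ forced. Because $F_k$ is nowhere dense, we can then extend $t'$ to some $t_{k+1}$ with $[t_{k+1}]\cap F_k=\emptyset$. The new entries of $t_{k+1}$ may be large and need not lie below $m$ as decided so far. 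This is exactly where the thinning operation and the monotonicity of the trees are used. We thin out $q'$ at heights $h<|t_{k+1}|$ with $h\ge|\stem(q')|$ for the values $t_{k+1}(h)$. The result is a Miller condition that forces $m(h)>t_{k+1}(h)$, so $[t_{k+1}]\cap C_m\neq\emptyset$ is forced, and every branching node of length at most $h$ is preserved. Entries of $t_{k+1}$ below $|\stem(q')|$ are already forced to be at most $m$, since they come from $t'$ and $t_k$.

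The main obstacle is the fusion. Deciding $t'$ requires passing to a condition below one successor of a branching node. Done naively, the stems grow at every step, and the intersection of the $q_k$ may collapse to a single branch rather than a Miller tree. To avoid this, at stage $k$ I would fix the finitely many branching nodes of the first $k$ splitting levels together with finitely many successors of each, and perform the decision simultaneously below each of these finitely many nodes. The goal is to obtain one common $t'$ that works for all of them, so that the resulting condition still keeps those nodes branching.

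The difficulty is that different parts of the tree may decide different $t'$. My first attempt is to exploit the freedom in the choice of $t'$. The set $\dot U_k$ is open and dense near $C_m\cap[t_k]$, and $C_m\cap[t_k]$ depends only on $m$, which can always be thinned to be large. So I would first decide a candidate below one part of the tree. Then, below each of the other finitely many parts, I would extend it further to a common refinement. This uses that, after thinning, every part forces $[t']\cap C_m\neq\emptyset$, so density applies again in each part. Iterating over the finitely many parts yields one $t_{k+1}$ that works for all of them. The infinitely many remaining successors at the current levels are then kept by a pure-decision style argument: for each successor we decide separately, then use pigeonhole on the countably many possible $t'$ and an infinite subset of successors, refining to a common extension. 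If this fails, the fallback is to replace the single real $r$ by an argument producing a nonmeager set of such $r$ via a Banach–Mazur game played in $V$ against the $F_k$.
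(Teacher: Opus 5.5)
Your overall frame (build $r\in V$ together with a fusion limit $q\le p$, and thin using monotonicity) matches the paper. However, there is a genuine gap exactly at the step you flagged, and it cannot be closed the way you suggest.

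You require that each fusion condition $q_{k+1}$, \emph{as a whole}, forces $[t_{k+1}]\s\dot U_k$. Nothing in the properties of $\dot U_n$ that are used prevents $\dot U_k$ from depending on the generic real at the levels you want to keep branching. For example,
\[
\dot U_k:=B_{1/(k+1)}(\dot C_m)\setminus\{x\in\N^\omega : x(i)=0 \text{ for all } i\ge \dot m(k)\}
\]
is forced to be open, decreasing in $k$, and dense in $\dot C_m$ (as $m(i)\ge 1$ for $i\ge 1$). Suppose a condition $q$ forces $[t]\s\dot U_k$. Then it forces $t^\smallfrown 0^\infty\in\dot U_k$, i.e.\ $\dot m(k)<\abs{t}$. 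By monotonicity, a branching node of $q$ of length $\le k$ would make $\dot m(k)$ unbounded below $q$, so $\abs{\stem(q)}>k$. Hence a common lower bound of your $q_{k+1}$'s would have a stem longer than every $k$, which is absurd.

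So no simultaneous decision or pruning can realize your scheme. Concretely, your pigeonhole step fails: there are infinitely many possible $t'$, so you get no infinite set of successors sharing one $t'$, and iterating common refinements infinitely often yields a real rather than a finite sequence. The Banach--Mazur fallback is not an argument.

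The missing idea is that $r\in R_p$ only needs $q\Vdash r\in\dot U_n$, not that $q$ decide an initial segment of $r$ lying inside $\dot U_n$. The paper therefore forces $[r_n]\s\dot U_n$ only locally. At stage $n+1$ it takes a fresh successor $\sigma$ of an earlier branching node $t_{k(n+1)}$; the bookkeeping function $k$ revisits every $t_l$ infinitely often. It then finds $q_{n+1}\le p'_{n+1}[\sigma]$ and $r''_{n+1}$ with $q_{n+1}\Vdash[r''_{n+1}]\s\dot U_{n+1}$, and puts $t_{n+1}:=\stem(q_{n+1})$ with $p_{n+1}[t_{n+1}]=q_{n+1}$. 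The rest of the tree is merely thinned at finitely many heights above all frozen nodes, so that $[r_{n+1}]\cap\dot C_m\ne\emptyset$ stays forced everywhere.

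The limit $q$ is the tree generated by $\{t_n\}$. Any $q'\le q$ has some $t_k$ with $k>n$ as a branching node, and $q'[t_k]\le q_k\Vdash r\in[r_k]\s\dot U_k\s\dot U_n$. So $q\Vdash r\in\dot U_n$ by density. The stems of the $q_n$ may grow without bound (as they must in the example above) without harming the fusion.

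A smaller point: entries of $t_{k+1}$ at positions in $[\abs{t'},\abs{\stem(q')})$ are not automatically $\le m$. You should pad $t'$ with zeros up to $\abs{\stem(q')}$ before escaping $F_k$, as the paper does with its zero-padding of $r_n$.
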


\begin{proof}
Let us start with two easy lemmas whose proofs are left to the reader.

\bl\label{ekviforszolás}
Given $\varrho \in \N^{<\omega}$ and $d \in \mathbb{M}$, the following are equivalent.
\begin{equation}\label{ekvi1}
  d \Vdash  \dot{C_m} \cap [\varrho]  \neq \emptyset,
\end{equation}
\begin{equation}\label{ekvi2}
   \forall i < \abs{\varrho} \ \ d \Vdash \varrho(i) \leq \dot{m}(i),
\end{equation}
\begin{equation}\label{ekvi3}
    \forall \sigma \in d \ \forall i < \min\{\abs{\varrho}, \abs{\sigma} \} \ \ \varrho(i) \leq \sigma(i).
\end{equation}
\el

\bl\label{rtovábbterjesztés}
Let $\dot{U}$ be a name for an open subset of $\N^\omega$, $\varrho \in \N^{<\omega}$ and $d \in \mathbb{M}$. If 
$$
d \Vdash \dot{U} \cap \dot{C_m} \cap [\varrho] \text{ is dense open in } \dot{C_m} \cap [\varrho] \neq \emptyset
$$ then there exists $d' \leq d$ and $\varrho' \succ \varrho$ such that
$$
d' \Vdash  \dot{C_m} \cap [\varrho'] \neq \emptyset \text{ and } [\varrho'] \s \dot{U}.
$$
\el

\bigskip

Let us now fix $p \le p^\star$ and prove that $R_p$ is nonmeager. We need to show that whenever $V_0 \supseteq V_1 \supseteq \ldots$ is a decreasing sequence of dense open sets in $\N^\omega$ then $\bigcap_n V_n  \cap R_p \neq \emptyset$. Fix such a sequence $\{V_n\}_{n \in \omega}$. On the one hand, we will construct an increasing sequence of  finite sequences $r_0 \prec r_1 \prec \ldots \in \N^{<\omega}$ such that $s \prec r_0$ and $[r_n] \s V_n$ for every $n$, and hence for  $r = \bigcup_n r_n$ we will have $r \in \bigcap_n V_n$. On the other hand, we will construct a decreasing sequence of Miller trees $p \geq p_0 \geq p_1 \geq....$ by a standard fusion argument in such a way that we will also have a Miller tree $p_\infty$ such that $p_n \geq p_\infty$ for every $n$ and $p_\infty$ will force that $r \in \bigcap_n \dot{U_n} \cap [s]$, showing that $r \in R_p$. Thus we will have $r \in \bigcap_n V_n  \cap R_p$, hence $\bigcap_n V_n  \cap R_p \neq \emptyset$, finishing the proof.

To build $p_\infty$ we will recursively choose vertices $t_0,t_1,... \in \N^{<\omega}$ that will be the branching nodes of $p_\infty$.

Now we carry out the recursive construction to obtain sequences $p_n, q_n \in \mathbb{M}, \;t_n, r_n \in \mathbb{N}^{<\omega}$. We require the following:
\begin{equation}\label{r0 kiterjeszti s-et}
r_0 \succ s,    
\end{equation}

\begin{equation}\label{rn része Vn}
    [r_n] \s V_n \quad \quad \forall n,
\end{equation}

\begin{equation}\label{rn-tl hossz}
    \abs{r_n} \geq \abs{t_l} \quad \quad \forall l \leq n,
\end{equation}

\begin{equation}\label{r Cm-beli}
    p_n \Vdash r_n(i) \leq \dot{m}(i) \quad \quad \forall i < \abs{r_n},
\end{equation}

\begin{equation}\label{branching}
t_l \in p_n \text{ is a branching node of } p_n \quad \quad \forall l \leq n,
\end{equation}

\begin{equation}\label{qn-pn relation}
    p_n[t_n] = q_n \quad \quad \forall n,
\end{equation}

\begin{equation}\label{qn forszolja rn része Un}
    q_n \Vdash [r_n] \s \dot{U_n} \quad \quad \forall n,
\end{equation}

\begin{equation}\label{rn increasing}
    \abs{r_{n+1}} > \abs{r_n}.
\end{equation}

Let $k:\omega \setminus \{0\} \longrightarrow \omega$ be a function such that $k(n+1) \leq n$ for all $n$ and every $l \in \omega$ comes up as a value of $k$ infinitely many times. This function will be a bookkeeping device, it will make sure that each $t_l$ is ``visited'' infinitely many times: if we have constructed $t_l$ for $l \leq n$, then the new branching node $t_{n+1}$ will be constructed such that $t_{k(n+1)} \prec t_{n+1}$. 

For the start of the recursion, take an extension  $r'_0 \succ s$ such that $[r'_0] \s V_0$. We thin out $p$ at heights $h \in \{ \abs{s},...,\abs{r'_0}-1 \}$ for values $r'_0(h)$. Call the resulting tree $p'_0$. We thinned out $p$ in such a way that \eqref{ekvi3} holds for $d:=p_0'$ and $\varrho:=r_0'$, so by Lemma \ref{ekviforszolás} we have that $p_0' \Vdash \dot{C_m} \cap [r_0'] \neq \emptyset$. Applying Lemma \ref{rtovábbterjesztés} there exists $r''_0$ and $q_0 \leq p_0'$ such that $r''_0 \succ r_0'$, $q_0 \Vdash [r''_0] \cap \dot{C_m} \neq \emptyset \text{ and } [r''_0] \s \dot{U_0}$. Finally, let $p_0:=q_0$, $t_0:=\stem(p_0)$ and to get $r_0$, extend $r_0''$ by some 0-s to get $\abs{r_0} \geq \abs{t_0}$. It is easy to verify that $\eqref{r0 kiterjeszti s-et}-\eqref{rn increasing}$ hold for $q_0,t_0,p_0,r_0$.

We move towards how to construct $q_{n+1},t_{n+1}, p_{n+1}, r_{n+1}$ if we have constructed $q_l,t_l,p_l, r_l$ for all $l \leq n$. First, we take an arbitrary extension $r_{n+1}'$ of $r_n$ such that $\abs{r_{n+1}'} > \abs{r_n}$ and $[r_{n+1}'] \s V_{n+1}$. We thin out $p_n$ at heights $h \in \{ \abs{r_n},...,\abs{r_{n+1}'}-1 \}$ for values $r_{n+1}'(h)$ to get a Miller tree $p_{n+1}' \leq p_n$. After the thinning out, $d:=p_{n+1}'$ and $\varrho:=r'_{n+1}$ satisfy $\eqref{ekvi3}$, so every extension of $p'_{n+1}$ will force that $\dot{C_m} \cap [r_{n+1}']  \neq \emptyset$.

The node $t_{k(n+1)}$ is a branching node of $p_{n+1}'$ because we thinned out $p_n$ at such heights, where $h \geq \abs{t_j} \; (j \leq n)$, so we can choose a $\sigma \in \rakovetkezo_{p_{n+1}'}(t_{k(n+1)})$ such that there is no $t_j$ for which $\sigma \prec t_j$ ($j \leq n$). Since $p_{n+1}' \Vdash [r_{n+1}'] \cap \dot{C_m} \neq \emptyset$, so $p_{n+1}'[\sigma] \Vdash [r_{n+1}'] \cap \dot{C_m} \neq \emptyset$, applying Lemma \ref{rtovábbterjesztés} there exists $r_{n+1}''\succ r_{n+1}' $ and $q_{n+1} \leq p_{n+1}'[\sigma]$ such that:
\begin{equation}\label{alámenő fák forszolnak}
    q_{n+1} \Vdash \emptyset \neq [r_{n+1}''] \cap \dot{C_m} \text{ and } [r_{n+1}''] \s \dot{U_{n+1}}.
\end{equation}

We thin out $p_{n+1}'$ for $r_{n+1}''$ (as we thinned out $p_n$ for $r_{n+1}'$), that is, thin out at heights $h \in \{ \abs{r_{n+1}'},...,\abs{r_{n+1}''}-1 \}$ for values $j_h=r_{n+1}''(h)$, call the resulting Miller tree $p_{n+1}''$. Note that \eqref{ekvi3} is satisfied by the pair $(p_{n+1}'',r_{n+1}'')$. We can now define $t_{n+1}$ and $p_{n+1}$:
$$
t_{n+1}:=\stem(q_{n+1}),
$$

$$
p_{n+1}:= (p_{n+1}'' \setminus p_{n+1}''[\sigma]) \cup q_{n+1}.
$$
To get $r_{n+1}$, we extend $r_{n+1}''$ with some 0-s to satisfy condition \eqref{rn-tl hossz}, that is, $\abs{r_{n+1}} \geq \abs{t_{n+1}}$. We also make the following trivial observation:
\begin{equation}\label{obeservation}
    p_{n+1}[t_{n+1}]=q_{n+1} \text{  and  } p_{n+1} \leq p_n.
\end{equation}
\bigskip
It is clear that conditions \eqref{r0 kiterjeszti s-et}, \eqref{rn része Vn}, \eqref{rn-tl hossz}, \eqref{qn-pn relation}, \eqref{qn forszolja rn része Un}, \eqref{rn increasing} are satisfied. 
Since whenever we thinned out a Miller tree in the induction step from $n$ to $n+1$, we thinned out at height $h \geq \abs{t_j} (j \leq n)$, so every $t_j$ will be a branching node of $p_{n+1}$ and $t_{n+1}$ was chosen to be a branching node of $p_{n+1}$, so \eqref{branching} also holds. 

To see that condition \eqref{r Cm-beli} holds for $(p_{n},r_n)$, by Lemma \ref{ekviforszolás} we only need to check that \eqref{ekvi3} holds. As we have already checked, condition \eqref{r Cm-beli} holds, when $n=0$. So let us check that condition \eqref{ekvi3} holds for $(p_{n+1},r_{n+1})$. 
Recall that we always thinned out $p_n,p_{n+1}'$ so that condition \eqref{ekvi3} holds for pairs $(p'_{n+1},r_{n+1}')$ and $(p_{n+1}'',r_{n+1}'')$. Since $p_{n+1} \leq p_{n+1}''$, \eqref{ekvi3} holds for the pair $(p_{n+1}, r_{n+1}'')$. In the construction of $r_{n+1}$ we have seen that $r_{n+1}$ was made by extending $r_{n+1}''$ by some 0-s, therefore \eqref{ekvi3} holds for the pair $(p_{n+1},r_{n+1})$, showing that condition \eqref{r Cm-beli} is also satisfied by our recursive construction. 

Let 
$$
r:= \bigcup\limits_{i \in \omega} r_i
$$ 
and
$$
q:= \{\varrho \in \N^{<\omega} \mid \exists n \in \omega: \; \varrho \prec t_n \}.
$$

We claim that $q \Vdash r \in \bigcap\dot{U_n}$. For this, it is enough to see that $q \Vdash r \in \dot{U_n}$ for all $n \in \omega$. Suppose for a contradiction that there exists an $n$ such that $q \nVdash r \in \dot{U_n}$. In this case there exists a $q' \leq q$ for which $q' \Vdash r \notin \dot{U_n}$. Since all the branching nodes of $q$ are $\{t_k \mid k \in \omega\}$ there is a $k > n$ such that $t_k$ is a branching node of $q'$. Since $q' \leq q \leq p_k$, by \eqref{obeservation} we have that $q'[t_k] \leq p_k[t_k]= q_{k}$ and by \eqref{alámenő fák forszolnak} we have that $q'[t_k] \Vdash [r_k] \s \dot{U_{k}}$, but $\dot{\{U_j\}_{j \in \omega}}$ is decreasing, hence $q'[t_k] \Vdash r \in \dot{U_{k}}$, which is a contradiction. Since $s \prec r_0 \prec r$, $q \Vdash r \in [s]$.

This concludes the proof of Lemma \ref{l:nonmeager}.
\end{proof}

\subsection{\texorpdfstring{$R_p$}{b} is analytic for a dense set of \texorpdfstring{$p \le p^\star$}{c}}

The goal of this subsection is to prove the following lemma promised above.

\begin{lemma}
\label{l:analytic}
The set $\{p \le p^\star: R_p \textrm{ is analytic} \}$ is dense below $p^\star$.
\end{lemma}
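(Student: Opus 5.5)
Given $p_0\le p^\star$, I will find $p\le p_0$ for which the statement $\exists q\le p\ (q\Vdash r\in\bigcap_n\dot U_n\cap[s])$ can be expressed by an analytic formula in $r$. The obstruction is that the forcing relation for arbitrary names is not definable. So the first step is to replace the name $\dot{\{U_n\}}$ below some $p\le p_0$ by a continuous reading. Miller forcing has continuous reading of names. Each $\dot U_n$ is coded by the countable set $\dot W_n\subseteq\N^{<\omega}$ of basic sets $[\varrho]\subseteq\dot U_n$, so the whole sequence is coded by a real $\dot x\in 2^{\omega\times\N^{<\omega}}$. A standard fusion argument then gives $p\le p_0$ and a continuous function $F:[p]\to 2^{\omega\times\N^{<\omega}}$ (in $V$) with $p\Vdash \dot x=F(\dot m)$. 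Here $[p]$ is the set of branches of $p$, which is a closed subset of $\N^\omega$. Concretely, at stage $n$ of the fusion, below each of the finitely many current branching nodes and each of the first $n$ successors, I decide the first $n$ values of $\dot x$, exactly as in the construction of Lemma~\ref{l:nonmeager}. This density of $p$ admitting such an $F$ is the step I expect to need the most care. It is the usual Miller-forcing fusion and preserves Miller-ness by the thinning remarks made above.

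Next, fix such a $p$ and $F$. I claim that
$$R_p=\{r\in[s]\mid \exists q\le p,\ q\in\mathbb{M},\ \forall y\in[q]\ \forall n\ r\in U_n(F(y))\},$$
where $U_n(z)$ is the open set coded by $z$. For "$\supseteq$": if $q\le p$ and every branch $y$ of $q$ satisfies $r\in\bigcap_n U_n(F(y))$, then $q\Vdash r\in\bigcap_n\dot U_n$, because $\dot m\in[q]$ is forced and this $\Pi^1_1$-type statement about the real parameter $r$ is absolute. For "$\subseteq$": if $q\Vdash r\in\bigcap_n\dot U_n$, then $r\in U_n(F(y))$ is open in $y$ for each $n$. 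I expect to shrink $q$ to some $q'\le q$ all of whose branches satisfy it. Indeed, the set $\{y\in[q]: r\in\bigcap_n U_n(F(y))\}$ is $G_\delta$ in $[q]$. Its complement is $F_\sigma$, and if it met every $[q[\sigma]]$ in a non-"Miller-null" way, one $F_\sigma$ piece would contain a Miller subtree $q'$ forcing $r\notin\bigcap\dot U_n$, a contradiction. Here I use the standard fact that an analytic (in particular Borel) subset of $[q]$ either contains $[q']$ for some Miller $q'\le q$ or its complement does (Kechris' dichotomy for $K_\sigma$/Miller-regularity of Borel sets).

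Finally, the displayed set is the projection of the set $\{(r,q)\mid q\le p,\ \forall y\in[q]\ \forall n\ r\in U_n(F(y))\}$, with $q$ ranging over the Polish space of subtrees of $\N^{<\omega}$. Being a Miller tree is a $G_\delta$ (Borel) condition, and $q\le p$ is closed. The condition on $y$ is $\Pi^1_1$ in $(r,q)$. A projection of a $\Pi^1_1$ set is only $\Sigma^1_2$, so I will get analyticity by compactifying the quantifier: since $F$ is continuous and $r\in U_n(F(y))$ is an open condition, "$\forall y\in[q]$" can be replaced by a countable condition, asking that for each $n$ the tree $q$ be covered by finitely-decided witnesses. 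If that fails, I instead choose $q$ together with a witnessing assignment $\sigma\mapsto\varrho_{\sigma,n}\in W_n$ decided at nodes along $q$. This makes the inner condition Borel and the whole set $\Sigma^1_1$, hence $R_p$ is analytic.
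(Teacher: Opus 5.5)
Your first two steps are sound. Continuous reading of names for $\mathbb{M}$ is exactly what the paper's ``nice'' conditions provide: each coordinate of your $\dot x$ is decided along a barrier of $p$. The ``$\subseteq$'' direction of your displayed characterization also follows, from Miller-regularity of Borel sets together with $\Pi^1_1$-absoluteness.

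The genuine gap is the last step, which is where the real content of the lemma lies. Fix $r$ and $n$. The condition $\forall y\in[q]\ r\in U_n(F(y))$ says that the set
$D_n(r)=\{t\in p : \exists\varrho\prec r \text{ with } F(\cdot)(n,\varrho)\equiv 1 \text{ on } [p[t]]\}$
meets every branch of $q$. Equivalently, the subtree of $q$ avoiding $D_n(r)$ is well-founded. Miller trees are infinitely branching, so no K\"onig-type argument reduces this to a condition on finitely many levels. It is a genuinely $\Pi^1_1$ condition on $q$: already for a fixed open $O\subseteq\N^\omega$, the set $\{q\in\mathbb{M} : [q]\subseteq O\}$ can be $\Pi^1_1$-complete. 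So ``compactifying the quantifier'' is not available, and your displayed set is, as it stands, only $\Sigma^1_2$. Your fallback of attaching a witnessing assignment at nodes along $q$ does not help either. You would still have to demand that the witnessed nodes meet every branch of $q$, which is the same $\Pi^1_1$ requirement again.

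What is missing is a way to quantify over witnesses so that the barrier property becomes automatic. The paper existentially quantifies over sets $F_n\subseteq p$ (not to be confused with your function $F$) that form the successive levels of branching nodes of a Miller subtree:
\begin{itemize}
\item $\lvert F_0\rvert=1$;
\item each $u\in F_n$ has extensions in $F_{n+1}$ through infinitely many distinct successors of $u$, with pairwise meets exactly $u$;
\item every element of $F_{n+1}$ extends an element of $F_n$;
\item each $t\in F_n$ satisfies a condition that depends only on the fixed $p$ and is open in $r$. In the paper this is $p[t]\Vdash[r\vert_k]\subseteq\dot U_n$ for some $k$; in your notation it is $t\in D_n(r)$.
\end{itemize}
The downward closure of $\bigcup_n F_n$ is then a Miller tree $q\le p$, every branch of which passes through every $F_n$, so $q\Vdash r\in\bigcap_n\dot U_n$. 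Moreover, the set of such pairs $(r,(F_n)_n)$ is Borel in $\N^\omega\times(2^p)^\omega$, so its projection is analytic.

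The converse still needs an argument you have not given. Suppose some $q\le p$ forces $r\in\bigcap_n\dot U_n$. Then above every node of $q$ and for every $n$ there is a branching node of $q$ lying in $D_n(r)$. Choosing such nodes above the successors of the elements of $F_n$ builds $F_{n+1}$; this is Lemma~\ref{F_n} with Claims~\ref{vmilabel}--\ref{vmilabel2}. This construction also makes your appeal to the Kechris dichotomy in the second step unnecessary.
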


\bp
Recall that
$$
R_p = \{r \in \mathbb{N}^\omega \mid \exists q \leq p: \; q \Vdash r \in \bigcap\limits_{n \in \omega} \dot{U_n}  \cap [s] \},
$$
therefore, since $s \in \mathbb{N}^{<\omega}$ is in the ground model, 
$$
R_p = \{r \in \mathbb{N}^\omega \mid \exists q \leq p: \; q \Vdash r \in \bigcap\limits_{n \in \omega} \dot{U_n} \} \cap [s].
$$

Hence, it is enough to see that the following set is analytic for dense many $p \le p^\star$:
\begin{equation}\label{rsnélkül}
    Q_p:=\{r \in \mathbb{N}^\omega \mid \exists q \leq p: \; q \Vdash r \in \bigcap\limits_{n \in \omega} \dot{U_n} \}.
\end{equation}

We will need the following definition:
\bd
For a tree $T$ a subset $B \s T$ is called a \emph{barrier} if it intersects every infinite branch of $T$.
\ed

\bd
Let $p \le p^\star$ and $\dot{\{U_n\}_{n \in \omega}}$ be as above. Then we say that $p$ is \emph{nice with respect to} $\dot{\{U_n\}_{n \in \omega}}$ if for every $t \in \mathbb{N}^{< \omega} $ and every $n \in \omega$ the set
$$
B_{t,n}:=\{ \varrho \in p \mid p[\varrho] \Vdash [t] \s \dot{U_n} \text{ or } p[\varrho] \Vdash [t] \nsubseteq \dot{U_n} \}
$$
is a barrier in $p$. In such a situation we will say that $p[\varrho]$ decides $[t] \s \dot{U_n}$. 
\ed

We will prove on the one hand that the nice conditions are dense below $p^\star$, and on the other hand that $Q_p$ is analytic whenever $p$ is nice, which will complete the proof.

\bl
The set 
$$
\{p \leq p^\star \mid p \text{ is nice with respect to } \dot{\{U_n\}_{n \in \omega}} \}
$$
is dense below $p^\star$.
\el

\bp
Fix an arbitrary $p' \leq p^\star$ and let $\{(t_i, n_i)\}_{i \in \omega}$ be an enumeration of the elements of $\omega^{< \omega} \times \omega$.

We will inductively build a sequence of Miller trees $p_0 \geq p_1 \geq ...$ and sets $B_0, B_1, ...$ with the following properties:

\begin{equation}\label{hajjaj}
p_0 \leq p',
\end{equation}

\begin{equation}\label{Bi része pj}
\forall i,j \;  B_i \s p_j,
\end{equation}

\begin{equation}\label{Bi is a barrier in pi}
\forall i \; \  B_i \textrm{ is a barrier in } p_i,
\end{equation}

\begin{equation}\label{Bi-s are extended by some element from Bi+1}
    \forall \varrho \in B_i \; \exists^\infty \varrho' \in B_{i+1}:  \; \varrho \prec \varrho' \textrm{ and } \varrho \neq \varrho',
\end{equation}

\begin{equation}\label{Elements from Bi+1 are extensions of elements}
    \forall \varrho' \in B_{i+1} \; \exists \varrho \in B_i:\; \varrho \prec \varrho' \textrm{ and } \varrho \neq \varrho',
\end{equation}

\begin{equation}\label{Bi is an antichain}
    \forall i\textrm{ the elements of } B_i \textrm{ are incompatible,} 
\end{equation}

\begin{equation}\label{deciding}
\forall i \ \forall \varrho \in B_i \textrm{ the condition } p_i[\varrho] \textrm{ decides } [t_i] \s \dot{U_{n_i}}.
\end{equation}

Let us do the initial step of the induction. By the truth lemma, there is $p_0 \leq p'$ deciding $[t_0] \s \dot{U_{n_0}}$. Set $B_0:=\{\stem(p_0)\}$, then it is easy to check that \eqref{hajjaj}-\eqref{deciding} hold. 

Let us now carry out the general step of the induction. Suppose that $p_0 \geq p_1 \geq ... \geq p_i$ and $B_0, B_1, ... , B_i$ have been constructed with the required properties. For each $\sigma \in B_i$ and each $s_\sigma \in \rakovetkezo_{p_i}(\sigma)$ there exists $p_{s_{\sigma}} \leq p_i[s_\sigma]$ deciding $[t_{i+1}] \s \dot{U}_{n_{i+1}}$. Let
$$
p_{i+1}:= \bigcup\limits_{\substack{\sigma \in B_i \\ s_\sigma \in \rakovetkezo_{p_i}(\sigma)}} p_{s_{\sigma}},
$$
and
$$
B_{i+1}:= \left\{ \stem(p_{s_{\sigma}}) : \sigma \in B_i, s_\sigma \in \rakovetkezo_{p_i}(\sigma) \right\}.
$$

It is easy to check that \eqref{Bi része pj}-\eqref{Bi is an antichain} are satisfied. In order to verify \eqref{deciding}, note that if $\varrho \in B_{i+1}$ then $\varrho = \stem(p_{s_\sigma})$ for some $\sigma \in B_i$ and some successor $s_\sigma$, hence $p_{i+1}[\varrho] = p_{s_\sigma}$, therefore $p_{i+1}[\varrho]$ decides $[t_{i+1}] \s \dot{U}_{n_{i+1}}$.

Finally, we define $p$ as follows:
$$ 
p:=\{\tau \in \mathbb{N}^{<\omega} \vert  \exists i \in \omega \; \exists \varrho \in B_i: \tau \prec \varrho \}.
$$
By construction, $p$ is a Miller tree such that $p \leq p'$. One can also easily see that $p=\bigcap\limits_{i \in \omega} p_i$. Indeed, if $\tau \in p$, then for some $i \in \omega$ and $\varrho \in B_i$ we have $\tau \prec \varrho$, which implies that $\tau \in p_i$, but by \eqref{Bi-s are extended by some element from Bi+1} we know that $\varrho$ has an extension in $B_{i+1}, B_{i+2},...$, so we conclude that $\tau \in p_{i+1},p_{i+2},...$, so $p$ is contained in the intersection of $p_i$-s. To see the reverse containment, let us fix an arbitrary $\tau \in \bigcap\limits_{i \in \omega} p_i$ and say $length(\tau)=m$. If we take an arbitrary infinite branch $x$ of $p_{m+1}$ through $\tau$, then by \eqref{Bi is a barrier in pi} we know that there exists a $\varrho \in B_{m+1}$ such that $x\vert_{\lvert\varrho\rvert}=\varrho$. By \eqref{Elements from Bi+1 are extensions of elements} each element in $B_{m+1}$ has length at least $m+1$, therefore $\tau \prec \varrho$ and we conclude that $\tau \in p$.

We claim that $p$ is nice with respect to $\dot{\{U_n\}_{n \in \omega}}$. Indeed, if $(t,n) \in \omega^{<\omega} \times \omega$, then $(t,n) = (t_i,n_i)$ for some $i$. By \eqref{Bi is a barrier in pi} we know that $B_i$ is a barrier in $p_i$, moreover, $B_i \subseteq p$ by \eqref{Bi része pj}, therefore
$B_i$ is a barrier in $p$. Furthermore, by \eqref{deciding} we have that $\forall \varrho \in B_i$ the condition $p[\varrho] \textrm{ decides } [t_i] \s U_{n_i}$, hence the set  
$$B_{t_i,n_i}:=\{ \varrho \in p \mid p[\varrho] \textrm{ decides } [t_i] \s U_{n_i} \}
$$
contains $B_i$, which is a barrier in $p$, therefore $B_{t_i,n_i}$ is also a barrier in $p$.
\ep

We continue with a technical lemma which is essential in the proof of the analyticity of $Q_p$, as it will help us write $Q_p$ as a projection of a Borel set.

\bl
\label{F_n}
Let $p$ be nice with respect to $\dot{\{U_n\}_{n \in \omega}}$ and let $r \in \N^\omega$ be arbitrary. Then $r \in Q_p$ if and only if there exists a sequence of sets $F_n \s p$ such that the following hold for all $n \in \omega:$

\begin{equation}\label{kiindulás}
    \abs{F_0}=1,
\end{equation}

\begin{equation}\label{Fn elemei inkompatibilisek}
    \forall u,t \in F_n \ ( u \neq t \Rightarrow u \nprec t ),
\end{equation}

\begin{equation}\label{F_n elemeinek végtelen sok Fn+1 leszármazottja van}
    \forall u \in F_n \ \forall k \in \omega \ \exists l \geq k \ \exists t \in F_{n+1} \ u^\smallfrown l \prec t,
\end{equation}

\begin{equation}\label{Fn+1-ben közös elemeket kiterjesztők leghosszab közös kiterjesztése Fn beli}
    \forall u \in F_n  \ \forall t, t' \in F_{n+1} \ ( t,t' \succ u, t \neq t' \Rightarrow t \curlywedge t' = u ),
\end{equation}

\begin{equation}\label{Fn+1 elemei vmilyen Fn belit terjesztenek ki}
    \forall t \in F_{n+1} \  \exists u \in F_n \  t \succ u,
\end{equation}

\begin{equation}\label{még vmi label}
    \forall t \in F_n \  \exists k \in \omega  \ p[t] \Vdash [r\vert_k] \s \dot{U_n}.
\end{equation}

\el

\bp
First assume that there exists a sequence $\{F_n\}_{n \in \omega}$ of subsets of $p$ with the above properties. Our goal is to produce $q \leq p$ such that $q \Vdash r \in \bigcap\limits_{n \in \omega} \dot{U_n}$. Let 
$$
q:=\{ \sigma \mid \exists n \in \omega \ \exists t \in F_n \ \sigma \prec t \}.
$$

One can check from the construction that $q$ is a Miller tree,  $q \leq p$, and the branching nodes of $q$ are exactly the elements of $\bigcup\limits_{n \in \omega} F_n$.

To see that $q \Vdash r \in \bigcap\limits_{n \in \omega} \dot{U_n}$, suppose for a contradiction that there exists some $n \in \omega$ and $q' \leq q$ such that $q' \Vdash r \notin \dot{U_n}$. By the construction of $q$ it is clear that $F_n$ is a barrier in $q$, hence $F_n \cap q'$ is a barrier in $q'$. By assumption \eqref{még vmi label}, if we fix a $t \in F_n \cap q'$, then there exists $k \in \omega$ such that $p[t] \Vdash [r\vert_k] \s \dot{U_n}$. Then $q'[t] \leq p[t]$ and $q'[t] \leq q'$, hence $q'[t] \Vdash (r \in \dot{U_n}) \wedge (r \notin \dot{U_n})$, a contradiction.

For the other direction, suppose that we are given a $q \leq p$ such that $q \Vdash r \in \bigcap\limits_{n \in \omega} \dot{U_n}$. In order to build the appropriate $F_n$-s, we start with a claim:

\begin{claim}\label{vmilabel}
Let $p$ be nice with respect to $\dot{\{U_n\}_{n \in \omega}}$ and fix $n \in \omega$. Suppose that we are given a $p' \leq p$ such that $p'\Vdash r \in \dot{U_n}$, then there exists a branching node $t \in p'$  and $k \in \omega$ such that 
$$
p[t] \Vdash [r\vert_k] \s \dot{U_n}.
$$
\end{claim}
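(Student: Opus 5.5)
We argue through the niceness of $p$ combined with the fact that $\dot{U_n}$ is forced to be open. Since $p'\Vdash r\in\dot{U_n}$ and $\dot{U_n}$ is a name for an open set, there are $p''\le p'$ and $k\in\omega$ with $p''\Vdash [r\vert_k]\s\dot{U_n}$. Indeed, $p'$ forces that some basic neighbourhood $[r\vert_k]$ of $r$ lies in $\dot{U_n}$, and some extension of $p'$ decides a witness $k$. We fix such $p''$ and $k$ and work with the single pair $(r\vert_k,n)$ from now on.

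Next we use niceness. By the definition of being nice, the set $B_{r\vert_k,n}=\{\varrho\in p \mid p[\varrho]\text{ decides }[r\vert_k]\s\dot{U_n}\}$ is a barrier in $p$. Since $p''\le p$, every infinite branch of $p''$ is a branch of $p$, so it meets $B_{r\vert_k,n}$. We pick any branch $x$ of $p''$ and let $\varrho\in B_{r\vert_k,n}$ be an initial segment of $x$; then $\varrho\in p''$. The condition $p''[\varrho]$ is a Miller tree extending both $p''$ and $p[\varrho]$, so it is compatible with $p[\varrho]$. It forces $[r\vert_k]\s\dot{U_n}$, so $p[\varrho]$ cannot force the negation. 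Since $p[\varrho]$ decides the statement, we get $p[\varrho]\Vdash [r\vert_k]\s\dot{U_n}$.

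It remains to replace $\varrho$ by a branching node of $p'$. Because $p''\le p'$ and $\varrho\in p''$, the Miller tree $p''[\varrho]$ contains a branching node $t\succ\varrho$ of $p''$. Any branching node of $p''$ is a branching node of $p'$, since its infinitely many successors in $p''\s p'$ remain successors in $p'$. Finally $p[t]\le p[\varrho]$, which gives $p[t]\Vdash[r\vert_k]\s\dot{U_n}$, as required.

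The main point needing care is the deciding step: we must check that $p''[\varrho]$ is genuinely a common extension of $p''$ and $p[\varrho]$, i.e. a Miller tree. This holds because $\varrho\in p''$ and $p''$ is a Miller tree. A second point is that we should take $t$ extending $\varrho$, not simply $t=\varrho$, since $\varrho$ itself need not be a branching node of $p'$.
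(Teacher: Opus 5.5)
Your proof is correct and follows essentially the same route as the paper: the truth lemma gives $p''\le p'$ and $k$, niceness gives a node $u=\varrho\in p''\cap B_{r\vert_k,n}$, and since $p''[\varrho]\le p[\varrho]$ forces the positive statement, $p[\varrho]$ must decide it positively. The only difference is cosmetic: the paper takes $t=\stem(p'[u])$, while you take any branching node of $p''$ above $\varrho$; both are branching nodes of $p'$ extending $\varrho$, so either choice works.
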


\bp By the truth lemma, there exists $p'' \leq p'$ and $k \in \omega$ such that $p'' \Vdash [r\vert_k] \s \dot{U_n}$. Since $p$ is nice with respect to $\dot{\{U_n\}_{n \in \omega}}$, $B_{r\vert_k,n}$ is a barrier. Therefore, if we choose $u \in p'' \cap B_{r\vert_k,n}$ then either $p[u] \Vdash [r\vert_k] \s \dot{U_n}$ or $p[u] \Vdash [r\vert_k] \nsubseteq \dot{U_n}$. Since $p''[u] \leq p[u]$ and $p''[u] \Vdash [r\vert_k] \s \dot{U_n}$, we conclude that $p[u] \Vdash [r\vert_k] \s \dot{U_n}$. Since $u$ is not necessarily a branching node, set $t:=\stem(p'[u])$, which clearly works.
\ep

Using Claim \ref{vmilabel} for $p':=q$ and for $n:=0$ we get a $t$ and a $k$ as above and we set $F_0:=\{t\}$. Then \eqref{kiindulás}, \eqref{Fn elemei inkompatibilisek} and \eqref{még vmi label} clearly hold for $n=0$ (the rest of the conditions are vacuous). 

To continue the construction of the sequence $F_1,F_2,...$ we need the following:

\begin{claim}\label{vmilabel2}
Let $p$ be nice with respect to $\dot{\{U_n\}_{n \in \omega}}$ and $q \leq p$. For every $n \in \omega$, branching node $\varrho  \in q$ and $\sigma \in \rakovetkezo_q(\varrho)$ there exists a branching node $t_\sigma$ of $q$ with $\sigma \prec t_\sigma$ and $k_\sigma \in \omega$ such that $p[t_\sigma] \Vdash [r\vert_{k_\sigma}] \s \dot{U_n}$.
\end{claim}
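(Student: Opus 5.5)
The plan is to apply Claim \ref{vmilabel} to the condition $p' := q[\sigma]$ and then move the resulting node above $\sigma$. Here we are working in the context where $q \leq p$ satisfies $q \Vdash r \in \bigcap_{n} \dot{U_n}$, so the claim will be applied with this $q$. Fix $n$, a branching node $\varrho \in q$ and $\sigma \in \rakovetkezo_q(\varrho)$.

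First, $q[\sigma]$ is a Miller tree: every node of $q$ has a branching extension, so $q[\sigma]$ contains branching nodes above $\sigma$. Moreover $q[\sigma] \leq q \leq p$. Since $q \Vdash r \in \dot{U_n}$, also $q[\sigma] \Vdash r \in \dot{U_n}$.

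Next I apply Claim \ref{vmilabel} with $p' := q[\sigma]$. This gives a branching node $t$ of $q[\sigma]$ and some $k \in \omega$ with $p[t] \Vdash [r\vert_k] \s \dot{U_n}$. I then need $t$ to extend $\sigma$ and to be branching in $q$.

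\begin{itemize}
\item Every node of $q[\sigma]$ is comparable with $\sigma$. The nodes strictly below $\sigma$ have exactly one successor in $q[\sigma]$, so they are not branching there. Hence any branching node of $q[\sigma]$ extends $\sigma$.
\item If $t \succ \sigma$, then $\rakovetkezo_{q[\sigma]}(t) = \rakovetkezo_q(t)$, so $t$ is branching in $q$ as well.
\end{itemize}

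The one delicate point is the end of the proof of Claim \ref{vmilabel}, where $t$ was defined as $\stem(p'[u])$ for some $u \in p''$ with $p'' \leq p'$. For $p' = q[\sigma]$ this stem still lies in $q[\sigma]$ and is branching there. If the claim is read only as giving some $u \in q[\sigma]$ with the forcing property, I instead take $t := \stem(q[\sigma][u])$, where $u$ is replaced by an extension of $\sigma$ if needed. This does no harm, because the forcing property passes upward: if $u \prec t$ then $p[t] \leq p[u]$, so $p[t] \Vdash [r\vert_k] \s \dot{U_n}$ still holds.

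Finally I set $t_\sigma := t$ and $k_\sigma := k$. This completes the proof; I expect no real obstacle beyond the bookkeeping of branching nodes described above.
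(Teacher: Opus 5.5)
Your proposal is correct and follows the paper's own proof: apply Claim~\ref{vmilabel} to $p' := q[\sigma]$, which is below $p$ and forces $r \in \dot{U_n}$ because of the ambient hypothesis $q \Vdash r \in \bigcap_{n} \dot{U_n}$. The paper leaves implicit both that hypothesis and the check that branching nodes of $q[\sigma]$ extend $\sigma$ and are branching in $q$; your bookkeeping fills in these details correctly.
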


\bp Fix $n \in \omega$, a branching node $\varrho \in q$ and $\sigma \in \rakovetkezo_q(\varrho)$. Apply Claim \ref{vmilabel} for $n$ and $p':=q[\sigma]$ to obtain a branching node $t_\sigma \in q[\sigma]$ and $k_\sigma \in \omega$ such that $p[t_\sigma] \Vdash [r\vert_{k_\sigma}] \s \dot{U_n}$.
\ep

Let us return to the construction of the sequence $\{F_n\}_{n \in \omega}$. Suppose that we have constructed $F_0,...,F_n$ such that \eqref{kiindulás}-\eqref{még vmi label} hold and suppose also that every element of $F_n$ is a branching node of $q$. Apply the previous claim for $n+1$, every $\varrho \in F_n$, and every $\sigma \in \rakovetkezo_q(\varrho)$. Thus we obtain a branching node $t_\sigma$ and $k_\sigma$ such that
$p[t_\sigma] \Vdash [r\vert{_{k_\sigma}}] \s \dot{U_{n+1}}$. Let
$$
F_{n+1}:= \left\{ t_\sigma : \varrho \in F_n, \ \sigma \in \rakovetkezo_q(\varrho) \right\}.
$$
Then it is clear that the sequence $\{F_n\}_{n \in \omega}$ satisfies \eqref{kiindulás}-\eqref{még vmi label}. This concludes the proof of Lemma \ref{F_n}.
\ep

As an immediate corollary to Lemma \ref{F_n} we obtain the following.

\begin{claim}\label{közel a vége}
Let $p$ be nice with respect to $\dot{\{U_n\}_{n \in \omega}}$, then $Q_p$ is the projection on the first coordinate of the following set
$$
B_p:=\{(r,\{F_n\}_{n \in \omega}) \mid (r,\{F_n\}_{n \in \omega}) \text{ satisfies conditions } \eqref{kiindulás}-\eqref{még vmi label}\}.
$$
\end{claim}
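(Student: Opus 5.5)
The claim follows essentially verbatim from Lemma \ref{F_n}: for a fixed $r$, membership $r\in Q_p$ is equivalent to the existence of a sequence $\{F_n\}_{n\in\omega}$ of subsets of $p$ satisfying \eqref{kiindulás}--\eqref{még vmi label}. That is precisely the statement that $r$ lies in the projection of $B_p$ to the first coordinate. So the plan is first to make the ambient space of $B_p$ precise, and then to read off the equivalence. I will also record the Borelness of $B_p$, since that is what the claim is used for (analyticity of $Q_p$, hence of $R_p$).

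\textbf{Ambient space.} Since $p\subseteq\N^{<\omega}$ is countable, a sequence $\{F_n\}_{n\in\omega}$ with $F_n\subseteq p$ is coded as an element of $(2^{p})^\omega$, equivalently of $2^{\omega\times\N^{<\omega}}$. This is a compact metrizable (Cantor) space, so $B_p$ is viewed as a subset of the Polish space $\N^\omega\times(2^{p})^\omega$. The projection equality $Q_p=\pi_1(B_p)$ is then immediate from Lemma \ref{F_n}, applied to each $r$. The "only if" direction of that lemma provides the witnessing sequence, and the "if" direction shows that any witness places $r$ in $Q_p$.

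\textbf{Borelness of $B_p$.} Conditions \eqref{kiindulás}--\eqref{Fn+1 elemei vmilyen Fn belit terjesztenek ki} mention only membership of finitely many fixed nodes in the $F_n$, combined with countable quantifiers over $n,k,l\in\omega$ and $u,t\in\N^{<\omega}$. Each instance is a clopen condition on the code, so these conditions define a Borel (indeed $G_\delta$- or $\Pi^0_3$-type) set. Condition \eqref{még vmi label} reads
\[
\forall n\ \forall t\ \bigl(t\notin F_n \ \vee\ \exists k\ (r|_k,t,n)\in D\bigr),
\qquad D:=\{(\varrho,t,n) : p[t]\Vdash[\varrho]\s\dot{U_n}\}.
\]
Here $D$ is a fixed countable set in the ground model, so the condition "$(r|_k,t,n)\in D$" is clopen in $r$, and the whole condition is Borel in $(r,\{F_n\})$.

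The only point requiring care is that the forcing relation does not depend on $r$ beyond finite initial segments. This is exactly why \eqref{még vmi label} was formulated using $[r|_k]$ rather than $r\in\dot{U_n}$, so no obstacle arises. Consequently $Q_p$ is the continuous projection of a Borel set, hence analytic, which completes Lemma \ref{l:analytic}.
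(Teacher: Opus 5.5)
Your proposal is correct and follows the paper's route: there the claim is stated as an immediate corollary of Lemma \ref{F_n}, read pointwise in $r$, with $\{F_n\}_{n\in\omega}$ coded as an element of $Y^\omega$ for $Y=2^p$. Your added Borelness argument matches the paper's subsequent lemma, which shows that $B_p$ is Borel using the open sets $\{r : p[t]\Vdash [r\vert_k]\subseteq \dot{U_n}\}$.
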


Since $F_n \s p$, we can view $F_n$ as an element of $Y:=2^p$ and we can view $\{F_n\}_{n \in \omega}$ as an element of $Y^\omega$. Clearly, $Y$ is Polish (in fact, compact metrizable), hence so is $Y^\omega$. 

\bl\label{nagyon közel}
Let $p$ be nice with respect to $\dot{\{U_n\}_{n \in \omega}}$, then $B_p \s \mathbb{N}^\omega \times Y^\omega$ is Borel.
\el

\bp
It is enough to check that each of the following sets is Borel:
$$
\{(r,\{F_n\}_{n \in \omega}) \mid (r,\{F_n\}_{n \in \omega}) \text{ satisfies condition } \eqref{kiindulás}\},
$$
$$
\vdots
$$
$$
\{(r,\{F_n\}_{n \in \omega}) \mid (r,\{F_n\}_{n \in \omega}) \text{ satisfies condition } \eqref{még vmi label}\}.
$$

We mostly leave this standard but very lengthy computation to
the reader, and only deal with the most interesting clause, that is, when condition $\eqref{még vmi label}$ holds.

Clearly,
$$\{(r,\{F_n\}_{n \in \omega}) \mid (r,\{F_n\}_{n \in \omega}) \text{ satisfies condition } \eqref{még vmi label}\}=$$
$$= \bigcap\limits_{n \in \omega} \bigcap\limits_{t \in p} \bigcup\limits_{k \in \omega} \{ (r,\{F_n\}_{n \in \omega}) \mid (t \notin F_n) \vee (r \in V_{n,t,k}) \},$$
where
$$
V_{n,t,k}:=\{r \in \mathbb{N}^\omega \mid p[t] \Vdash[r\vert_k] \s \dot{U_n}\}.
$$

So it is enough to check that each $V_{n,t,k}$ is Borel, but this set is clearly open, because if $r \in V_{n,t,k}$ then the neighbourhood $[r\vert_k] \s V_{n,t,k}$.
\ep

\begin{lemma}
Let $p$ be nice with respect to $\dot{\{U_n\}_{n \in \omega}}$, then the set $Q_p$ is analytic.
\end{lemma}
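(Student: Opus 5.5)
The plan is to combine Claim \ref{közel a vége} with Lemma \ref{nagyon közel}. By Claim \ref{közel a vége}, for a nice $p$ the set $Q_p$ is exactly the projection to the first coordinate of
\[
B_p \subseteq \mathbb{N}^\omega \times Y^\omega, \qquad Y = 2^p .
\]
By Lemma \ref{nagyon közel}, $B_p$ is a Borel subset of this product.

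Next I check that $\mathbb{N}^\omega \times Y^\omega$ is Polish and that the projection is continuous. Since $p \subseteq \mathbb{N}^{<\omega}$ is countable, $Y = 2^p$ is compact metrizable. Hence $Y^\omega$, and therefore $\mathbb{N}^\omega \times Y^\omega$, is Polish. The projection $\pi_1 \colon \mathbb{N}^\omega \times Y^\omega \to \mathbb{N}^\omega$ is continuous.

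Now $Q_p = \pi_1(B_p)$ is a continuous image of a Borel subset of a Polish space. As recalled in the preliminaries, such an image is analytic. Concretely, $B_p$ is a Borel subset of a Polish space, hence a continuous image of $\mathbb{N}^\omega$ (or empty), so $Q_p$ is a continuous image of a Polish space.

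The only point needing care is that the encoding $\{F_n\}_{n\in\omega} \mapsto$ element of $Y^\omega$ matches the hypotheses of Lemma \ref{F_n}. The sets $F_n$ range over arbitrary subsets of $p$, which is exactly $2^p$, so the projection of $B_p$ really equals $Q_p$ by Lemma \ref{F_n}. There is no real obstacle: the work was done in Lemmas \ref{F_n} and \ref{nagyon közel}.

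Finally I close Lemma \ref{l:analytic}. We have $R_p = Q_p \cap [s]$, and $[s]$ is clopen, so $R_p$ is analytic for every nice $p$. Nice conditions are dense below $p^\star$, which gives the lemma.
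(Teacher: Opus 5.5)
Your proposal is correct and is essentially the paper's proof, which simply combines the Borelness of $B_p$ with the fact that $Q_p$ is the first-coordinate projection of $B_p$. Your extra checks are routine details the paper leaves implicit or handles right afterwards: that $\mathbb{N}^\omega \times Y^\omega$ is Polish because $p$ is countable, and that $R_p = Q_p \cap [s]$ remains analytic.
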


\bp
By Lemma \ref{nagyon közel} the set $B_p$ is Borel and by Claim \ref{közel a vége} the set $Q_p$ is the projection of $B_p$.
\ep

This completes the proof of Lemma \ref{l:analytic}.
\ep

\subsection{The iteration}

Now we are ready to prove the main result of the section. Recall that the \emph{Miller model} is the model obtained by the countable support iteration of length $\omega_2$ of the Miller forcing $\mathbb{M}$ over a model of the Continuum Hypothesis.

\begin{theorem}
\label{t:nonmeager}
The following statement is consistent, since it holds in the Miller model:
If $X$ is a perfect Polish space, and $\mathcal{A}$ is a family of subsets of $X$ such that $|\mathcal{A}| \le \omega_1$ and $A$ is everywhere nonmeager in $X$ for every $A \in \mathcal{A}$, then there exists a nowhere dense Cantor set $C \subseteq X$ such that $A \cap C$ is everywhere nonmeager in $C$ for every $A \in \mathcal{A}$.
\end{theorem}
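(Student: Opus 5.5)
First I reduce to $X=\N^\omega$. By Lemma \ref{perfectcontbairelemma} every perfect Polish space $X$ contains a dense $G_\delta$ subspace $N$ homeomorphic to $\N^\omega$, and $X\setminus N$ is meager. If $A$ is everywhere nonmeager in $X$, then $A\cap N$ is everywhere nonmeager in $N$. A nowhere dense Cantor set $C\s N$ is compact, hence closed in $X$. It is also nowhere dense in $X$, since a closed nowhere dense subset of a dense subspace is nowhere dense in the whole space. Finally, if $A\cap N\cap C$ is everywhere nonmeager in $C$, then so is $A\cap C$. So it suffices to treat $X=\N^\omega$ and a family $\mathcal{A}=\{A_\alpha\mid\alpha<\omega_1\}$ of everywhere nonmeager subsets of $\N^\omega$ in the Miller model $V[G_{\omega_2}]$.

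The core is a reflection argument. Let $\mathbb{P}_{\omega_2}$ be the countable support iteration of $\mathbb{M}$. It is proper, and the ground model satisfies CH, so each real, and each $\omega_1$-sequence of reals, appears at some intermediate stage. The sets $A_\alpha$ themselves are not reals, so I use a Löwenheim--Skolem closing-off argument. Take nice names $\dot A_\alpha$ and note that ``$A_\alpha$ is everywhere nonmeager'' is a statement about all Borel codes of meager $F_\sigma$ sets. Using properness and the $\aleph_2$-c.c., I find a stage $\delta<\omega_2$ of cofinality $\omega_1$ with the following properties: $A_\alpha\cap V[G_\delta]\in V[G_\delta]$ for all $\alpha$, and for every meager $F_\sigma$ code $c\in V[G_\delta]$ and every basic open $[s]$, some element of $A_\alpha\cap V[G_\delta]\cap[s]$ avoids the set coded by $c$. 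This is a standard club-many-$\delta$ argument: the set of good $\delta$ is closed under $\omega_1$-limits, and every real of $V[G_\delta]$ appears at an earlier stage because $\operatorname{cf}\delta=\omega_1$. Consequently, in $V[G_\delta]$ each $B_\alpha:=A_\alpha\cap V[G_\delta]$ is everywhere nonmeager.

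Next I apply Theorem \ref{t:one-step} over the model $V[G_\delta]$ with the Miller real $m_\delta$ added at stage $\delta$. It gives that in $V[G_{\delta+1}]$ each $B_\alpha\cap C_{m_\delta}$ is everywhere nonmeager in the nowhere dense Cantor set $C_{m_\delta}$. The theorem handles one set at a time, but it holds simultaneously for all $\alpha$, since it applies to each ground-model set separately.

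Finally I need this everywhere nonmeagerness to survive the tail iteration $\mathbb{P}_{\omega_2}/G_{\delta+1}$. This is the main obstacle. The tool is preservation: the Miller iteration preserves ``ground model nonmeager sets remain nonmeager'', i.e.\ it preserves non-meagerness of ground-model sets (Cohen-preserving / preservation of $\sqsubseteq^{\text{Cohen}}$). Miller forcing preserves P-points and does not make ground-model reals meager, and this property is preserved under countable support iterations by the Shelah / Bartoszy\'nski--Judah preservation theorem. I apply this inside the perfect Polish space $C_{m_\delta}\cap[s]$, for each of the countably many $s$, using a homeomorphism with $2^\omega$ coded in $V[G_{\delta+1}]$. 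Since $B_\alpha\s A_\alpha$, the conclusion follows for $A_\alpha\cap C$ with $C:=C_{m_\delta}$. If the closing-off step turns out to be delicate, I would instead argue via the equivalent name-based statement ``$\Vdash$ every meager $F_\sigma$ code misses some element of $\dot A_\alpha\cap[s]$'', together with the preservation theorem.
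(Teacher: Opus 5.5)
Your proposal is correct and follows essentially the paper's route. You reduce to $\N^\omega$, reflect to a stage $\delta<\omega_2$ of cofinality $\omega_1$ where (a subset of) each $A_\alpha$ is everywhere nonmeager, and apply Theorem~\ref{t:one-step} there. You then transfer to the final model because the tail iteration preserves non-meager sets; the relevant property is preservation of $\sqsubseteq^{\mathrm{Cohen}}$, i.e.\ of all non-meager ground-model sets, not merely of the ground-model reals, and P-point preservation plays no role.

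The one divergence is that you take $A_\alpha\cap V[G_\delta]$. Showing this set lies in $V[G_\delta]$ needs an extra name-based argument: the membership of the $\aleph_1$ many reals of each $V[G_\eta]$ must be decided by a bounded stage, and this property is not automatically closed under $\omega_1$-limits as you suggest. The paper sidesteps this by using the sub-name $A^\xi_{\alpha^\star}$ of elements whose names have support below $\alpha^\star$, which lies in $V_{\alpha^\star}$ by construction.
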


\begin{proof}
Let $V_{\omega_2}$ be the Miller model, and let $V_\alpha$ denote the intermediate models of the iteration.

The following argument takes place in $V_{\omega_2}$: We claim that we may assume that $X = \N^\omega$. Indeed, 
by Lemma \ref{perfectcontbairelemma}, $X$ contains a dense $G_\delta$ subspace $X'$ homeomorphic to $\N^\omega$, and clearly $A \cap X'$ is everywhere nonmeager in $X'$ for every $A \in \mathcal{A}$, hence the resulting Cantor set $C \subseteq X'$ works.


In $V_{\omega_2}$, let  $\mathcal{A} = \{ A^\xi : \xi < \omega_1 \}$ be a family of subsets of $\N^\omega$ such that $A^\xi$ is everywhere nonmeager in $\N^\omega$ for every $\xi < \omega_1$. In fact, we identify $\mathcal{A}$ and $A^\xi$ with their $\M_{\omega_2}$-names. We may assume that for every $\xi$ the set $A^\xi$ is enumerated as $A^\xi = \{ x^\xi_\beta \}_\beta$, and every $x^\xi_\beta$ is a so called nice name, that is, it consists of tuples $((n, k), p)$ determining whether the relation $x^\xi_\beta(n) = k$ holds, such that for every fixed $n$ the corresponding $p \in \M_{\omega_2}$ determining the value $x^\xi_\beta(n)$ form an antichain.

For every $\xi$ and $\alpha < \omega_2$ let $A^\xi_\alpha$ be the name consisting of the $\{ x^\xi_\beta \}_\beta$ for which the support of all the corresponding $p$ are below $\alpha$. Then clearly $A^\xi_\alpha$ is an $\M_\alpha$-name, and hence the corresponding set $A^\xi_\alpha \in V_\alpha$.

\begin{lemma}
There exists $\alpha^\star < \omega_2$ such that for every $\xi < \omega_1$ we have $V_{\alpha^\star} \vDash A^\xi_{\alpha^\star}$ is everywhere nonmeager in $\N^\omega$.
\end{lemma}

\begin{proof}
Using that $CH$ holds in the intermediate models, hence there are only $\omega_1$ many Borel sets coded in each intermediate model, and also using that by the $\omega_2$-chain condition every $x^\xi_\beta$ shows up in an intermediate model, we can define a strictly increasing continuous sequence $\langle \alpha_\gamma : \gamma < \omega_1 \rangle$ of ordinals less than $\omega_2$ such that for every $\gamma < \omega_1, \xi < \omega_1$ and $s \in \N^{< \omega}$
\[
V_{\omega_2} \vDash A^\xi_{\alpha_{\gamma + 1}} \cap [s] \textrm{ is not covered by any meager Borel set coded in } V_{\alpha_\gamma}.
\]

But then setting $\alpha^\star = \sup_\gamma \alpha_\gamma$ works, since by properness every meager Borel set coded in $V_{\alpha^\star}$ already shows up in a $V_{\alpha_\gamma}$.
\end{proof}

Now we complete the proof of the theorem. From the above lemma and Theorem \ref{t:one-step} we obtain that
\[
V_{\alpha^\star + 1} \vDash A^\xi_{\alpha^\star} \cap C_{m_{\alpha^\star + 1}} \textrm{ is everywhere nonmeager in } C_{m_{\alpha^\star + 1}} \textrm{ for every } \xi.
\]
Since Miller forcing is category-preserving (see \cite[Theorem 7.3.46, Lemma 6.3.19 and Theorem 6.3.20]{Bart}), this implies that
\[
V_{\omega_2} \vDash A^\xi_{\alpha^\star} \cap C_{m_{\alpha^\star + 1}} \textrm{ is everywhere nonmeager in } C_{m_{\alpha^\star + 1}} \textrm{ for every } \xi.
\]
But then $A^\xi_{\alpha^\star} \subseteq A^\xi$ implies
\[
V_{\omega_2} \vDash A^\xi \cap C_{m_{\alpha^\star + 1}} \textrm{ is everywhere nonmeager in } C_{m_{\alpha^\star + 1}} \textrm{ for every } \xi,
\]
concluding the proof.
\end{proof}

\section{Reflection of non-Baire property}

In this section we use the main result of the previous section to prove that consistently non-Baire property reflects in nowhere dense Cantor subspaces. This result will be the key ingredient in the next section, where the consistent equivalence of two definitions of universal Baireness will be proved.

\bt
\label{thm:non-Baire}
The following statement is consistent with $ZFC$, since it holds in the Miller model: If $X$ is a Polish space and a subset $A \subseteq X$ does not have the property of Baire, then there exists a nowhere dense Cantor set $C \subseteq X$ such that $A \cap C$ does not have the property of Baire in $C$.
\et

\bp
The following argument takes place in the final model: We claim that we may assume that $X = \N^\omega$. Let $X$ be an arbitrary Polish space, and let a set $A \subseteq X$ without the property of Baire be given.
It is well-known that we can write $X$ as a disjoint union $X = X_\textsl{perf} \cup X_\textsl{ctbl}$,  where $X_\textsl{perf}$ is perfect (closed without isolated points) and $X_\textsl{ctbl}$ is countable. We claim that $A \cap \inter X_\textsl{perf}$ does not have the property of Baire (in particular, $\inter X_\textsl{perf}$ is nonempty). Assume otherwise. Since $X_\textsl{perf} \setminus \inter X_\textsl{perf}$ is nowhere dense and $X_\textsl{ctbl}$ is countable, every subset of the latter two sets has the property of Baire, therefore we obtain that $A$ also has the property of Baire, a contradiction. Therefore $A \cap \inter X_\textsl{perf}$ also does not have the property of Baire in $\inter X_\textsl{perf}$ by Lemma \ref{bairetultranz}. Clearly, $\inter X_\textsl{perf}$ itself is a perfect Polish space, hence by Lemma \ref{perfectcontbairelemma} it contains a dense $G_\delta$ subspace $X'$ homeomorphic to $\N^\omega$, and similarly as above, $A \cap X'$ does not have the property of Baire in $X'$. Then, using the case $X = \N^\omega$ we obtain a nowhere dense Cantor set $C \subseteq X'$ such that $A \cap C$ does not have the property of Baire in $C$, and we are done.

Let now a set $A \subseteq \N^\omega$ without the property of Baire be given. Define
\[
\mathcal{U} = \{ [s] : s \in \N^{<\omega} \textrm{ and } \left( [s] \cap A \textrm{ is meager or } [s] \setminus A \textrm{ is meager} \right)\}.
\]
It is well-known and easy to see that $A$ has the property of Baire if and only if $\bigcup \mathcal{U}$ is dense, therefore there exists an $s_0 \in \N^{<\omega}$ such that $[s_0] \cap \bigcup \mathcal{U} = \emptyset$. But then $[s_0] \cap A$ and $[s_0] \setminus A$ are both everywhere nonmeager in the perfect Polish space $[s_0]$. Applying Theorem \ref{t:nonmeager} for this Polish space with $\mathcal{A} = \{ [s_0] \cap A, [s_0] \setminus A \}$ we obtain a nowhere dense Cantor set $C \subseteq [s_0]$ such that $C \cap A$ and $C \setminus A$ are both everywhere nonmeager in $C$, which easily implies that $A \cap C$ does not have the property of Baire in $C$. 
\ep

We also remark that certain analogous questions  have been considered by various authors. A set (in the Baire space) is called compactly $\mathbf{\Sigma^1_n}$ if its intersection with every compact set is $\mathbf{\Sigma^1_n}$ (for a definition of $\mathbf{\Sigma^1_n}$ sets, see \cite[Section 37]{Ke}). In \cite{KM} and \cite{Be} K. Kunen-A. W. Miller and H. Becker investigated the consistency of the statements that a set is compactly $\mathbf{\Sigma^1_n}$ if and only if it is $\mathbf{\Sigma^1_n}$.

Using a similar terminology the result of this section could essentially be stated as: consistently, if a set is compactly BP then it is BP.

\customsection{Polish and compact metrizable domains are consistently equivalent}{Polish and compact metrizable domains are consistently equivalent}{Polish vs. compact metrizable domains}

In this section we use the main result of the previous section to show that two definitions of universal Baireness are consistently equivalent.

\begin{theorem}
\label{t:equiv}
It is consistent with $ZFC$, since it holds in the Miller model, that a subset of a Polish space is Polish-universally Baire if and only if it is compact metrizable-universally Baire.
\end{theorem}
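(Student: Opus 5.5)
The forward implication (Polish-universally Baire implies compact metrizable-universally Baire) is trivial in ZFC, so the work is the converse in the Miller model. The plan is to argue by contraposition and reduce the general case to a statement about Cantor sets, where Theorem \ref{thm:non-Baire} applies. Let $A \s X$ be compact metrizable-universally Baire and suppose, towards a contradiction, that $A$ is not Polish-universally Baire. By Claim \ref{thm: polish=omega} there is a continuous $f:\bs \to X$ such that $B:=f^{-1}(A)$ does not have the property of Baire in $\bs$.

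Next we would apply Theorem \ref{thm:non-Baire}, which holds in the Miller model, to the Polish space $\bs$ and the set $B$. This yields a nowhere dense Cantor set $C \s \bs$ such that $B \cap C$ does not have the property of Baire in $C$. Now $C$ is compact metrizable (indeed homeomorphic to $2^\omega$), and $g:=f\vert_C : C \to X$ is continuous. Since $A$ is compact metrizable-universally Baire, $g^{-1}(A)=f^{-1}(A)\cap C = B \cap C$ must have the property of Baire in $C$. This contradicts the choice of $C$, so $A$ is Polish-universally Baire.

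There is no genuinely hard step remaining: all the difficulty sits in Theorem \ref{thm:non-Baire}, which is taken as given. The only points to check are the following.
\begin{itemize}
\item The reflection theorem is applied inside the final model to an arbitrary Polish space, namely $\bs$. This is exactly how Theorem \ref{thm:non-Baire} is stated, so no absoluteness issue arises.
\item Restricting $f$ to the compact set $C$ gives a legitimate domain. Since $C$ is compact metrizable, no appeal to $\sigma$-compactness of $X$ (as in Claim \ref{claim:szigmakompaktrajobboldal ekvi}) is needed.
\end{itemize}
It is worth remarking afterwards that only the $\{2^\omega\}$-version is used, so in the Miller model all four notions in the left and bottom frames of Theorem \ref{thm:diagram1} collapse.
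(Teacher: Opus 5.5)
Your proof is correct and is essentially the paper's argument: assume a continuous preimage $f^{-1}(A)$ fails the property of Baire, use Theorem~\ref{thm:non-Baire} to reflect this to a nowhere dense Cantor set $C$, and get a contradiction from compact metrizable-universal Baireness applied to $f|_C$. The only difference is your preliminary reduction to the domain $\mathbb{N}^\omega$ via Claim~\ref{thm: polish=omega}; it is harmless but unnecessary, since the paper applies Theorem~\ref{thm:non-Baire} directly to an arbitrary Polish domain $Y$.
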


\bp
Every Polish-universally Baire set is clearly compact metrizable-univer\-sally Baire, so we only need to prove the other implication. Let $X$ be a Polish space and let $A \subseteq X$ be compact metrizable-universally Baire.
Let $Y$ be another Polish space, and let $f : Y \to X$ be continuous. We need to check that $f^{-1}(A)  \subseteq Y$ has the property of Baire. Assume otherwise. Applying Theorem \ref{thm:non-Baire} in $Y$ we obtain a nowhere dense Cantor set $C \subseteq Y$ such that $f^{-1}(A) \cap C$ does not have the property of Baire in $C$. But then for the continuous map $f|_C : C \to X$ defined on the compact metrizable space $C$ we have that $\left( f|_C \right)^{-1} (A) = f^{-1}(A) \cap C$ does not have the property of Baire in $C$, a contradiction.
\ep

\section{Relation to sets with the property of Baire in the restricted sense}
\label{s:restricted}

Recall that a set $A \s X$ has the \underline{property of Baire in the restricted sense} if $A \cap P$ has the property of Baire in $P$ for every perfect $P \s X$. Since every perfect $P \s X$ is Polish, we have the following fact:

\begin{fact}\label{Polish imply restricted}
    Every Polish-universally Baire $A \s X$ has the property of Baire in the restricted sense.
\end{fact}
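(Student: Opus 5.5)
The plan is to apply the definition of Polish-universal Baireness directly to a single map: the inclusion of a perfect set. Fix a Polish-universally Baire set $A \s X$ and an arbitrary perfect set $P \s X$. We must show that $A \cap P$ has the property of Baire in $P$.

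First, $P$ is closed in the Polish space $X$. A closed subspace of a Polish space is $G_\delta$, so $P$ is itself Polish, as recalled in the preliminaries. If $P=\emptyset$, the claim is trivial, so we may assume $P$ is nonempty.

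Next, consider the inclusion map $\iota : P \longrightarrow X$. It is continuous, and its domain $P$ belongs to the class of Polish spaces. By the definition of Polish-universally Baire, the set $\iota^{-1}(A) = A \cap P$ therefore has the property of Baire in $P$. Since $P$ was an arbitrary perfect subset, $A$ has the property of Baire in the restricted sense.

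There is no real obstacle here. The only point to check is that perfect subsets of a Polish space are Polish, which holds because closed subsets are $G_\delta$. The perfectness of $P$ is in fact not used at all: the same argument shows that $A \cap F$ has the property of Baire in $F$ for every closed, or indeed every $G_\delta$, set $F \s X$.
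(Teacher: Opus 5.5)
Your proof is correct and is essentially the paper's argument. The paper also just observes that every perfect $P \s X$ is Polish, so applying the definition to the inclusion of $P$ into $X$ shows that $A \cap P$ has the property of Baire in $P$.
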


The main goal of this section is to show that it is independent from $ZFC$ that Polish-universally Baire sets coincide with sets with the property of Baire in the restricted sense.

Recall that a set $A \s X$ is called \underline{perfectly meager} if $A \cap P$ is meager in $P$ for every perfect $P \s X$. The following theorem shows that consistently the property of Baire does not imply Polish-universal Baireness.

\begin{theorem}\label{pm nem um}
    Under CH there exists a set $A \s [0,1]$ such that $A$ has the property of Baire in the restricted sense, but $A$ is not Polish-universally Baire (and by Theorem \ref{thm:diagram2}, the set $A$ is neither compact metrizable-universally Baire).
\end{theorem}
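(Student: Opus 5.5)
We plan to take $A$ to be a perfectly meager set that is not universally meager in a strong sense, namely one that is not Polish-universally Baire. Every perfectly meager set has the property of Baire in the restricted sense: for perfect $P$, $A\cap P$ is meager in $P$, hence has the property of Baire in $P$. So it suffices to build, under CH, a perfectly meager $A\subseteq[0,1]$ and a Borel isomorphism (equivalently, by Theorem \ref{thm: boole equivalence}, a finer Polish topology, or a Borel map from a Polish space) under which $A$ pulls back to a set without the property of Baire.

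\textbf{Construction.} Fix a Borel isomorphism $h$ between $[0,1]$ and $2^\omega$, or a suitable Polish space $Y$, with the following property: $h$ sends some uncountable family of nowhere dense sets onto a family that is not perfectly meager. The cleanest version is this. Choose a Polish space $Y$ and a Borel bijection $f:Y\to[0,1]$ such that $f$ maps every meager set of $Y$ into a set that is ``small'' for perfect subsets of $[0,1]$. Concretely, take $Y=[0,1]$ with a finer Polish topology $\tau'$ in which the Cantor set $K$ (or a fixed perfect nowhere dense $K$) becomes clopen and $K$ with some Polish topology carries the meager ideal. Then use CH to build $A$ by transfinite recursion of length $\omega_1$. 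Enumerate all perfect sets $P_\alpha\subseteq[0,1]$ (in the original topology) and all $\tau'$-meager Borel sets $M_\alpha$. At stage $\alpha$, pick points $a_\alpha\in K\setminus(\bigcup_{\beta\le\alpha}M_\beta\cup\bigcup_{\beta\le\alpha}N_\beta)$ and $b_\alpha\notin A$ similarly, where $N_\beta$ is a meager-in-$P_\beta$ $F_\sigma$ set chosen so that the part of $A$ added after stage $\beta$ avoids everything except a meager part of $P_\beta$. The resulting $A$ is a Luzin-type set for the $\tau'$-meager ideal on $K$, so $A$ is $\tau'$-nonmeager in $K$ while its complement in $K$ is also nonmeager. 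Hence $A\notin\mathcal{BP}([0,1],\tau')$, so $A$ is not Polish-universally Baire by \ref{Polish topology refinement}.

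\textbf{Main obstacle.} The difficulty is ensuring simultaneously that $A\cap P_\beta$ is meager in $P_\beta$ for every perfect $P_\beta$, while $A$ meets every $\tau'$-nonmeager set. This requires that the points $a_\alpha$ can avoid a $P_\beta$-meager set inside $K$ and still avoid the $\tau'$-meager sets. So we need, for countably many perfect $P_\beta$, a set $N\subseteq K$ with $N\cap P_\beta$ meager in $P_\beta$ whose complement in $K$ is $\tau'$-meager. Equivalently, we need an ideal orthogonality: $K$ with $\tau'$ must be chosen so that the $\tau'$-meager ideal on $K$ is orthogonal to the $\sigma$-ideal generated by relative meager subsets of all perfect sets.

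Our first attempt is to take $\tau'$ on $K$ transporting the meager ideal of $2^\omega$ onto the null ideal of a continuous measure on $K$, using the Erdős–Sierpiński-type Borel isomorphism that swaps meager and null. Then the $\tau'$-meager sets are the $\mu$-null sets, and $A$ becomes a Sierpiński-like set for $\mu$. Perfect meagerness of a $\mu$-Sierpiński set under CH is handled by choosing, for each $P_\beta$, a $P_\beta$-meager $G_\delta$-complement of full $\mu$-measure inside $P_\beta$. This works when $\mu(P_\beta)>0$, and is trivial otherwise, since $P_\beta$ is then $\mu$-null and so is eventually avoided. Verifying this countable-closure step at each stage is where we expect the real work to lie.
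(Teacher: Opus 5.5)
\textbf{The gap.} Your reduction is sound as far as it goes. A perfectly meager $A$ has the property of Baire in the restricted sense, and a finer Polish topology $\tau'$ with $A\notin\mathcal{BP}(X,\tau')$ shows that $A$ is not Polish-universally Baire, by Theorem \ref{thm: boole equivalence}\ref{Polish topology refinement}. But the whole content of the theorem is the orthogonality you call the ``main obstacle'': for \emph{each} perfect $P$ you need a $\tau'$-comeager set meeting $P$ in a $\tau|_P$-meager set. (Orthogonality to the $\sigma$-ideal generated by all these relative ideals is strictly weaker and does not suffice.) The one concrete way you propose to obtain it cannot work, for two reasons.

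\begin{itemize}
\item No Borel isomorphism, and no Polish topology $\tau'$ with the same Borel sets, has a meager ideal equal to the null ideal of an atomless measure $\mu$. By Lemma \ref{category algebra as borel mod meager}, the Borel sets modulo the $\tau'$-meager ones form $\cat$. The Borel sets modulo the $\mu$-null ones form the measure algebra, which is not isomorphic to $\cat$. The Erd\H{o}s--Sierpi\'nski duality map is a non-Borel bijection obtained from CH.
\item The set your recursion would actually build, a $\mu$-Sierpi\'nski set, is Polish-universally Baire. The proof of Example \ref{ex:S} works verbatim for any atomless Borel probability measure, and such a set is even universally meager. So this route produces the opposite of what is needed.
\end{itemize}

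There is also a smaller slip: in your recursion the points $a_\alpha$ must lie \emph{inside} the $P_\beta$-meager sets $N_\beta$, not avoid them.

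\textbf{How it could be closed.} Your framework does work with a different $\tau'$. Note that $\tau'$ must be nowhere locally compact, because on a $\tau'$-compact set $\tau'$ agrees with $\tau$. A choice that works is the following.

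Take closed sets $C_s$ for $s\in\N^{<\omega}$ with $C_\emptyset=X$, such that:
\begin{itemize}
\item the sets $C_{s^\smallfrown k}\s C_s$ are pairwise disjoint Cantor sets, each nowhere dense in $C_s$;
\item their diameters tend to $0$;
\item $\bigcup_{k\ge m}C_{s^\smallfrown k}$ is dense in $C_s$ for every $m$.
\end{itemize}
Let $\tau'$ be generated by $\tau$ together with all $C_s$ made clopen; it is Polish by \cite[Section 13.A]{Ke}.

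Then $E=\bigcap_n\bigcup_{|s|=n}C_s$ is a $\tau'$-dense $G_\delta$, and $\tau'|_E$ is the Baire-space topology on branches. Fix a perfect $P$.
\begin{itemize}
\item The $\tau'|_E$-interior of $P\cap E$ is a countable union of sets $E\cap C_s$ with $C_s\s P$, since $E\cap C_s$ is $\tau$-dense in $C_s$.
\item Each such $E\cap C_s$ is $\tau|_P$-meager, because $E\cap C_s\s\bigcup_k C_{s^\smallfrown k}$ and each $C_{s^\smallfrown k}$ is nowhere dense in $P$.
\item The rest of $P$ is $\tau'$-meager.
\end{itemize}
This is exactly the orthogonality you need, and your CH recursion then yields a perfectly meager $\tau'$-Luzin set.

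The paper avoids constructing $\tau'$ at all. It cites \cite{Sierp} for a CH example $A'$ that is perfectly meager but not universally meager; every subset of $A'$ is again perfectly meager. If $A'$ is not Polish-universally Baire, it already works. Otherwise, non-universal-meagerness gives a Borel isomorphism $f$ with $f^{-1}(A')$ nonmeager and with the property of Baire. One then picks a non-Baire $B\s f^{-1}(A')$, and $A=f(B)\s A'$ fails \ref{Borel isomorphic preimage} of Theorem \ref{thm: boole equivalence}.
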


\begin{proof}
    In \cite{Sierp} it was proved that under CH there exists a set $A' \s [0,1]$ such that $A'$ is perfectly meager, but not universally meager. It is clear that $A'$ and each of its subsets have the property of Baire in the restricted sense. 
    
    If $A'$ is not Polish-universally Baire, then set $A:=A'$ and we are done. If $A'$ is Polish-universally Baire, then using that $A'$ is not universally meager we obtain a Borel isomorphism $f:[0,1] \longrightarrow [0,1]$ such that $f^{-1}(A')$ has the property of Baire, but not meager. Since any nonmeager subset of $[0,1]$ that has the property of Baire contains a set without the property of Baire, select any $A \s A'$ such that $f^{-1}(A)$ does not have the property of Baire. By Theorem \ref{thm: boole equivalence} \ref{Borel isomorphic preimage} the set $A$ is not Polish-universally Baire.
\end{proof}

Now we will show that consistently the property of Baire in the restricted sense implies Polish-universal Baireness.

\begin{theorem}\label{restricted Baire implies Polish}
It is consistent with $ZFC$, since it holds in the Miller model, that $A \s X$ is Polish-universally Baire if and only if $A$ is compact metrizable-universally Baire if and only if $A$ has the property of Baire in the restricted sense.
\end{theorem}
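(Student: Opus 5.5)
The equivalence of Polish-universally Baire and compact metrizable-universally Baire in the Miller model is Theorem \ref{t:equiv}. It therefore remains to insert the property of Baire in the restricted sense into this chain, working in the Miller model throughout.

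The implication from Polish-universally Baire to the restricted sense holds in $ZFC$ and is Fact \ref{Polish imply restricted}. So the only new implication is that, in the Miller model, the property of Baire in the restricted sense implies Polish-universal Baireness. Given Theorem \ref{t:equiv}, it suffices to show that such a set is compact metrizable-universally Baire. We will in fact prove Polish-universal Baireness directly, imitating the proof of Theorem \ref{t:equiv}.

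Let $A \subseteq X$ have the property of Baire in the restricted sense, and let $f : Y \to X$ be continuous with $Y$ Polish. Suppose towards a contradiction that $f^{-1}(A)$ does not have the property of Baire in $Y$. By Theorem \ref{thm:non-Baire} there is a nowhere dense Cantor set $C \subseteq Y$ such that $f^{-1}(A) \cap C$ fails the property of Baire in $C$. Now we split into two cases according to the behaviour of $f|_C$.

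\emph{Case 1: $f|_C$ is injective.} Then $f|_C$ is a homeomorphism onto the compact set $P := f(C)$, which is a Cantor set and hence perfect in $X$. The restricted property gives that $A \cap P$ has the property of Baire in $P$. Pulling back along the homeomorphism, $f^{-1}(A) \cap C$ has the property of Baire in $C$, a contradiction.

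\emph{Case 2: $f|_C$ is not injective.} This is the main obstacle, since $f|_C$ need not be injective. A perfect image $f(C)$ is not enough here, because preimages under non-injective maps may destroy the property of Baire. My first attempt would be to reduce to the injective case. One option is to refine the choice of $C$: apply Theorem \ref{thm:non-Baire} (or Theorem \ref{t:nonmeager}) inside a suitable perfect subspace on which $f$ is injective, or on which $f$ is constant. Constant pieces are harmless, since a fiber's intersection with $f^{-1}(A)$ is either all of the fiber or empty.

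Concretely, I would use the structure of the Miller reflection more carefully. Using the fact from the proof of Theorem \ref{thm: boole equivalence}, part \ref{B(X) modulo I = C} $\Rightarrow$ \ref{M2-UB}, we can discard the open set where $f$ is locally countable-valued. We are then left with $f$ nowhere locally constant, and the sets $f^{-1}(A) \cap U$ and $f^{-1}(A)^c \cap U$ everywhere nonmeager on some basic open $U$. One then aims to build the Cantor set $C$ so that $f|_C$ is injective, while both sets remain everywhere nonmeager in $C$. This amounts to rerunning the Miller-real construction of Theorem \ref{t:one-step} with the added requirement that distinct branches get distinct $f$-values, which is achievable by a fusion argument because $f$ is nowhere locally constant.

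If this refinement is too costly, the fallback is to prove the weaker target: show that the restricted property implies compact metrizable-universal Baireness via irreducible maps, as in Claim \ref{bottom box notions are the same}. Then invoke Theorem \ref{t:equiv}.
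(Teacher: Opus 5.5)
\textbf{Case 2 is a genuine gap, and it is the whole difficulty.} Your Case 1 is correct. Case 2 is not resolved, and neither proposed remedy works.

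The Cantor set supplied by Theorem \ref{thm:non-Baire} is, up to a fixed homeomorphism, the product set $C_m=\prod_i[m(i)+1]$ given by the generic real. Nothing in the forcing argument controls how an arbitrary continuous $f$ behaves on it. ``Rerunning the construction so that distinct branches get distinct $f$-values'' would require a new reflection theorem, which you have not proved. It is also not a routine change to the fusion in Lemma \ref{l:nonmeager}: that fusion builds conditions forcing $r\in\bigcap_n\dot U_n$, not the Cantor set itself.

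The fallback fails as well. The irreducible-map argument of Claim \ref{bottom box notions are the same} only reduces compact metrizable domains to the domain $2^\omega$. There you meet the same non-injective composite $g\circ f:2^\omega\to X$. Moreover, that argument is pure $ZFC$. By Theorem \ref{pm nem um}, under CH there is a set with the property of Baire in the restricted sense that is not compact metrizable-universally Baire. So no $ZFC$ argument of this kind can succeed.

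\textbf{The missing idea is to arrange that you are always in Case 1.} Do this by replacing arbitrary continuous preimages with a single injective map, using Theorem \ref{thm: boole equivalence} \ref{Polish topology refinement}. If $A$ is not Polish-universally Baire, there is a Polish topology $\tau'\supseteq\tau$ on $X$ with $A\notin\mathcal{BP}(X,\tau')$.

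Apply Theorem \ref{thm:non-Baire} in $(X,\tau')$. This gives a $\tau'$-Cantor set $C$ with $A\cap C\notin\mathcal{BP}(C,\tau'|_C)$.

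The identity $(C,\tau'|_C)\to(C,\tau|_C)$ is a continuous bijection from a compact space onto a Hausdorff space, hence a homeomorphism. So $C$ is a perfect subset of $(X,\tau)$, and $A\cap C\notin\mathcal{BP}(C,\tau|_C)$. This contradicts the restricted property. Here the relevant map is the identity, so non-injectivity never arises.
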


\begin{proof}
    By Theorem \ref{t:equiv} we have that in the Miller model $A$ is Polish-universally Baire if and only if $A$ is compact metrizable-universally Baire and Fact \ref{Polish imply restricted} shows that if $A$ is Polish-universally Baire then $A$ has the property of Baire in the restricted sense.

    Now, suppose that $A$ has the property of Baire in the restricted sense. Suppose for a contradiction that $A$ is not Polish-universally Baire. By Theorem \ref{thm: boole equivalence} \ref{Polish topology refinement} there exists a Polish topology $\tau'$ on $X$ that refines the original topology $\tau$ of $X$ and $A$ does not have the property of Baire in $(X,\tau') $. By Theorem \ref{thm:non-Baire} there exists a nowhere dense Cantor set $C$ in $(X,\tau')$ such that $A \cap C$ does not have the property of Baire in $(C, \tau'\vert_C)$. Finally, since $C$ is a Cantor set we have that $\tau'\vert_C = \tau\vert_C$ and this contradicts the fact that $A \cap C$ has the property of Baire in the perfect set $(C, \tau\vert_C)$.
\end{proof}

In Theorem \ref{pm nem um} we have seen a consistent example that shows that the property of Baire in the restricted sense does not imply compact metrizable-universal Baireness. Let us remark that a Luzin set (see Example \ref{ex:L}) is a consistent example of a set that is compact metrizable-universally Baire, but does not have the property of Baire and hence does not have the property of Baire in the restricted sense.

\section{Arguments for one of the definitions}
\label{s:argument}

What is the natural Baire category dual to the notion of a universally measurable set? (Recall that a set in a Polish space is called \emph{universally measurable} if it is measurable with respect to the completion of every Borel probability measure.) In this section we argue that the best fit is the class of Polish-universally Baire sets.

\medskip

\noindent\underline{First Argument:}

\medskip

Let us try to axiomatize what we expect from the Baire category dual to the notion of universally measurable set. The following list of requirements contains the duals of the most basic properties of universally measurable (and universally null) sets.

\begin{enumerate}[label=(\roman*)]
    \item\label{Borelekub-k} Every Borel set is universally Baire.
    \item\label{zártsouslinra} Universally Baire sets are closed under Souslin operation.
    \item\label{zártfolytősképekre} The universally Baire sets are closed under continuous preimages between Polish spaces.
    \item\label{univbaire-k BP-k} Every universally Baire set has the property of Baire.
    \item\label{univmeagerunivbaire} Every universally meager set is universally Baire.
\end{enumerate}

Well, properties \ref{Borelekub-k}-\ref{zártfolytősképekre} are satisfied by $\mathcal{F}$-universally Baire sets for an arbitrary $\mathcal{F}$.

Let us consider now property \ref{univbaire-k BP-k}. We saw in Example \ref{ex:L} that a Luzin set in $\N^\omega$ is a set without the property of Baire that is both compact-universally Baire and compact metrizable-universally Baire.  This already rules out compact-universal Baireness and compact metrizable-universal Baireness to be the natural dual.

Property \ref{univmeagerunivbaire} rules out topological-universally Baire sets, because in Example \ref{ex:twowoodincardinal} under certain large cardinal assumptions we give an example of a universally meager set that is not topological-universally Baire.

\begin{remark}
    Let us remark here that we use the definition of a universally meager set in the sense of Zakrzewski \cite{Zak1}. However, there is another very useful universally meager notion due to Todor\v{c}evi\'c. Let us remark here that consistently every universally meager set - in the sense of Todor\v{c}evi\'c - is countable, but in the measure case we know that there exists an uncountable universally null set.
\end{remark}

This leaves us with the only option of Polish-universal Baireness.

\medskip

We now would like to argue further that this is indeed the natural choice.

\medskip

\underline{Second argument:}

\medskip

Recall the following important characterization of universally measurable sets:

\bt\label{univmeas} (see \cite[434D]{Fremlin4}) Let $X$ be a Polish space with $A \s X$. The set $A$ is universally measurable if and only if $f^{-1}(A)$ is Lebesgue measurable for every Borel function $f:[0,1] \longrightarrow X$.
\et

The next result strongly supports our choice.

\begin{theorem}\label{truth}
Let $X$ be a Polish space with $A \s X$. The set $A$ is Polish-universally Baire if and only if $f^{-1}(A)$ has the property of Baire for every Borel function $f: [0,1] \longrightarrow X$.
\end{theorem}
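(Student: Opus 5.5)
The forward direction is immediate from results already established. By Theorem \ref{Borel preimages theorem} (more precisely Claim \ref{cor: a jó választás}), a Polish-universally Baire set is Polish-Borel-universally Baire. Since $[0,1]$ is Polish, $f^{-1}(A)$ then has the property of Baire for every Borel $f:[0,1]\to X$.

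For the converse, assume that $f^{-1}(A)\in\mathcal{BP}([0,1])$ for every Borel $f:[0,1]\to X$. I will verify condition \ref{Borel isomorphic preimage} of Theorem \ref{thm: boole equivalence}, or equivalently show that $A$ is $\{\bs\}$-universally Baire and then invoke Claim \ref{thm: polish=omega}. The $\{\bs\}$ route is more direct, so fix a continuous (or merely Borel) $g:\bs\to X$. By Lemma \ref{perfectcontbairelemma}, $[0,1]$ contains a dense $G_\delta$ subspace $N$ homeomorphic to $\bs$ (for instance the irrationals), with homeomorphism $\varphi:N\to\bs$. If $A=X$ there is nothing to prove, so fix $a\in X\setminus A$. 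Define $f:[0,1]\to X$ by $f=g\circ\varphi$ on $N$ and $f\equiv a$ on $[0,1]\setminus N$. This $f$ is Borel, because $N$ is Borel and $f$ is continuous on $N$ and constant on the complement. This is exactly the extension trick used in Claim \ref{bsub=cantorub}.

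By hypothesis $f^{-1}(A)$ has the property of Baire in $[0,1]$. Since $a\notin A$, we have $f^{-1}(A)=\varphi^{-1}(g^{-1}(A))\subseteq N$. As $N$ is a dense $G_\delta$, its complement is meager, and a set with the property of Baire in $[0,1]$ that is contained in $N$ has the property of Baire relative to $N$: write it as $U\triangle M$, intersect with $N$, and note that meager subsets of $[0,1]$ intersected with the comeager $N$ are meager in $N$. Transporting along the homeomorphism $\varphi$ shows that $g^{-1}(A)$ has the property of Baire in $\bs$. Hence $A$ is $\{\bs\}$-universally Baire, and Claim \ref{thm: polish=omega} gives that $A$ is Polish-universally Baire.

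The only delicate point is the last transfer step, namely that relative Baire property passes from $[0,1]$ to its dense $G_\delta$ subspace. This uses that nowhere dense subsets of $[0,1]$ meet $N$ in nowhere dense subsets of $N$, which holds because $N$ is dense. Everything else is routine bookkeeping with results already proved in the paper.
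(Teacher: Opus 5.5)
Your proof is correct and matches the paper's: the forward direction uses the equivalence of Polish-universally Baire and Polish-Borel-universally Baire. For the converse, you reduce to $\{\bs\}$-universal Baireness, extend $g$ from the irrationals (a dense $G_\delta$ copy of $\bs$) by a constant value outside $A$, and then intersect the decomposition $U \triangle M$ with that dense $G_\delta$. The only difference is that you spell out why meager subsets of $[0,1]$ meet $N$ in meager subsets of $N$, which the paper leaves as easy.
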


\bp
If $A$ is Polish-universally Baire then Claim \ref{cor: a jó választás} yields that $f^{-1}(A)$ has the property of Baire for every Borel function $f: [0,1] \longrightarrow X$.

Conversely, suppose that $f^{-1}(A)$ has the property of Baire for every Borel function $f: [0,1] \longrightarrow X$. By Theorem \ref{thm:diagram1} it is enough to prove that $A$ is $\{ \N^\omega \}$-universally Baire. Let $g: \bs \longrightarrow X$ be continuous, we need to show that $g^{-1}(A)$ has the property of Baire. It is well-known that $\bs$ is homeomorphic to $[0,1] \setminus \mathbb{Q}$ and throughout the proof we will identify $\bs$ with $[0,1] \setminus \mathbb{Q}$.

If $A = X$ then $A$ is clearly $\{ \N^\omega \}$-universally Baire, so we can assume that $A \neq X$ and fix $x_0 \in X \setminus A$. Define
\begin{equation*}
    h(a)=
    \begin{cases*}
    g(a) & if $a \in [0,1] \setminus \mathbb{Q}$, \\
    x_0 & if $a \in [0,1] \cap \mathbb{Q}$.
    \end{cases*}
\end{equation*}

It is clear that $h : [0,1] \to X$ is a Borel function, hence $h^{-1}(A)=g^{-1}(A)$ has the property of Baire in $[0,1]$. To see that $g^{-1}(A)$ has the property of Baire in $\bs$, we choose an open set $U$ in $[0,1]$ and a meager set $M$ in $[0,1]$ such that $U \triangle M= g^{-1}(A)$. It is easy to see that $g^{-1}(A)= (U \cap \bs) \triangle (M \cap \bs) $ and $(U \cap \bs)$ is open in $\bs$ and $ (M \cap \bs)$ is meager in $\bs$, hence $g^{-1}(A)$ has the property of Baire in $\bs$.
\ep

\underline{Third argument:}

\medskip

As we have seen in the first argument, compact-universally and compact metrizable-universally Baire are not the appropriate notions for universally Baire sets, since they consistently do not even imply the property of Baire. In the following we argue that the theory of topological-universally Baire sets is not sufficiently parallel to that of universally measurable sets.

Interestingly, it is well-known that topological-universal Baireness implies measurability. But universal measurability does not imply the property of Baire (in $ZFC$). Finally, we should also add the point that the rich theory of topological-universal Baireness simply has no counterpart for universal measurability.

\medskip

\medskip
To sum up, it would perhaps be more fortunate to use the term 'universally Baire' for Polish-universally Baire. The only source that we are aware of to do this so far is \cite{Matheron}, but perhaps this practice should be followed in descriptive set theory texts from now on.

\section{Concluding remarks}\label{s:concluding}


In the previous section we argued that the natural category dual notion to universal measurability is Polish-universally Baire. Let us give a further dual behaviour between Polish-universally Baire sets and universally measurable sets. Recall that if $\mathcal{A}$ is a $\sigma$-algebra on $X$, then we can define the $\sigma$-ideal of small sets of $\mathcal{A}$ as:
$$
\mathcal{I}_\mathcal{A}:= \{H \s X \mid H' \in \mathcal{A} \text{ for every } H' \s H\}.
$$

For example, if $X=\R$ and $\mathcal{A}=$ Lebesgue measurable sets/sets with the property of Baire/Borel sets, then $\mathcal{I}_\mathcal{A}=$ Lebesgue null sets/meager sets/countable sets. It is also well-known that if $\mathcal{A}=$ universally measurable subsets of $X$, then $\mathcal{I}_\mathcal{A}$= universally null sets of $X$. Let us show that the small sets of Polish-universally Baire sets are exactly the universally meager sets:

\begin{theorem}
    Let $\mathcal{A}$ denote the $\sigma$-algebra of Polish-universally Baire subsets of a perfect Polish space $X$, then $\mathcal{I}_\mathcal{A}=$ universally meager sets.
\end{theorem}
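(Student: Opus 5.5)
We prove the two inclusions separately. We work with the paper's definition of universally meager (via Borel isomorphisms $f:X\to X$), and use that for perfect Polish $X$ any two perfect Polish spaces are Borel isomorphic. Throughout, \ref{Borel isomorphic preimage} of Theorem \ref{thm: boole equivalence} serves as the working characterization of Polish-universal Baireness.

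\emph{Universally meager sets are small.} Let $M\s X$ be universally meager and $H'\s M$. Then $H'$ is again universally meager, because $f^{-1}(H')\s f^{-1}(M)$ is meager for every Borel isomorphism $f$. So it suffices to show that every universally meager set is Polish-universally Baire. To do this we check condition \ref{Borel isomorphic preimage}. Let $Y$ be Polish and $f:Y\to X$ a Borel isomorphism. Then $Y$ is uncountable, and after a Borel automorphism it is perfect; more directly, compose $f$ with a Borel isomorphism $\varphi:X\to Y$ that maps meager sets onto meager sets. Such a $\varphi$ exists by \cite[Theorem 15.10]{Ke}, applied as in the proof of \ref{Borel isomorphic preimage} $\Rightarrow$ \ref{B(X) modulo I = C}, provided $Y$ is perfect. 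If $Y$ is not perfect, discard the countable open part $Y_\textsl{ctbl}$ and replace $Y$ by its perfect kernel; this is handled by Lemma \ref{bairetultranz}. Now $f\circ\varphi$ is a Borel automorphism of $X$, so $(f\circ\varphi)^{-1}(M)$ is meager, and therefore $f^{-1}(M)=\varphi((f\circ\varphi)^{-1}(M))$ is meager. Meager sets have the property of Baire. (Alternatively, one can cite the remark before Example \ref{ex:uniformity number}: Zakrzewski's characterization \cite[Theorem 2.1 (iii)]{Zak1} gives condition \ref{B(X) modulo I = C} directly.)

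\emph{Small sets are universally meager.} Let $H\in\mathcal I_\mathcal A$, and suppose for contradiction that some Borel isomorphism $f:X\to X$ makes $f^{-1}(H)$ nonmeager. Consider two cases.

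\textbf{Case 1: $f^{-1}(H)$ lacks the property of Baire.} By \ref{Borel isomorphic preimage}, $H$ itself is not Polish-universally Baire, contradicting $H\in\mathcal A$.

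\textbf{Case 2: $f^{-1}(H)$ has the property of Baire.} Being nonmeager, $f^{-1}(H)$ is comeager in some nonempty open $U\s X$, and in particular $f^{-1}(H)\cap U$ is nonmeager with the property of Baire. The main step is to choose $H''\s f^{-1}(H)\cap U$ without the property of Baire. This is the Bernstein-type fact that every nonmeager set with the property of Baire in a perfect Polish space contains a subset without the property of Baire; the paper uses the same fact in the proof of Theorem \ref{pm nem um}. Then set $H':=f(H'')\s H$. We have $f^{-1}(H')=H''$, which lacks the property of Baire, so by \ref{Borel isomorphic preimage} $H'\notin\mathcal A$. This contradicts $H\in\mathcal I_\mathcal A$.

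The only delicate point is the first inclusion's reduction to a perfect domain together with the existence of the category-preserving Borel isomorphism $\varphi$. Everything else is bookkeeping with Theorem \ref{thm: boole equivalence}.
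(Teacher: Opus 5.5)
Your proof that $\mathcal I_\mathcal A$ is contained in the universally meager sets is exactly the paper's argument. The paper does not need your Case~1, since $H\in\mathcal A$ already gives $f^{-1}(H)\in\mathcal{BP}(X)$ by Theorem~\ref{thm: boole equivalence}\ref{Borel isomorphic preimage}. For the reverse inclusion, the paper simply invokes the fact that universally meager sets are Polish-universally Baire, established before Example~\ref{ex:uniformity number} via Zakrzewski's characterization and condition \ref{B(X) modulo I = C}. That is your ``alternative''.

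Your primary argument for this inclusion is a genuinely different and more self-contained route. It replaces Zakrzewski's $\sigma$-ideal characterization by a single application of \cite[Theorem 15.10]{Ke}, which the paper already uses, to obtain a category-preserving Borel isomorphism $\varphi$. For perfect $Y$ it is correct. Note that $\varphi$ maps \emph{all} meager sets to meager sets, since each is covered by a Borel meager set.

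The reduction to perfect $Y$, however, does not work as written. After replacing $Y$ by $Y_\textsl{perf}$, the map $f|_{Y_\textsl{perf}}$ is a Borel bijection onto $X\setminus f(Y_\textsl{ctbl})$, not onto $X$. So $f|_{Y_\textsl{perf}}\circ\varphi$ is not a Borel automorphism of $X$, and the definition of universal meagerness does not apply to it. Lemma~\ref{bairetultranz} only handles transferring the property of Baire from $Y_\textsl{perf}$ back to $Y$. Also, ``after a Borel automorphism it is perfect'' is not meaningful, since a Borel automorphism of $Y$ does not change its topology.

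The repair is easy. Let $h:=f|_{Y_\textsl{perf}}\circ\varphi$, fix a countably infinite $E\s X$, and redefine $h$ on $E$ to be a bijection onto $h(E)\cup f(Y_\textsl{ctbl})$. The resulting $g$ is a Borel automorphism of $X$, so $h^{-1}(M)\s g^{-1}(M)\cup E$ is meager. Hence $f^{-1}(M)\cap Y_\textsl{perf}=\varphi(h^{-1}(M))$ is meager in $Y_\textsl{perf}$, and therefore in $Y$. So $f^{-1}(M)$ is meager plus countable. In this case you only get the property of Baire, not meagerness, because isolated points of $Y$ are nonmeager.

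Alternatively, apply \cite[Theorem 15.10]{Ke} directly to $X$ and $Y$ with the ideal $\{B\in\mathcal B(Y)\mid B\cap Y_\textsl{perf}\in\mathcal M(Y_\textsl{perf})\}$. Here meagerness must be taken in $Y_\textsl{perf}$, because $Y_\textsl{perf}$ can be nowhere dense in $Y$.
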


\begin{proof}
    It is clear that every universally meager subset of $X$ is in $\mathcal{I}_\mathcal{A}$, since every universally meager set is Polish-universally Baire.

    Now assume $A \in \mathcal{I}_\mathcal{A}$, that is, $A$ and each of its subsets are Polish-universally Baire sets and the goal is to conclude that $A$ is universally meager. Suppose for a contradiction that $A$ is not universally meager. In this case, there exists a Borel isomorphism $f:X \longrightarrow X$ such that $f^{-1}(A)$ is not meager in $X$. Since $f^{-1}(A)$ has the property of Baire, but not meager, it has a subset $B \s f^{-1}(A)$ such that $B$ does not have the property of Baire. If $A':= f(B) \s A$, then $A'$ is not Polish-universally Baire, a contradiction.
\end{proof}

Throughout the paper we have seen many definitions of universally Baire sets. Another natural attempt could be the following: define a set $A$ to be universally Baire if $A=B \vartriangle C$, where $B$ is a Borel set and $C$ is a universally meager set. In \cite[Proposition (ii)]{Sol} Solecki shows that there exists an analytic set $A$ such that $A \vartriangle B$ contains a nonempty perfect set for every coanalytic set $B$. Since a universally meager set could not contain a nonempty perfect set, we conclude that the family of sets $\{B \vartriangle C \mid B \text{ is Borel and }C \text{ is universally meager}\}$ does not include every analytic set. This shows that this notion is not a satisfactory candidate.

Let us define the additivity of Polish-universally Baire sets as:
\begin{definition}
We define the additivity of Polish-universally Baire sets as   
    $$
    \hspace{-4pt} \add(\text{Polish-uB}):= \min\{\kappa \mid \exists \{B_\alpha\}_{\alpha < \kappa} \s \bs\text{ s.t. } \forall\alpha \; B_\alpha \text{ is Polish-universally }
    $$
    $$
     \text{Baire and } \bigcup\limits_{\alpha < \kappa} B_\alpha \text{ is not } \text{Polish-universally Baire}\}.
    $$
\end{definition}
Notice that we can replace $\bs$ with any other uncountable Polish space in the previous definition, since by Theorem \ref{thm: boole equivalence} \ref{Borel isomorphic preimage} Polish-universally Baire sets are closed under Borel isomorphisms.
\begin{theorem}
    $\add(\text{Polish-uB}) = \add(\mathcal{M})$.
\end{theorem}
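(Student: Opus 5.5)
We prove the two inequalities $\add(\mathcal{M}) \le \add(\text{Polish-uB})$ and $\add(\text{Polish-uB}) \le \add(\mathcal{M})$ separately, working in $\bs$.

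For the lower bound, let $\kappa < \add(\mathcal{M})$ and let $\{B_\alpha\}_{\alpha<\kappa}$ be Polish-universally Baire subsets of $\bs$. Fix a Polish space $Y$ and a continuous $f:Y\to\bs$. Each $f^{-1}(B_\alpha)$ has the property of Baire in $Y$. We use the fact, already recalled in the paper, that in a Polish space the union of fewer than $\add(\mathcal{M})$ sets with the property of Baire again has the property of Baire. (This was stated for perfect Polish spaces. A general Polish $Y$ reduces to that case by splitting off the countable open part $Y_\textsl{ctbl}$ and applying Lemma \ref{bairetultranz} on $Y_\textsl{perf}$.) Hence $f^{-1}(\bigcup_\alpha B_\alpha)=\bigcup_\alpha f^{-1}(B_\alpha)$ has the property of Baire, so the union is Polish-universally Baire. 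This gives $\add(\text{Polish-uB})\ge\add(\mathcal{M})$.

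For the upper bound, let $\kappa=\add(\mathcal{M})$ and choose meager sets $\{M_\alpha\}_{\alpha<\kappa}$ in $\bs$ whose union is nonmeager. We may take each $M_\alpha$ to be a meager $F_\sigma$, hence Borel. Imitating Example \ref{ex:3}, we disjointify: set $B_\alpha := M_\alpha\setminus\bigcup_{\beta<\alpha}M_\beta$. Each $B_\alpha$ is meager but need not be Borel, so Polish-universal Baireness of $B_\alpha$ is not automatic. The fix is to shrink the index set first. Let $\kappa'\le\kappa$ be least such that some $\kappa'$ of the $M_\alpha$ have nonmeager union, and reindex so that we work with exactly these $\kappa'$ sets. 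By minimality, every initial union $\bigcup_{\beta<\alpha}M_\beta$ is meager and has the property of Baire, so each $B_\alpha$ is meager and the $B_\alpha$ are pairwise disjoint.

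Now $N:=\bigcup_\alpha B_\alpha$ is nonmeager. By the Four Poles Theorem \cite{4P}, exactly as in Example \ref{ex:3}, there is $E\subseteq\kappa'$ such that $\bigcup_{\alpha\in E}B_\alpha$ does not have the property of Baire. That theorem applies to partitions of a set into pieces from a $\sigma$-ideal (here the meager sets) whose union is not in the ideal. This union is then not Polish-universally Baire, since the identity map on $\bs$ witnesses it.

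The main obstacle is ensuring that each $B_\alpha$ is itself Polish-universally Baire. Being meager is not enough, since meager does not imply universally meager. The plan is therefore to choose the $M_\alpha$ more carefully, so that each $M_\alpha$ is Borel and each $B_\alpha$ is a difference of a Borel set and a union of fewer than $\kappa$ Borel sets. For any continuous $f:Y\to\bs$ with $Y$ Polish, $f^{-1}(B_\alpha)$ is then a Borel set minus a union of fewer than $\add(\mathcal{M})$ Borel sets, which has the property of Baire by the lower-bound fact. Hence each $B_\alpha$ is Polish-universally Baire, while their union over $E$ is not, giving $\add(\text{Polish-uB})\le\kappa'\le\add(\mathcal{M})$.
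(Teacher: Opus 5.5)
Your proof is correct and follows the paper's argument. The lower bound uses the fact that a union of fewer than $\add(\mathcal{M})$ sets with the property of Baire again has the property of Baire. The upper bound disjointifies $\add(\mathcal{M})$ many Borel meager sets with nonmeager union, notes that each piece is a Borel set minus fewer than $\add(\mathcal{M})$ Borel sets (hence Polish-universally Baire), and applies the Four Poles Theorem.

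Your detour through a minimal $\kappa'$ is vacuous: by the definition of $\add(\mathcal{M})$ one has $\kappa'=\kappa$, and that step does not address Borelness anyway. Your direct choice of Borel $F_\sigma$ sets $M_\alpha$ also shows that the paper's recursive construction of increasing Borel meager covers is not needed.
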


\begin{proof}
    First, we prove that $\add(\text{Polish-uB}) \geq \add(\mathcal{M})$. Consider an arbitrary $\kappa < \add(\mathcal{M})$ and we show that the union of  $\kappa$ many Polish-universally Baire sets $\{B_\alpha\}_{\alpha < \kappa}$ in a Polish space $X$ is still Polish-universally Baire. Let $f$ be a continuous function $Y \longrightarrow X$ from an arbitrary Polish space $Y$. Using the well-known fact that the union of less than $\add(\mathcal{M})$ many sets with the property of Baire has the property of Baire, we conclude that $f^{-1}(\bigcup\limits_{\alpha < \kappa} B_\alpha)=\bigcup\limits_{\alpha < \kappa} f^{-1}(B_\alpha)$ has the property of Baire.

    Now we will construct $\add(\mathcal{M})$ many Polish-universally Baire sets such that their union is not Polish-universally Baire, in fact, their union will not have the property of Baire. For brevity, let $\kappa:=\add(\mathcal{M})$ and by the definition of $\kappa$ there exists meager sets $\{M_\alpha\}_{\alpha < \kappa}$ such that $\bigcup\limits_{\alpha < \kappa} M_\alpha$ is not meager. Let $B_0$ be a Borel meager set that covers $M_0$ and by transfinite recursion on $\alpha$ let $B_\alpha$ be a Borel meager set that covers the meager set $\big(\bigcup\limits_{\gamma \leq \alpha} M_\gamma \big) \cup \big(\bigcup\limits_{\delta < \alpha} B_\delta\big)$. Notice that $\bigcup\limits_{\alpha < \kappa} B_\alpha$ is not meager. 

    For every $\alpha < \kappa$ let $C_\alpha:= B_\alpha \setminus \bigcup\limits_{\gamma < \alpha} B_\gamma$. Every $C_\alpha$ is Polish-universally Baire, since they are of the form of a Borel set minus the union of less than $\kappa$ many Borel sets. By the Four Poles Theorem \cite{4P} there exists $A \s \kappa$ such that $\bigcup\limits_{\alpha \in A} C_\alpha$ does not have the property of Baire.
\end{proof}

\section{Open problems}
\label{s:open}

The most relevant open questions are whether the consistent examples demonstrating that the various definitions of universal Baireness are consistently nonequivalent can be replaced by $ZFC$ examples. Of course, Theorem \ref{t:equiv} settles this question in the case of Polish-universally Baire and compact metrizable-universally Baire, so the following six questions remain open.

\begin{question}
Is there, in $ZFC$, a Polish-universally Baire set in a Polish space that is not topological-universally Baire?
\end{question}

\begin{question}
Is there, in $ZFC$, a compact-universally Baire set in a Polish space that is not topological-universally Baire?
\end{question}

\begin{question}
Is there, in $ZFC$, a compact metrizable-universally Baire set in a Polish space that is not compact-universally Baire?
\end{question}

\begin{question}
Is there, in $ZFC$, a Polish-universally Baire set in a Polish space that is not compact-universally Baire?
\end{question}

\begin{question}
Is there, in $ZFC$, a compact-universally Baire set in a Polish space that is not Polish-universally Baire?
\end{question}

\begin{question}
Is there, in $ZFC$, a compact metrizable-universally Baire set in a Polish space that is not topological-universally Baire?
\end{question}

Two consistent examples above, namely Examples \ref{ex:S} and \ref{ex:3} both utilize the well-known fact that topological-universally Baire sets are measurable. This motivates the next question.

\begin{question}
Is there, in $ZFC$, a Polish-universally Baire set in $[0,1]$ that is not Lebesgue measurable?
\end{question}

Note that this is exactly the dual of the famous open problem whether consistently every universally measurable set has the  property of Baire, see \cite{MPW}.

\bigskip
Next we pose two problems concerning $\compact$ sets.

\begin{question}
    Is it true in ZFC that $\compact$ sets are not closed under taking Borel preimages?
\end{question}

\begin{question}
    Is a set in a Polish space $\compact$ iff it is $\{2^\kappa \mid \kappa \textrm{ is a cardinal} \}$-universally Baire? 
\end{question}

\bigskip

Our next set of questions is slightly vague. As we have argued in Section \ref{s:argument} above, the natural dual to universally measurable is Polish-universally Baire, while requiring topological-universal Baireness is much stronger.


\begin{question}
What is the measure theoretic dual to topological-universally Baire?
\end{question}

One natural guess for the above question would be of the following form: every continuous/Borel measurable preimage of $A \s X$ in any topological space $Y$ is measurable with respect to the completion of every Borel probability measure. One can easily see that the previous definition coincides with the universally measurable sets. If $f:Y \longrightarrow X$ is an arbitrary Borel function from an arbitrary topological space and $\mu$ is a Borel probability measure on $Y$, then let $\nu$ be the pushforward measure of $\mu$. Then there exist Borel sets $C,D$ such that $C \s A \s D$ and $\nu(D \setminus C)=0$. The latter implies that $f^{-1}(A)$ is measurable with respect to $\mu$ because $f^{-1}(C) \s f^{-1}(A) \s f^{-1}(D)$, the sets $f^{-1}(C), f^{-1}(D)$ are Borel and $\mu(f^{-1}(D) \setminus f^{-1}(C))=0$.

An interesting test question would be whether every set possessing this dual property has the property of Baire. A further intriguing question is whether this new dual notion could actually be equivalent to topological-universal Baireness.

\bigskip

Throughout the paper, the underlying space has always been a Polish space. 

\begin{question}
What can be said about universally Baire sets in more general topological spaces?
\end{question}

The best reference for results of this kind is \cite{Fremlin}.

\section*{Acknowledgements} 

We are grateful to Balázs Bursics, Qi Feng, Anett Imolay-Kocsis, Paul B. Larson, Menachem Magidor, Jonathan Schilhan, Philipp Schlicht, Matteo Viale and W. Hugh Woodin for numerous illuminating conversations.

\section*{Use of Artificial Intelligence} 

No AI tools were used in the discovery and development of the results of this paper.

\vspace{1cm}

\noindent
\textsc{Márton Elekes}\\

\textsc{Alfréd Rényi Institute of Mathematics, Reáltanoda u. 13-15, H-1053 Budapest, Hungary and Eötvös Loránd University, Institute of Mathematics, Pázmány Péter s. 1/c, 1117 Budapest, Hungary}\\
\textit{Email address:} \texttt{elekes.marton@renyi.hu}\\
\textit{URL:} \texttt{http://www.renyi.hu/\textasciitilde emarci}

\bigskip

\noindent
\textsc{Máté Pálfy}\\

\textsc{Alfréd Rényi Institute of Mathematics, Reáltanoda u. 13-15, H-1053 Budapest, Hungary and Eötvös Loránd University, Institute of Mathematics, Pázmány Péter s. 1/c, 1117 Budapest, Hungary}\\
\textit{Email address:} \texttt{palfymateandras@gmail.com}

\end{document}